\documentclass[10pt]{amsart}

\usepackage{amsmath, amssymb, graphicx}
\usepackage{pst-all}
\usepackage[T1]{fontenc} 

\textwidth=123mm
\textheight=202mm
\voffset=5mm

\allowdisplaybreaks



\newtheorem{coro}[equation]{Corollary}
\newtheorem{defi}[equation]{Definition}
\newtheorem{lemm}[equation]{Lemma}
\newtheorem{nota}[equation]{Notation}
\newtheorem{prop}[equation]{Proposition}
\newtheorem{exam}[equation]{Example}
\newtheorem{rema}[equation]{Remark}

\setlength{\unitlength}{1mm}
\psset{unit=1mm}
\psset{fillcolor=white}
\psset{dotsep=1.5pt}
\psset{dash=1.5pt 1.5pt}
\psset{linewidth=0.8pt}
\psset{arrowsize=3.5pt}
\psset{doublesep=1pt}
\psset{coilwidth=1.2mm}
\psset{coilheight=2}
\psset{coilaspect=20}
\psset{coilarm=2}
\pslabelsep=2pt
\newpsstyle{double}{linewidth=0.5pt, doubleline=true}
\newpsstyle{etc}{linestyle=dotted}
\newpsstyle{exist}{linestyle=dashed}
\newpsstyle{thin}{linewidth=0.5pt}
\newpsstyle{thinexist}{linewidth=0.5pt, linestyle=dashed}
\def\AArrow(#1,#2){\ncline[nodesep=0.3mm, linewidth=0.8pt, border=1.2pt]{->}{#1}{#2}}
\def\BArrow(#1,#2){\ncline[linecolor=blue, linewidth=1.2pt, linestyle=dotted, nodesep=0.3mm, border=1.2pt]{->}{#1}{#2}}
\def\CArrow(#1,#2){\ncline[linecolor=red, nodesep=0.3mm, linewidth=0.8pt, border=1.2pt]{->}{#1}{#2}}

\numberwithin{equation}{section}

\newcounter{ITEM}
\newcommand\ITEM[1]{\setcounter{ITEM}{#1}\leavevmode\hbox{\rm(\roman{ITEM})}}

\newcommand\aaa[1]{a_{#1}}
\newcommand\At{A}
\newcommand\ai[1]{121...[#1]}
\newcommand\aii[1]{212...[#1]}
\newcommand\Att{\widetilde{\VR(2.1,0)\smash{\mathrm{A}}}_2}
\newcommand\BKL[1]{B^{*+}_{#1}}
\newcommand\BP[1]{B^{\scriptscriptstyle+}_{#1}}
\newcommand\BR[1]{B_{#1}}
\newcommand\cc{c}
\newcommand\Col{C_X}
\newcommand\coll{\pi}
\newcommand\Colp{\widehat{C}_X}
\newcommand\comp{\mathbin{\vcenter{\hbox{$\scriptscriptstyle\circ$}}}}
\newcommand\DD{D}
\newcommand\Div{\mathrm{Div}}
\newcommand\dive{\preccurlyeq\nobreak}
\newcommand\divs{\prec\nobreak}
\newcommand\ee{e}
\newcommand\ev{\textsc{ev}}
\newcommand\ff{f}
\newcommand\FF{F}
\newcommand\FFF{L}

\renewcommand\ge{\geqslant}
\renewcommand\gg{g}
\newcommand\GG{G}
\newcommand\GR[2]{\langle#1\mid#2\rangle}
\newcommand\hh{h}
\newcommand\HH{H}
\newcommand\HS[1]{\hspace{#1ex}}
\newcommand\ii{i}
\newcommand\inv{^{-1}}
\newcommand\jj{j}
\newcommand\kk{k}
\renewcommand\le{\leqslant}
\newcommand\Lex{{\scriptscriptstyle\mathrm{Lex}}}
\newcommand\LG[1]{\Vert#1\Vert}
\newcommand\mm{m}
\newcommand\MM{M}
\newcommand\MON[2]{\langle#1\mid#2\rangle^{\!+}}
\newcommand\NF{\textsc{nf}}
\newcommand\NM{N}
\newcommand\NMbar{\begin{picture}(2.5,2.75)(0,0)
\put(0,0){$\NM\HS{-0.2}$}
\psline[linewidth=0.35pt](0.6,2.85)(3.2,2.85)
\end{picture}}
\newcommand\NMD{\NM^{\HS{-0.3}\Delta}}
\newcommand\NMDbar{\begin{picture}(5,3)(0,0)
\put(0,0){$\NMD\HS{-0.2}$}
\psline[linewidth=0.35pt](0.8,3.3)(4.9,3.3)
\end{picture}}
\newcommand\NMDbarr[1]{\begin{picture}(4.8,3)(0,0)
\put(0,0){$\NMD\HS{-0.2}$}
\put(2.7,-1.6){$\scriptstyle#1$}
\psline[linewidth=0.35pt](0.8,3.5)(4.9,3.5)
\end{picture}}
\newcommand\NMS{\NM^{\HS{-0.2}\SSS}}

\newcommand\NMSbarr[1]{\begin{picture}(4.8,3)(0,0)
\put(0,0){$\NMS\HS{-0.2}$}
\put(2.5,-1.2){$\scriptstyle#1$}
\psline[linewidth=0.35pt](0.8,3.2)(4.7,3.2)
\end{picture}}
\newcommand\nn{n}

\newcommand\NNNN{\mathbb{N}}
\newcommand\notdive{\mathrel{\not\dive}}
\newcommand\pdots{\HS{0.2}{\cdot}{\cdot}{\cdot}\HS{0.2}}
\newcommand\Pow[2]{#1^{[#2]}}
\newcommand\pp{p}
\newcommand\PP{P}
\newcommand\perm{\pi}
\newcommand\qq{q}
\newcommand\resp{\mbox{\it resp}.\ }
\newcommand\rr{r}
\newcommand\RR{R}
\newcommand\RRRR{\mathbb{R}}
\newcommand\sep{\HS{0.05}{\vert}\HS{0.05}}
\newcommand\sh{\mathrm{sh}}
\newcommand\sig[1]{\sigma_{\hspace{-0.2ex}#1}^{\null}}
\newcommand\sigg[2]{\sigma_{\hspace{-0.2ex}#1}^{#2}}
\newcommand\siginv[1]{\sigma_{\hspace{-0.2ex}#1}^{-1}}
\newcommand\sss{s}
\newcommand\SSS{S}
\newcommand\Sym{\mathfrak{S}}
\newcommand\ttt{t}
\newcommand\TT{T}
\newcommand\tta{\mathtt{a}}
\newcommand\ttb{\mathtt{b}}
\newcommand\ttc{\mathtt{c}}
\newcommand\ttd{\mathtt{d}}
\newcommand\ttx{\mathtt{x}}
\newcommand\tty{\mathtt{y}}
\newcommand\ttz{\mathtt{z}}
\newcommand\under{\backslash}
\newcommand\uu{u}
\def\VR(#1,#2){\vrule width0pt height#1mm depth#2mm}
\newcommand\vv{v}
\newcommand\wdots{, ...\HS{0.2},}
\newcommand\ww{w}

\newcommand\XX{X}

\newcommand\ZZZZ{\mathbb{Z}}

\title{Garside and quadratic normalisation: a survey}

\author{Patrick Dehornoy}

\address{Laboratoire de Math\'ematiques Nicolas Oresme, CNRS UMR 6139, Universit\'e de Caen, 14032 Caen cedex, France, and Institut Universitaire de France}
\email{patrick.dehornoy@unicaen.fr}
\urladdr{www.math.unicaen.fr/$\sim$dehornoy}

\keywords{normal form; normalisation; regular language; fellow traveller property; greedy decomposition; Garside family; quadratic rewriting system; braid monoids; Artin--Tits monoids; plactic monoids}

\subjclass{20F36, 20M05, 20F10, 68Q42}

\begin{document}

\maketitle

\begin{abstract}
Starting from the seminal example of the greedy normal norm in braid monoids, we analyze the mechanism of the normal form in a Garside monoid and explain how it extends to the more general framework of Garside families. Extending the viewpoint even more, we then consider general quadratic normalisation procedures and characterise Garside normalisation among them.
\end{abstract}


This text is an essentially self-contained survey of a general approach of normalisation in monoids developed in recent years in collaboration with several co-authors and building on the seminal example of the greedy normal form of braids independently introduced by S.\,Adjan~\cite{Adj} and by M.\,El-Rifai and H.\,Morton~\cite{ElM}. The main references are the book~\cite{Garside}, written with F.\,Digne, E.\,Godelle, D.\,Kram\-mer, and J.\,Michel, the recent preprint~\cite{Dip}, written with Y.\,Guiraud, and, for algorithmic aspects, the article~\cite{Dig}, written with V.\,Gebhardt.

If $\MM$ is a monoid (or a semigroup), and $\SSS$ is a generating subfamily of~$\MM$, then, by definition, every element of~$\MM$ is the evaluation of some $\SSS$-word. A \emph{normal form} for~$\MM$ with respect to~$\SSS$ is a map that assigns to each element of~$\MM$ a distinguished representative $\SSS$-word, hence a (set theoretic) section for the evaluation map from~$\SSS^*$ to~$\MM$. The interest of normal forms is obvious, since they provide a unambiguous way of specifying the elements of~$\MM$ and, from there, for working with them in practice. As can be expected, the complexity of a normal form is a significant element. It can be defined either by considering the complexity of the language of normal words (regular, algebraic, etc.), or that of the associated normalisation map, that is, the procedure that transforms an arbitrary word into an equivalent normal word (linear, polynomial, etc.).

A huge number of normal forms appear in literature, based on quite different initial approaches, and it is certainly difficult to establish nontrivial results unifying all possible types. In this text, we concentrate on some families of normal forms that turn out to be simple in terms of complexity measures, and whose main specificity is to satisfy some \emph{locality} assumptions, meaning that both the property of being normal and the procedure that transforms an arbitrary word into an equivalent normal word only involve factors of a bounded length, here factors of length two (and that is why we call them ``quadratic''). As we shall see, several well-known classes of normalisation processes enter this framework, for instance the seminal example of the greedy normal form in Artin's braid monoids~\cite{Adj,ElM,Eps} but also the normal forms in Artin--Tits monoids based on rewriting systems as in~\cite{GaussentGuiraudMalbos} or, in a very different context, the normal form in plactic monoids based on Young tableaux and the Robinson--Schenstedt algorithm~\cite{BokutChenChenLi,CainGrayMalheiro}. 

The current introductory text is organised in four sections, going from the particular to the general. In Sec.~\ref{S:NormalE}, we analyze two motivating examples of greedy normal forms, involving free abelian monoids, a toy case that already contains the main ideas, and braid monoids, a more complicated case. Extending these examples, we describe in Sec.\,\ref{S:NormalD} the mechanism of the $\Delta$-normal form in the now classical framework of Garside monoids. Next, in Sec.\,\ref{S:NormalS}, we explain how most of the results can be generalized and, at the same time, simplified, using the notion of an $\SSS$-normal form derived from a Garside family. Finally, in Sec.\,\ref{S:NormalQ}, we introduce quadratic normalisations, which provide a natural unifying framework for the normal forms we are interested in. Having defined a complexity measure called the class, we characterise Garside normalisations among quadratic normalisations, and mention (positive and negative) termination results for the rewriting systems naturally associated with quadratic normalisations.

Most proofs are omitted or only sketched. However, it turns out that some arguments, mainly in Sec.\,\ref{S:NormalD} and~\ref{S:NormalS}, are very elementary, and then we included them, hopefully making this text both more informative and thought-provoking.

Excepte in concluding remarks at the end of sections, we exclusively consider monoids. A number of results, in particular most of those involving Garside normalisation, can be extended to groups of fractions. Also, the whole approach extends to categories, viewed as monoids with a partial multiplication.

Our notation is standard. We use $\ZZZZ$ for the set of integers, $\NNNN$ for the set of nonnegative integers. If $\SSS$ is a set, we denote by~$\SSS^*$ the free monoid over~$\SSS$ and call its elements \emph{$\SSS$-words}, or simply \emph{words}. In this context, $\SSS$ is also called alphabet, and its elements letters. We write $\LG\ww$ for the length of~$\ww$, and $\Pow\SSS\pp$ for the set of all $\SSS$-words of length~$\pp$. We use $\ww \sep \ww'$ for the concatenation of two $\SSS$-words~$\ww$ and~$\ww'$. We say that $\ww'$ is a \emph{factor} of~$\ww$ if there exist~$\uu, \vv$ satisfying $\ww = \uu \sep \ww' \sep \vv$. If $\MM$ is a monoid generated by a set~$\SSS$, we say that an $\SSS$-word $\sss_1 \sep \pdots \sep \sss_\pp$ is a \emph{$\SSS$-decomposition} for an element~$\gg$ of~$\MM$ if $\gg = \sss_1 \pdots \sss_\pp$ holds in~$\MM$. 

\section{Two examples}\label{S:NormalE}

We shall describe a specific type of normal form often called the ``greedy normal form''. Before describing it in full generality, we begin here with two examples: a very simple one involving free abelian monoids, and then the seminal example of Artin's braid monoids as investigated after Garside.

\subsection{Free abelian monoids}\label{SS:Abelian}

Our first example, free abelian monoids, is particularly simple, but it is fundamental as it can serve as a model for the sequel: our goal will be to obtain for more complicated monoids counterparts of the results that are trivial here. 

Consider the free abelian monoid~$(\NNNN, +)^\nn$ with $\nn \ge 1$, simply denoted by~$\NNNN^\nn$. We shall see the elements of~$\NNNN^\nn$ as sequences of nonnegative integers indexed by $\{1\wdots \nn\}$, thus writing $\gg(\kk)$ for the $\kk$th entry of an element~$\gg$, and use a multiplicative notation: $\ff\gg = \hh$ means $\forall\kk\,(\ff(\kk) + \gg(\kk) = \hh(\kk))$. Let $\At_\nn$ be the family $\{\tta_1 \wdots \tta_\nn\}$, where $\tta_\ii$ is defined by $\tta_\ii(\kk) = 1$ for $\kk = \ii$, and $0$ otherwise. Then $\At_\nn$ is a basis of~$\NNNN^\nn$ as an abelian monoid.

It is straightforward to obtain a normal form~$\NF^\Lex$ for~$\NNNN^\nn$ with respect to~$\At_\nn$ by fixing a linear ordering on~$\At_\nn$, for instance $\tta_1 < \pdots < \tta_\nn$, and, for~$\gg$ in~$\NNNN^\nn$, defining $\NF^\Lex(\gg)$ to be the lexicographically smallest word representing~$\gg$.

We shall be interested here in another normal form, connected with another generating family. In this basic example, the construction may seem uselessly intricate, but we shall see that it nicely extends to less trivial cases, which is not the case of the above lexicographical normal form. Let us put
\begin{equation}\label{E:AbelianCube}
\SSS_\nn:= \{ \sss \in \NNNN^\nn \mid \sss(\kk) \in \{0,1\} \text{\ for $\kk = 1 \wdots \nn$}\}.
\end{equation}
For $\ff, \gg$ in~$\NNNN^\nn$, define $\ff \dive \gg$ to mean $\forall\kk\,(\ff(\kk) \le \gg(\kk))$, and write $\ff \divs \gg$ for $\ff \dive \gg$ with $\ff \not= \gg$. The relation~$\dive$ is a partial order, connected with the operation of~$\NNNN^\nn$, since $\ff \dive \gg$ is equivalent to $\exists\gg'{\in}\NNNN^\nn\, (\ff\gg' = \gg)$, that is, $\ff$ \emph{div\-ides}~$\gg$ in~$\NNNN^\nn$. Then $\SSS_\nn$ consists of the $2^\nn$~divisors of the element~$\Delta_\nn$ defined by 
\begin{equation}\label{E:AbelianDelta}
\Delta_\nn(\kk) := 1 \text{\quad for $\kk = 1 \wdots \nn$}.
\end{equation}

We recall that, if $\MM$ is a (left-cancellative) monoid generated by a family~$\SSS$, the \emph{Cayley graph of~$\MM$ with respect to~$\SSS$} is the $\SSS$-labeled oriented graph with vertex set~$\MM$ and, for~$\gg, \hh$ in~$\MM$ and $\sss$ in~$\SSS$, there is an $\sss$-labeled edge from~$\gg$ to~$\hh$ if, and only if, $\gg \sss = \hh$ holds in~$\MM$. Then, the Cayley graph of~$\NNNN^\nn$ with respect to~$\At_\nn$ is an $\nn$-dimensional grid, and $\SSS_\nn$ corresponds to the $\nn$-dimensional cube that is the elementary tile of the grid, see Fig.\,\ref{F:Abelian}. 

\begin{prop}\label{P:AbelianNormal}
Every element of~$\NNNN^\nn$ admits a unique decomposition of the form $\sss_1 \sep \pdots \sep \sss_\pp$ with $\sss_1 \wdots \sss_\pp$ in~$\SSS_\nn$ satisfying $\sss_\pp \not= 1$, and, for every~$\ii <\pp$,
\begin{equation}\label{E:AbelianNormal}
\forall \sss {\in}Ê\SSS_\nn \, (\, \sss_\ii \divs \sss \ \Rightarrow\ \sss \notdive \sss_\ii\sss_{\ii+1}\pdots \sss_\pp\, ).
\end{equation}
\end{prop}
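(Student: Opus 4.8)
The plan is to single out, for each $\gg$ in $\NNNN^\nn$, its \emph{head} $\alpha(\gg)$, the $\dive$-greatest element of~$\SSS_\nn$ dividing~$\gg$, and to show that the decomposition sought is exactly the \emph{greedy} one obtained by repeatedly extracting the head. The first point is that such a greatest divisor exists and is explicit: since $\dive$ is the product of the natural orderings of the factors~$\NNNN$, the monoid $\NNNN^\nn$ is a lattice for~$\dive$, with join taken coordinatewise. For fixed~$\gg$, the set of elements of~$\SSS_\nn$ dividing~$\gg$ is nonempty (it contains~$1$) and closed under join, because the join of two $\{0,1\}$-valued sequences is again $\{0,1\}$-valued and the join of two divisors of~$\gg$ again divides~$\gg$; hence it has a greatest element. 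Concretely one checks $\alpha(\gg)(\kk) = \min(\gg(\kk), 1)$, and the property I shall use repeatedly is
\begin{equation*}
\sss \dive \gg \ \Longleftrightarrow\ \sss \dive \alpha(\gg) \qquad\text{for } \sss \in \SSS_\nn,
\end{equation*}
which is immediate from the fact that $\sss(\kk) \le \gg(\kk)$ and $\sss(\kk) \le 1$ together mean $\sss(\kk) \le \min(\gg(\kk),1)$.

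For existence I would set up a recursion: for $\gg \not= 1$, put $\sss_1 := \alpha(\gg)$, which is $\not= 1$, use cancellativity of~$\NNNN^\nn$ to write $\gg = \sss_1 \gg'$ with $\gg'(\kk) = \max(\gg(\kk)-1,0)$, and iterate on~$\gg'$. This stops because $\sum_\kk \gg'(\kk) < \sum_\kk \gg(\kk)$, producing $\sss_1 \sep \pdots \sep \sss_\pp$ with $\sss_\pp \not= 1$ (and the empty word when $\gg = 1$). To see that this decomposition satisfies~\eqref{E:AbelianNormal}, one notes that, by construction, $\sss_\ii = \alpha(\sss_\ii \sss_{\ii+1} \pdots \sss_\pp)$ for every~$\ii$; so if $\sss$ in~$\SSS_\nn$ divides $\sss_\ii \pdots \sss_\pp$, the displayed equivalence gives $\sss \dive \sss_\ii$, whence $\sss_\ii \divs \sss$ is impossible by antisymmetry. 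This is precisely the contrapositive of~\eqref{E:AbelianNormal}.

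For uniqueness, the crux is to show that the conditions of the statement force $\sss_1 = \alpha(\gg)$, after which one cancels~$\sss_1$ and argues by induction on $\sum_\kk \gg(\kk)$, the tail $\sss_2 \sep \pdots \sep \sss_\pp$ being a decomposition of~$\gg'$ of the same kind. So let $\sss_1 \sep \pdots \sep \sss_\pp$ satisfy the conditions. From $\sss_1 \dive \gg$ and the equivalence above we get $\sss_1 \dive \alpha(\gg)$. Conversely $\alpha(\gg) \dive \gg = \sss_1 \pdots \sss_\pp$; if $\pp = 1$ then $\gg = \sss_1 \in \SSS_\nn$ forces $\alpha(\gg) = \sss_1$, while if $\pp \ge 2$ and $\sss_1 \not= \alpha(\gg)$ then $\sss_1 \divs \alpha(\gg)$, so~\eqref{E:AbelianNormal} at $\ii = 1$ would yield $\alpha(\gg) \notdive \gg$, a contradiction. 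Hence $\sss_1 = \alpha(\gg)$ in all cases, and uniqueness follows by induction.

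The only genuine idea is the closure-under-join observation yielding a \emph{unique} maximal simple divisor, i.e.\ that $\SSS_\nn$ is the interval $[1, \Delta_\nn]$ in the lattice~$\NNNN^\nn$; I expect this to be the sole real step, everything else reducing to cancellativity and an induction on the total size. A minor point to handle cleanly is that the conditions of the statement, together with $\sss_\pp \not= 1$, already force every~$\sss_\ii$ to be $\not= 1$ (otherwise $\sss_{\ii+1}$ would violate~\eqref{E:AbelianNormal} at~$\ii$), which guarantees that the tail used in the induction is again an admissible decomposition.
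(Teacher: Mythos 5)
Your proposal is correct and takes essentially the same route as the paper: your head $\alpha(\gg)$ is precisely the gcd of~$\gg$ and~$\Delta_\nn$ that the paper extracts greedily, and your closure-under-join observation is exactly the content of Lemma~\ref{L:AbelianLattice} (that $\dive$ is a lattice order with $\SSS_\nn$ the sublattice of divisors of~$\Delta_\nn$). You simply spell out the uniqueness induction, which the paper leaves implicit.
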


Condition~\eqref{E:AbelianNormal} is a maximality statement. It says that $\sss_1$ contains as much of~$\gg$ as possible in order to remain in~$\SSS_\nn$ and that, for every~$\ii$, the entry~$\sss_\ii$ similarly contains as much of the right chunk $\sss_\ii \pdots \sss_\pp$ as possible to remain in~$\SSS_\nn$: when we try to replace~$\sss_\ii$ with a larger element~$\sss$ of~$\SSS_\nn$, then we quit the divisors of~$\sss_\ii \pdots \sss_\pp$. This should make it clear why the decomposition $\sss_1 \sep \pdots \sep \sss_\pp$ of~$\gg$ is usually called \emph{greedy}. 

\begin{exam}\label{X:Abelian}
\rm Assume $\nn = 3$ and consider $\gg = \tta^3 \ttb \ttc^2$, that is, $\gg = (3,1,2)$ (we write $\tta, \ttb, \ttc$ for $\tta_1, \tta_2, \tta_3$). The maximal element of~$\SSS_3$ that divides~$\gg$ is~$\Delta_3$, with $\gg = \Delta_3 \cdot \tta^2 \ttc$. Then, the maximal element of~$\SSS_3$ that divides~$\tta^2 \ttc$ is~$\tta \ttc$, with $\tta^2 \ttc = \tta \ttc \cdot \tta$. The latter element left-divides~$\Delta_3$. So the greedy decomposition of~$\gg$ as provided by Prop.\,\ref{P:AbelianNormal} is the length-three $\SSS_3$-word $\Delta_3 \sep \tta \ttc \sep \tta$, see Fig.\,\ref{F:Abelian}.
\end{exam}

\begin{figure}[htb]\centering
\begin{picture}(81,37)(0,-1)
\put(0,0){\includegraphics[scale=0.4]{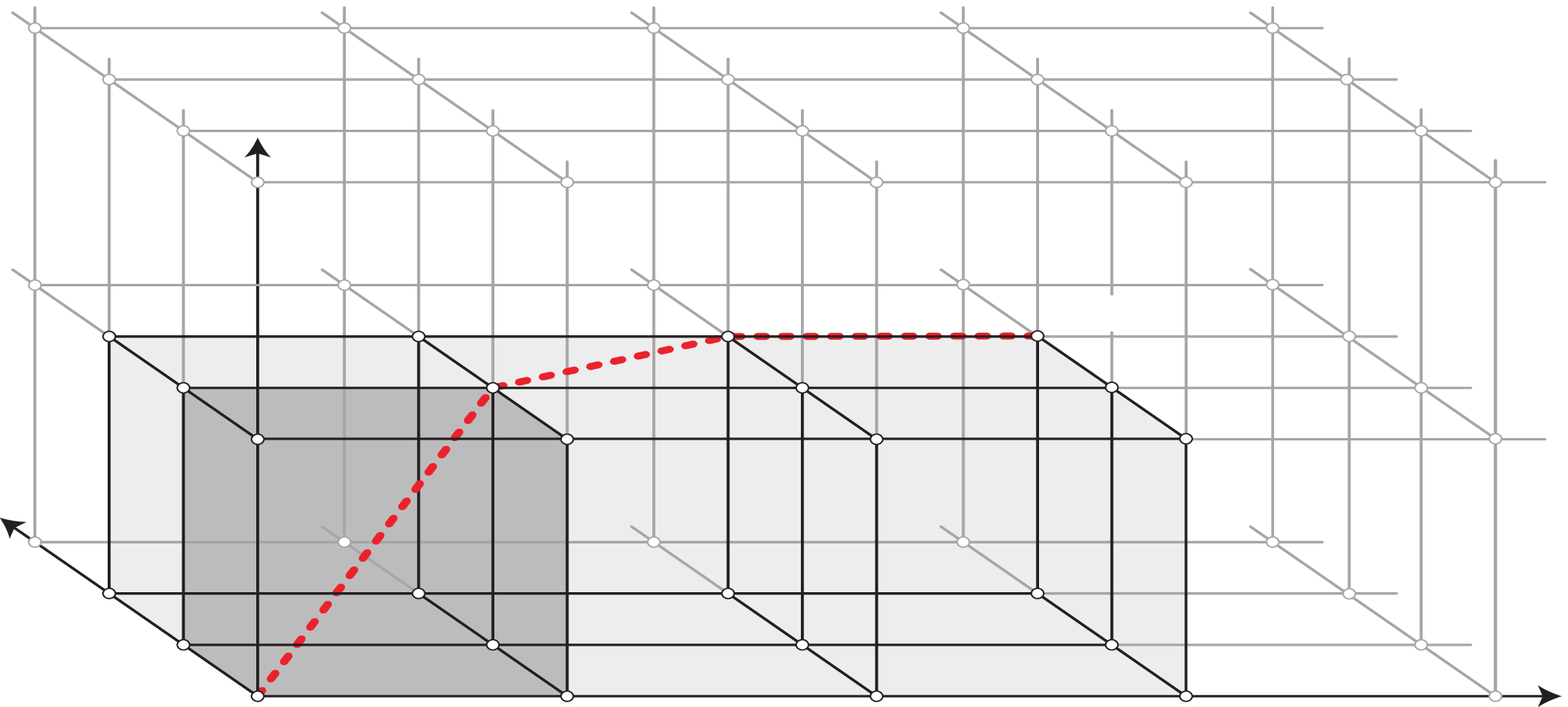}}
\put(11.5,-1.5){$1$}
\put(30,-1.5){$\tta$}
\put(7.5,18.5){$\ttb$}
\put(7,2){$\ttc$}
\put(25,18.5){$\Delta$}
\put(54,20){$\tta^3 \ttb \ttc^2$}
\end{picture}
\caption{The Cayley graph of $\NNNN^\nn$ with respect to~$\At_\nn$, here for $\nn = 3$; we write $\tta, \ttb, \ttc$ for $\tta_1, \tta_2, \tta_3$, and $\Delta$ for~$\Delta_3$. The dark grey cube corresponds to the $8$ elements of~$\SSS_3$. Then, the greedy decomposition of $\tta^3 \ttb \ttc^2$ corresponds to the dashed path: among the many possible ways of going from~$1$ to~$\tta^3 \ttb \ttc^2$, we choose at each step the largest possible element of~$\SSS_3$ that divides the considered element, thus remaining in the light grey domain, which corresponds to the divisors of $\tta^3 \ttb \ttc^2$.}
\label{F:Abelian}
\end{figure}

Prop.\,\ref{P:AbelianNormal} is easy. It can be derived from the following (obvious) observation: 

\begin{lemm}\label{L:AbelianLattice}
For every~$\nn$, the divisibility relation of~$\NNNN^\nn$ is a lattice order, and $\SSS_\nn$ is a finite sublattice formed by the divisors of~$\Delta_\nn$, which are $2^\nn$ in number.
\end{lemm}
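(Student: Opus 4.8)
The plan is to prove each of the three assertions in turn, all of which reduce to elementary properties of the componentwise order on~$\NNNN^\nn$.

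First I would verify that $\dive$ is a lattice order. Given $\ff, \gg$ in~$\NNNN^\nn$, define $\hh$ and~$\hh'$ by $\hh(\kk) := \min(\ff(\kk), \gg(\kk))$ and $\hh'(\kk) := \max(\ff(\kk), \gg(\kk))$ for each~$\kk$. Since $\min$ and $\max$ are the meet and join in the lattice~$(\NNNN, \le)$, a direct componentwise check shows that $\hh$ is the greatest lower bound and $\hh'$ the least upper bound of~$\{\ff, \gg\}$ for the order~$\dive$; indeed $\ee \dive \ff$ and $\ee \dive \gg$ hold if and only if $\ee(\kk) \le \min(\ff(\kk), \gg(\kk))$ for every~$\kk$, which says exactly $\ee \dive \hh$, and symmetrically for the join. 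Hence $\dive$ is a lattice order, with meet and join computed componentwise.

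Next I would check that $\SSS_\nn$ consists precisely of the divisors of~$\Delta_\nn$. By~\eqref{E:AbelianDelta} we have $\Delta_\nn(\kk) = 1$ for all~$\kk$, so $\sss \dive \Delta_\nn$ means $\sss(\kk) \le 1$, that is, $\sss(\kk) \in \{0, 1\}$ for every~$\kk$; comparing with~\eqref{E:AbelianCube}, this is exactly the defining condition for membership in~$\SSS_\nn$. So $\SSS_\nn$ is the set of divisors of~$\Delta_\nn$. That this set has $2^\nn$ elements is immediate, as an element of~$\SSS_\nn$ is determined by an arbitrary choice of value in~$\{0,1\}$ in each of the $\nn$~coordinates, giving $2^\nn$ independent binary choices.

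Finally I would confirm that $\SSS_\nn$ is a sublattice, that is, closed under the meet and join of~$\NNNN^\nn$. If $\sss, \ttt$ lie in~$\SSS_\nn$, then all their entries lie in~$\{0,1\}$, hence so do the entries of their componentwise $\min$ and $\max$; by the first step these are the meet and join, so both remain in~$\SSS_\nn$. Alternatively, since $\SSS_\nn$ is the set of divisors of the single element~$\Delta_\nn$, closure is automatic: $\sss \dive \Delta_\nn$ and $\ttt \dive \Delta_\nn$ force $\sss \vee \ttt \dive \Delta_\nn$ and a fortiori $\sss \wedge \ttt \dive \Delta_\nn$. I expect no genuine obstacle here; the only point requiring any care is making the componentwise description of meet and join explicit at the start, since everything else follows from it mechanically.
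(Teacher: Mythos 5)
Your proof is correct and is exactly the routine componentwise verification the paper has in mind: the paper states this lemma as an ``(obvious) observation'' and gives no proof, having already noted that $\ff \dive \gg$ amounts to $\forall\kk\,(\ff(\kk) \le \gg(\kk))$. Your three steps --- meet and join as componentwise $\min$ and $\max$, the identification of~$\SSS_\nn$ with the divisors of~$\Delta_\nn$ together with the $2^\nn$ count, and closure of~$\SSS_\nn$ under meet and join --- fill in that omitted argument completely and correctly.
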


We recall that a \emph{lattice order} is a partial order in which every pair of elements admits a greatest lower bound and a lowest upper bound. When considering a divisibility relation, it is natural to use ``least common multiple'' (\emph{lcm}) and ``greatest common divisor'' (\emph{gcd}) for the least upper and greatest lower bounds. 

Once Lemma~\ref{L:AbelianLattice} is available, Prop.\,\ref{P:AbelianNormal} easily follows: indeed, starting from~$\gg$, if $\gg$ is not~$1$, there exists a maximal element~$\sss_1$ of~$\SSS_\nn$ dividing~$\gg$, namely the left-gcd of~$\gg$ and~$\Delta_\nn$. So there exists~$\gg'$ satisfying $\gg = \sss_1 \gg'$. If $\gg'$ is not~$1$, we repeat with~$\gg'$, finding a maximal element~$\sss_2$ of~$\SSS_\nn$ dividing~$\gg'$, etc. The sequence $\sss_1 \sep \sss_2 \sep \pdots$ so obtained then satisfies the maximality condition of~\eqref{E:AbelianNormal}.

\begin{rema}
\rm Let $\ZZZZ^\nn$ be the rank~$\nn$ free abelian group. Then $\ZZZZ^\nn$ is a group of (left) fractions for the monoid~$\NNNN^\nn$, meaning that every element of~$\ZZZZ^\nn$ admits an expression $\ff\inv\gg$ with $\ff, \gg$ in~$\NNNN^\nn$. It is easy to extend the greedy normal form of Prop.\,\ref{P:AbelianNormal} into a unique normal form on the group~$\ZZZZ^\nn$: indeed, every element of~$\ZZZZ^\nn$ can be expressed as $\Delta_\nn^\mm \gg$ with $\mm$ in~$\ZZZZ$ and~$\gg$ in~$\NNNN^\nn$, hence it admits a decomposition $\Delta_\nn^\mm \sep \sss_1 \sep \pdots \sep \sss_\pp$ with $\sss_1 \wdots \sss_\pp$ in~$\SSS_\nn$ satisfying~\eqref{E:AbelianNormal}. The latter is not unique in general, but it is if, in addition, one requires $\sss_1 \not= \Delta_\nn$.
\end{rema}

\subsection{Braid monoids}\label{SS:Braids}

Much less trivial, our second example involves braid monoids as investigated by F.A.\,Garside in~\cite{Gar}. For our current purpose, it is convenient to start with a presentation of the braid monoid~$\BP\nn$, namely
\begin{equation}\label{E:BraidPresent}
\BP\nn = \bigg\langle \sig1 \wdots \sig{n-1} \ \bigg\vert\ 
\begin{matrix}
\sig\ii \sig j = \sig j \sig\ii 
&\text{for} &\vert i-j \vert\ge 2\\
\sig\ii \sig j \sig\ii = \sig j \sig\ii \sig j \ 
&\text{for} &\vert i-j \vert = 1
\end{matrix}
\ \smash{\bigg\rangle^{\!\!+}}.
\end{equation}
The braid group~$\BR\nn$ is the group which, as a group, admits the presentation~\eqref{E:BraidPresent}. As shown by E.\,Artin in~\cite{Art} (see, for instance, \cite{Bir} or \cite{Dhr}), $\BR\nn$ interprets as the group of isotopy classes of $\nn$-strand braid diagrams, which are planar diagrams obtained by concatenating diagrams of the type
$$\begin{picture}(42,19)(-5,1)
\psline[linewidth=0.8mm]{c-c}(0,0)(0,6)
\psline[linewidth=0.8mm]{c-c}(12,0)(12,6)
\psline[linewidth=0.8mm]{c-c}(18,6)(24,0)
\psline[linewidth=0.8mm,border=2pt]{c-c}(24,6)(18,0)
\psline[linewidth=0.8mm]{c-c}(30,0)(30,6)
\psline[linewidth=0.8mm]{c-c}(42,0)(42,6)
\psline[linewidth=0.8mm]{c-c}(0,10)(0,16)
\psline[linewidth=0.8mm]{c-c}(12,10)(12,16)
\psline[linewidth=0.8mm]{c-c}(24,16)(18,10)
\psline[linewidth=0.8mm,border=2pt]{c-c}(18,16)(24,10)
\psline[linewidth=0.8mm]{c-c}(30,10)(30,16)
\psline[linewidth=0.8mm]{c-c}(42,10)(42,16)
\put(-12,12){$\sig\ii\,:$}
\put(-24,3){and\quad $\siginv\ii\,:$}
\put(-1,18){$1$}
\put(17,18){$\ii$}
\put(22,18){$\ii{+}1$}
\put(41,18){$\nn$}
\put(4.5,2.5){$\pdots$}
\put(34.5,2.5){$\pdots$}
\put(4.5,12.5){$\pdots$}
\put(34.5,12.5){$\pdots$}
\end{picture}$$
with $1 \le \ii \le \nn-1$. When an $\nn$-strand braid diagram is viewed as the projection of $\nn$~nonintersecting curves in a cylinder~$\DD^2 \times \RRRR$ as in Fig.\,\ref{F:Braid}, the relations of~\eqref{E:BraidPresent} correspond to the natural notion of a deformation, or \emph{ambient isotopy}. Then, the monoid~$\BP\nn$ corresponds to isotopy classes of positive braid diagrams, meaning those diagrams in which all crossings have the orientation of~$\sig\ii$.

\begin{figure}[htb]\centering
\begin{picture}(112,27)(0,1)
\put(0,0){\includegraphics[scale=0.6]{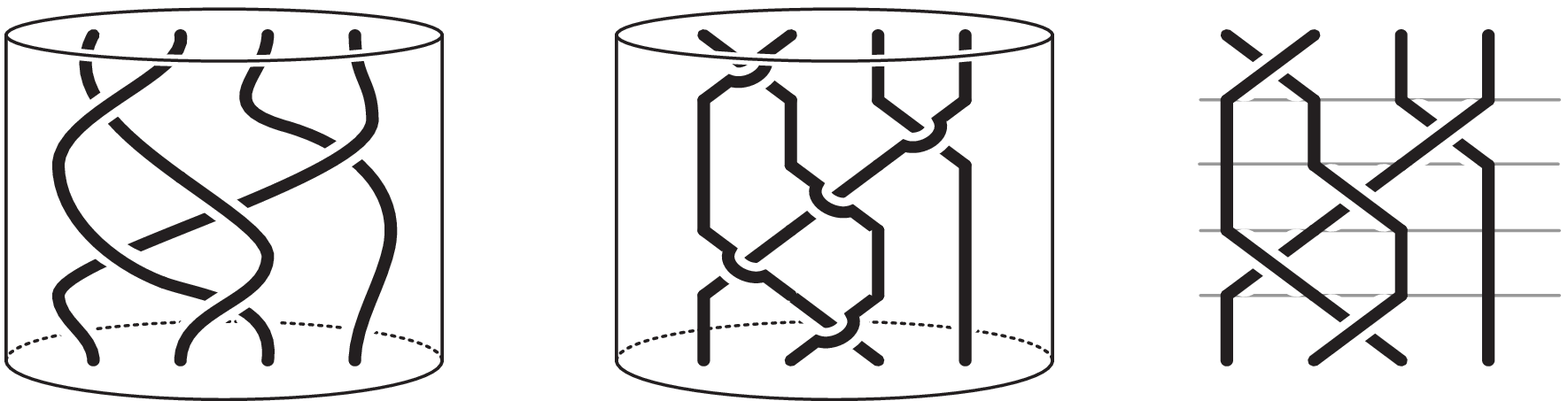}}
\put(108,23){$\sig1$}
\put(108,18.5){$\sig3$}
\put(108,14){$\siginv2$}
\put(108,9.5){$\siginv1$}
\put(108,5){$\sig2$}
\end{picture}
\caption{Viewing an $\nn$-strand braid diagram (here $\nn = 4$) as the plane projection of a 3D-figure in a cylinder; on the right, by decomposing the diagram into elementary diagrams involving only one crossing, with two possible orientations, one obtains an encoding of an $\nn$-braid diagram by a word in the alphabet $\{\sigg1{\pm1} \wdots \sigg{\nn{-}1}{\pm1}\}$.}
\label{F:Braid}
\end{figure}

Defining unique normal forms for the elements of the monoid~$\BP\nn$ (called \emph{positive $\nn$-strand braids}) is both easy and difficult. Indeed, by definition, every positive $\nn$-strand admits decompositions in terms of the letters~$\sig1 \wdots \sig{\nn{-}1}$, and, as in Subsec.\,\ref{SS:Abelian}, we obtain a distinguished expression by considering the lexicographically smallest expression. This, however, is \emph{not} a good idea: the normal form so obtained is almost useless (nevertheless, see \cite{AlNa}, building on unpublished work by Bronfman, for an application in combinatorics), mainly because there is no simple rule for obtaining the normal form of~$\sig\ii \gg$ or $\gg\sig\ii$ from that of~$\sig\ii$. In other words, one cannot \emph{compute} the normal form easily.

A much better normal form can be obtained as follows. For each~$\nn$, let $\Delta_\nn$ be the positive $\nn$-strand braid inductively defined by
\begin{equation}
\label{E:Delta}
\Delta_1 := 1, \quad \Delta_\nn := \Delta_{\nn-1} \, \sig{\nn-1} \pdots \sig2 \sig1 
\mbox{\quad for $\nn \ge 2$}, 
\end{equation}
corresponding to a (positive) half-turn of the whole family of $\nn$~strands:
$$\begin{picture}(98,18)(0,1)
\psset{xunit=1mm}\psset{yunit=0.53mm}
\pscircle[fillstyle=solid,fillcolor=black](0,18){0.5}
\psline[linewidth=0.8mm]{c-c}(20,21)(26,15)
\psline[linewidth=0.8mm,border=2pt]{c-c}(26,21)(20,15)
\psline[linewidth=0.8mm]{c-c}(46,27)(58,15)(58,9)
\psline[linewidth=0.8mm,border=2pt]{c-c}(52,27)(46,21)(46,15)(52,9)
\psline[linewidth=0.8mm,border=2pt]{c-c}(58,27)(58,21)(46,9)
\psline[linewidth=0.8mm]{c-c}(78,36)(90,24)(90,18)(96,12)(96,0)
\psline[linewidth=0.8mm,border=2pt]{c-c}(84,36)(78,30)(78,24)(84,18)(84,12)(90,6)(90,0)
\psline[linewidth=0.8mm,border=2pt]{c-c}(90,36)(90,30)(78,18)(78,6)(84,0)
\psline[linewidth=0.8mm,border=2pt]{c-c}(96,36)(96,18)(78,0)
\put(2,10){$\Delta_1$}
\put(28,10){$\Delta_2$}
\put(60,13){$\Delta_3$}
\put(98,16){$\Delta_4$}
\end{picture}$$ 
Next, let us call a positive braid \emph{simple} if it can be represented by a positive diagram in which any two strands cross at most once. One shows that the latter property does not depend on the choice of the diagram. By definition, the trivial braid~$1$, and every braid~$\sig\ii$ is simple. We see above that $\Delta_\nn$ is also simple. Let~$\SSS_\nn$ be the family of all simple $\nn$-strand braids.

As in Subsec.\,\ref{SS:Abelian}, let $\dive$ be the \emph{left}-divisibility relation of the monoid~$\BP\nn$: so $\ff \dive \gg$ holds if, and only if, there exists~$\gg'$ (in~$\BP\nn$) satisfying $\ff\gg' = \gg$. For $\nn \ge 3$, the monoid~$\BP\nn$ is not abelian, so left-divisibility does not coincide in general with right-divisibility, defined symmetrically by $\exists\gg'\,(\gg'\ff = \gg)$.

Then, we have the following counterpart of Lemma~\ref{L:AbelianLattice}:

\begin{lemm}{\rm\cite{Gar}}\label{L:BraidLattice}
For every~$\nn$, the left-divisibility relation of the mon\-oid~$\BP\nn$ is a lattice order, and $\SSS_\nn$ is a finite sublattice formed by the left-divisors of~$\Delta_\nn$, which are $\nn!$ in number.
\end{lemm}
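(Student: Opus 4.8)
The statement splits into three tasks: showing that left-divisibility makes $\BP\nn$ a lattice, identifying $\SSS_\nn$ with the set of left-divisors of $\Delta_\nn$, and counting the latter. The plan is to isolate the one genuinely hard ingredient and then derive everything else formally. As preliminaries, I would first note that both relations in~\eqref{E:BraidPresent} preserve word length, so word length descends to a homomorphism $\BP\nn \to (\NNNN,+)$, $\gg \mapsto \LG\gg$, with $\LG{\ff\gg} = \LG\ff + \LG\gg$. Consequently $1$ is the only invertible element and, granting left-cancellativity, $\dive$ is antisymmetric, hence a partial order whose strict part $\divs$ strictly increases length and therefore admits no infinite descending chains.

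The crux is to prove that $\BP\nn$ is left- and right-cancellative and that any two elements having a common right-multiple have a least one (a right-lcm). For two atoms this is read directly off the presentation: the right-lcm of $\sig\ii$ and $\sig\jj$ is $\sig\ii\sig\jj$ when $\vert\ii-\jj\vert\ge2$ and $\sig\ii\sig\jj\sig\ii$ when $\vert\ii-\jj\vert=1$. Propagating this to arbitrary pairs, simultaneously with cancellativity, is exactly what the word-reversing method accomplishes: one checks that the presentation~\eqref{E:BraidPresent} is complete for reversing (equivalently, that the relevant cube condition holds), which yields cancellativity and conditional right-lcms in one stroke. This is the step I expect to be the real work; Garside's original combinatorial argument and the reversing argument are two routes through it.

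Once conditional right-lcms and cancellativity are available, I would establish that common right-multiples always exist (Ore's condition). The key facts are that each atom $\sig\ii$ left-divides $\Delta_\nn$ and that conjugation by $\Delta_\nn$ induces the flip automorphism $\phi$ (with $\phi(\sig\ii) = \sig{\nn-\ii}$), so $\sss\Delta_\nn = \Delta_\nn\,\phi(\sss)$. From $\sig\ii \dive \Delta_\nn$ and $\sss \dive \Delta_\nn$ one gets $\sig\ii\sss \dive \Delta_\nn^{\,2}$ (cancel $\sig\ii$ and use $\sss \dive \Delta_\nn \dive \Delta_\nn\phi(t) = t\Delta_\nn$), and by induction every element of length $\ell$ left-divides $\Delta_\nn^{\,\ell}$; hence any two elements have a common right-multiple and therefore a right-lcm. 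Left-gcds then come for free: the common left-divisors of $\ff,\gg$ have bounded length, and the right-lcm of this finite set is again a common left-divisor, namely the greatest one. Thus $(\BP\nn,\dive)$ is a lattice.

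For the last part I would use the permutation homomorphism $\BP\nn \to \Sym_\nn$ sending $\sig\ii$ to the transposition $(\ii,\ii{+}1)$. A positive braid is simple exactly when its length equals the Coxeter length of its image permutation, that is, when it is the positive lift of a reduced word; by Tits' theorem any two reduced words for a permutation differ only by the braid relations of~\eqref{E:BraidPresent}, so this lift is well defined and gives a bijection $\Sym_\nn \to \SSS_\nn$, whence $\vert\SSS_\nn\vert = \nn!$. Since $\Delta_\nn$ is the lift of the longest permutation and right weak order on $\Sym_\nn$ matches left-divisibility of the corresponding lifts, the left-divisors of $\Delta_\nn$ are precisely the simple braids. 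Finally $\SSS_\nn$ is a sublattice: left-divisors of a fixed element are trivially closed under left-gcd, and closed under right-lcm because a right-lcm divides every common right-multiple, while $\Delta_\nn$ is itself a common right-multiple of any two of its divisors.
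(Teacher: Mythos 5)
The paper does not actually prove Lemma~\ref{L:BraidLattice}: it cites Garside, remarks that the proof is far from trivial, and only sketches why $\nn!$ appears, via the bijection between simple braids and permutations. Your proposal supplies the missing architecture along the standard modern route, and that architecture is correct: homogeneity of~\eqref{E:BraidPresent} gives the length homomorphism, triviality of invertibles, and the noetherianity needed later; the identity $\sss\Delta_\nn = \Delta_\nn\phi(\sss)$ yields $\gg \dive \Delta_\nn^{\LG\gg}$ by induction on~$\LG\gg$, hence Ore's condition, upgrading conditional right-lcms to unconditional ones; left-gcds then follow by taking the right-lcm of the finitely many common left-divisors (finiteness again from homogeneity); and Tits' theorem makes the positive lift $\Sym_\nn \to \SSS_\nn$ well defined, identifies left-divisibility of lifts with weak order below the longest element of~$\Sym_\nn$, and so gives at once the identification of~$\SSS_\nn$ with the left-divisors of~$\Delta_\nn$, the count~$\nn!$, and (together with the observation that $\Delta_\nn$ is a common right-multiple of its divisors) the sublattice property. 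Your characterisation of simplicity --- braid length equals Coxeter length of the image permutation --- is also correct, via the parity count of crossings per pair of strands, and it neatly disposes of the diagram-independence issue the paper mentions in passing.

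The one caveat is the step you yourself flag: verifying that the presentation~\eqref{E:BraidPresent} is complete for word reversing (the cube condition), or running Garside's original induction on positive words instead, is the entire analytic content of the lemma. As written, left- and right-cancellativity and the \emph{leastness} of your atom lcms $\sig\ii\sig\jj$ and $\sig\ii\sig\jj\sig\ii$ are named, not proved, so your text is a correct reduction plus a program for the hard kernel --- which is, to be fair, exactly the step the survey also outsources to~\cite{Gar}. Two micro-remarks: antisymmetry of~$\dive$ needs only homogeneity (both complements have length zero, hence equal~$1$), so cancellativity is not required there; and the flip identity $\phi(\sig\ii) = \sig{\nn-\ii}$ with $\sss\Delta_\nn = \Delta_\nn\phi(\sss)$ deserves its own short induction on the recursive definition~\eqref{E:Delta} of~$\Delta_\nn$, routine but not free.
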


Contrasting with Lemma~\ref{L:AbelianLattice}, the proof of Lemma~\ref{L:BraidLattice} is far from trivial. Why $\nn!$ appears here is easy to explain. Every $\nn$-strand braid~$\gg$ induces a well-defined permutation~$\perm(\gg)$ of~$\{1 \wdots \nn\}$, where $\perm(\gg)(\ii)$ is the initial position of the strand that finishes at position~$\ii$ in any diagram representing~$\gg$. In this way, one obtains a surjective homomorphism from~$\BR\nn$ to the symmetric group~$\Sym_\nn$. It turns out that, for every permutation~$\ff$ of~$\{1 \wdots \nn\}$, there exists exactly one simple $\nn$-strand braid whose permutation is~$\ff$: so simple braids (also called ``permutation braids'') provide a (set-theoretic) section for the projection of~$\BR\nn$ to~$\Sym_\nn$, and they are $\nn!$ in number, see Fig.\,\ref{F:BraidLattice}.

\begin{figure}[htb]
$$\begin{picture}(60,43)(0,2)
\put(0,0){\includegraphics[scale=0.35]{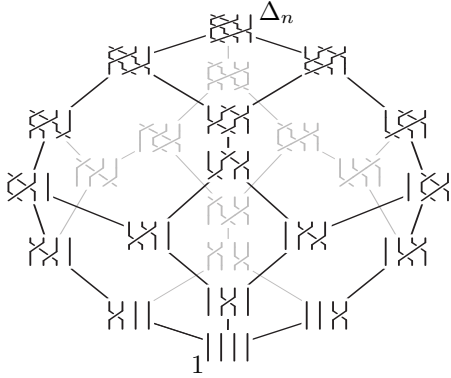}}
\put(25,-1){$1$}
\put(34,46){$\Delta_\nn$}
\end{picture}$$
\caption{The lattice $(\Div(\Delta_\nn), \dive)$ formed by the $\nn!$ left-divisors of the braid~$\Delta_\nn$ in the monoid~$\BP\nn$, here in the case $\nn = 4$: a copy of the $\nn$-permutahedron associated with the symmetric group~$\Sym_\nn$ equipped with what is called the weak order, see for instance~\cite{BjBr}; topologically, this is an $\nn{-}2$-sphere tessellated by hexagons and squares which correspond to the relations of~\eqref{E:BraidPresent}.}
\label{F:BraidLattice}
\end{figure}

Once Lemma~\ref{L:BraidLattice} is available, repeating the argument of Subsec.\,\ref{SS:Abelian} (with some care) easily leads to 

\begin{prop} {\rm\cite{Adj,ElM}}\label{P:BraidNormal}
Every element of~$\BP\nn$ admits a unique decomposition of the form $\sss_1 \sep \pdots \sep \sss_\pp$ with $\sss_1 \wdots \sss_\pp$ in~$\SSS_\nn$ satisfying $\sss_\pp \not= 1$, and, for every~$\ii <\pp$,
\begin{equation}\label{E:BraidNormal}
\forall \sss {\in}Ê\SSS_\nn \, (\, \sss_\ii \divs \sss \ \Rightarrow\ \sss \notdive \sss_\ii\sss_{\ii+1}\pdots \sss_\pp\, ).
\end{equation}
\end{prop}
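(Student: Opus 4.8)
The plan is to copy, step for step, the greedy extraction used for Proposition~\ref{P:AbelianNormal}, substituting Lemma~\ref{L:BraidLattice} for Lemma~\ref{L:AbelianLattice}. Two external facts are needed: that $\BP\nn$ is left-cancellative (part of Garside's analysis, which I take for granted), and that the common length of all words representing a given element is well defined---which holds because the relations of~\eqref{E:BraidPresent} are length-preserving; I write $\LG\gg$ for this length. The pivotal remark is that, since by Lemma~\ref{L:BraidLattice} the family $\SSS_\nn$ is exactly the set of left-divisors of~$\Delta_\nn$, a simple braid left-divides~$\gg$ if and only if it is a common left-divisor of~$\gg$ and~$\Delta_\nn$. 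Hence the set $\{\sss \in \SSS_\nn \mid \sss \dive \gg\}$ of simple left-divisors of~$\gg$ has a greatest element for~$\dive$, namely the left-gcd $\gg \wedge \Delta_\nn$ (the meet, which exists because $\dive$ is a lattice order); call it $\sss(\gg)$.

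For existence, set $\gg_0 := \gg$ and, recursively, $\sss_\ii := \sss(\gg_{\ii-1})$, so that $\gg_{\ii-1} = \sss_\ii\,\gg_\ii$ for a unique~$\gg_\ii$ by left-cancellation. As long as $\gg_{\ii-1} \neq 1$, any word representing it begins with some letter $\sig\jj$, which is a simple left-divisor of~$\gg_{\ii-1}$, so $\sss_\ii \neq 1$ and $\LG{\gg_\ii} < \LG{\gg_{\ii-1}}$; therefore the process reaches $\gg_\pp = 1$ after finitely many steps, yielding $\gg = \sss_1 \pdots \sss_\pp$ with every factor, in particular~$\sss_\pp$, nontrivial. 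Since $\gg_{\ii-1} = \sss_\ii \sss_{\ii+1} \pdots \sss_\pp$ and $\sss_\ii = \sss(\gg_{\ii-1})$ is by definition the greatest simple left-divisor of that tail, no $\sss \in \SSS_\nn$ with $\sss_\ii \divs \sss$ can left-divide $\sss_\ii \pdots \sss_\pp$; this is exactly the maximality condition~\eqref{E:BraidNormal}.

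For uniqueness, let $\gg = \sss_1 \pdots \sss_\pp = \ttt_1 \pdots \ttt_\qq$ be two decompositions of the prescribed form. Instantiating~\eqref{E:BraidNormal} at $\ii = 1$ says that $\sss_1$ is a \emph{maximal} simple left-divisor of~$\gg$; but we have just seen that the simple left-divisors of~$\gg$ have a \emph{greatest} element $\gg \wedge \Delta_\nn$, so the only maximal one is that greatest element, forcing $\sss_1 = \gg \wedge \Delta_\nn = \ttt_1$. Left-cancelling the common factor and inducting on~$\LG\gg$ gives $\pp = \qq$ and $\sss_\ii = \ttt_\ii$ for all~$\ii$, the requirement $\sss_\pp \neq 1 \neq \ttt_\qq$ guaranteeing that both decompositions terminate at the same stage.

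The real content is Lemma~\ref{L:BraidLattice}, whose proof---unlike its abelian analogue---is far from trivial; granting it, the only points demanding genuine \emph{care} are the two features distinguishing this from the commutative toy case. First, one must consistently use left-divisibility and left-cancellation, and remember that each $\sss_\ii$ is extracted from the tail $\sss_\ii \pdots \sss_\pp$ rather than from~$\gg$ itself. Second, the step from the \emph{local} maximality~\eqref{E:BraidNormal} to the \emph{global} uniqueness rests entirely on the identification of $\SSS_\nn$ with the left-divisors of~$\Delta_\nn$: it is this that turns ``maximal simple left-divisor'' into ``the left-gcd with~$\Delta_\nn$,'' collapsing maximal to greatest and thereby pinning down each factor uniquely. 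I expect this local-to-global passage to be the main conceptual obstacle; everything else is a faithful transcription of the abelian argument.
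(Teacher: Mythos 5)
Your proposal is correct and takes essentially the same route as the paper, which proves Prop.~\ref{P:BraidNormal} exactly by ``repeating the argument of Subsec.~\ref{SS:Abelian} (with some care)'' on the basis of Lemma~\ref{L:BraidLattice}: the head of each tail is the left-gcd with~$\Delta_\nn$, existence follows by induction on the (well-defined, additive) word length, and uniqueness follows because the set of simple left-divisors has a greatest element, so the maximality in~\eqref{E:BraidNormal} pins down each entry. The points of ``care'' you spell out --- left-cancellativity from~\cite{Gar}, length-preservation of the relations, extraction from the tail rather than from~$\gg$, and the collapse of maximal to greatest --- are precisely the details the paper leaves implicit, so nothing is missing.
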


\enlargethispage{6mm}

In other words, we obtain for every positive braid a unique greedy decomposition exactly similar to the one of Prop.\,\ref{P:AbelianNormal}. 

\begin{exam}\label{X:BraidNormalization}
\rm Consider $\gg = \sig2 \sig3 \sig2\sig2 \sig1 \sig2 \sig3 \sig3$ in~$\BP4$. First, by cutting when two strands that already crossed are about to cross for the second time, we obtain a decomposition into then three simple chunks  $\sig2 \sig3 \sig2 \sep \sig2 \sig1 \sig2 \sig3 \sep \sig3$. Next, we push the crossings upwards as much as possible, we obtain $\sig2 \sig3 \sig2 \sig1 \sep \sig2 \sig1 \sig3 \sep \sig3$, and finally $\sig1 \sig2 \sig3 \sig2 \sig1 \sep \sig2 \sig1 \sig3$, as in the diagram below. with only two entries. 
$$\begin{picture}(96,21)(0,1)
\put(0,0){\includegraphics[scale=0.6]{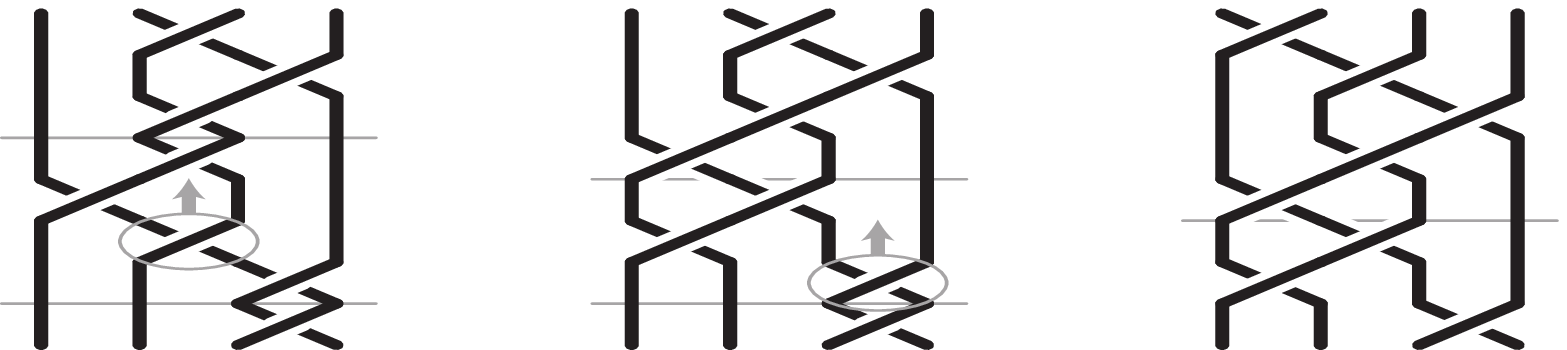}}
\end{picture}$$
We cannot go farther, so the decomposition is greedy.
\end{exam}

\begin{rema}
\rm Here again, the greedy normal form extends from the monoid to the group. It turns out that $\BR\nn$ is a group of fractions for~$\BP\nn$, and that $\Delta_\nn$ is a sort of universal denominator for the elements of the group, meaning that every element of~$\BR\nn$ can be expressed as $\Delta_\nn^\mm \gg$ with $\mm$ in~$\ZZZZ$ and~$\gg$ in~$\BP\nn$. As above, it follows that every element of~$\BR\nn$ admits a unique decomposition $\Delta_\nn^\mm \sep \sss_1 \sep \pdots \sep \sss_\pp$ with $\mm$ in~$\ZZZZ$, $\sss_1 \wdots \sss_\pp$ in~$\SSS_\nn$ satisfying~\eqref{E:AbelianNormal} and, in addition, $\sss_1 \not= \Delta_\nn$. 
\end{rema}

\section{The $\Delta$-normal form in a Garside monoid}\label{S:NormalD}

In the direction of more generality, we now explain how to unify the examples of Sec.\,\ref{S:NormalE} into the notion of a $\Delta$-normal form associated with a Garside element in what is now classically called a Garside monoid.

\subsection{Garside monoids}\label{SS:GarMon}

The greedy normal form of the braid monoid~$\BP\nn$ has been extended to other similar monoids a long time ago. Typically, an \emph{Artin--Tits monoid}, which, by definition, is a monoid defined by relations of the form $\sss\ttt\sss\ttt... = \ttt\sss\ttt\sss...$ where both sides have the same length, is called \emph{spherical} if the Coxeter group obtained by making all generators involutive, that is, adding $\sss^2 = 1$ for each generator~$\sss$, is finite. For instance, \eqref{E:BraidPresent} shows that $\BP\nn$ is an Artin--Tits monoid, whose associated Coxeter group is the finite group~$\Sym_\nn$, so $\BP\nn$ is spherical. Then, building on~\cite{BrS}, it was shown in~\cite{Cha} that all properties of the greedy normal form of braid monoids extend to spherical Artin--Tits monoids.

A further extension came with the notion of a \emph{Garside monoid} (and of a \emph{Garside group}) introduced in~\cite{Dfx} and slightly generalised in~\cite{Dgk}. Recall that a monoid~$\MM$ is  \emph{left-cancellative} (\resp \emph{right-cancellative}) if $\ff\gg = \ff\gg'$ (\resp $\gg\ff = \gg'\ff$) implies $\gg = \gg'$, and \emph{cancellative} if it is both left- and right-cancellative. As above, for $\ff, \gg$ in~$\MM$, we say that $\ff$ is a \emph{left-divisor} of~$\gg$, or, equivalently, that $\gg$ is a \emph{right-multiple} of~$\ff$, written $\ff \dive \gg$, if there exists~$\gg'$ in~$\MM$ satisfying~$\ff\gg' = \gg$. We write $\ff \divs \gg$ for $\ff \dive \gg$ with $\gg \not\dive \ff$ (which amounts to $\ff \not= \gg$ if $\MM$ has no nontrivial invertible element). For~$\gg'\ff = \gg$, we symmetrically say that $\ff$ is a \emph{right-divisor} of~$\gg$, or, equivalently, that $\gg$ is a \emph{left-multiple} of~$\ff$. Note that the set~$\gg \MM$ of all right-multiples of~$\gg$ is the right-ideal generated by~$\gg$, and, similarly, the set~$\MM\gg $ of all right-multiples of~$\gg$ is the left-ideal generated by~$\gg$, as involved in the definition of Green's relations of~$\MM$, see for instance~\cite{How}.

\pagebreak

\begin{defi}\label{D:GarMon}
\rm A \emph{Garside monoid} is a pair~$(\MM, \Delta)$, where $\MM$ is a cancellative monoid satisfying the following conditions:

\ITEM1 There exists $\lambda : \MM \to \NNNN$ satisfying, for all~$\ff, \gg$, 
$$\lambda(\ff \gg) \ge \lambda(\ff) + \lambda(\gg) \text{\qquad and\qquad } \gg \not= 1 \Rightarrow \lambda(\gg) \not= 0.$$

\ITEM2 Any two elements of~$\MM$ admit left- and right-lcms and gcds. 

\ITEM3 $\Delta$ is a \emph{Garside element} of~$\MM$, meaning that the left- and right-divisors of~$\Delta$ coincide and generate~$\MM$, 

\ITEM4 The family~$\Div(\Delta)$ of all divisors of~$\Delta$ in~$\MM$ is finite.
\end{defi}

Note that condition~\ITEM1 in Def.\,\ref{D:GarMon} implies in particular that the monoid~$\MM$ has no nontrivial invertible element (meaning: not equal to~$1$): indeed, $\lambda(1) = \lambda(1 \cdot 1) \ge \lambda(1) + \lambda(1)$ implies $\lambda(1) = 0$, so $\ff\gg = 1$ implies $0 \ge \lambda(\ff) + \lambda(\gg)$, whence $\ff = \gg = 1$. 

\begin{exam}
\rm For every~$\nn$, the pair~$(\NNNN^\nn, \Delta_\nn)$, with $\Delta_\nn$ as defined in~\eqref{E:AbelianDelta}, is a Garside monoid. Indeed, $\NNNN^\nn$ is cancellative, we can define $\lambda(\gg)$ to be the common length of all $\At_\nn$-words representing an element~$\gg$ of~$\NNNN^\nn$ and, according to Lemma~\ref{L:AbelianLattice}, the left- and right-divisibility relations (which coincide since $\NNNN^\nn$ is abelian) are lattice orders. Finally, $\Delta_\nn$ is a Garside element, since its divisors include~$\At_\nn$, and the family~$\Div(\Delta_\nn)$ is finite, since it has $2^\nn$~elements.

Similarly, the pairs $(\BP\nn, \Delta_\nn)$, with $\Delta_\nn$ now defined by~\eqref{E:Delta}, is a Garside monoid as well. That $\BP\nn$ is cancellative is proved in~\cite{Gar}, for~$\lambda(\gg)$ we can take again the common length of all braid words representing~$\gg$, and Lemma~\ref{L:BraidLattice} provides the remaining conditions.
\end{exam}

In the same vein, it can be shown that, if $\MM$ is a spherical Artin--Tits monoid and $\Delta$ is the lifting of the longest element~$\ww_0$ of the associated finite Coxeter group, then $(\MM, \Delta)$ is a Garside monoid. Actually, many more examples are known. Let us mention two. 

\begin{exam}
\rm First, let~$\BKL\nn$, the \emph{dual braid monoid}, be the submonoid of the braid group~$\BR\nn$ generated by the $\nn(\nn-1)/2$ braids 
$$\aaa{\ii , \jj} := \sig{\jj-1} \pdots \sig{\ii+1}\sig\ii\siginv{\ii+1} \pdots \siginv{\jj-1} \qquad \text{with $1 \le \ii < \jj \le \nn$},$$
and let $\Delta_\nn^*:= \aaa{1,2} \aaa{2,3} \pdots \aaa{\nn{-}1, \nn}$ \cite{BKL}. Then $(\BKL\nn, \Delta_\nn^*)$ is a Garside monoid, and $\BR\nn$ is its group of fractions (which shows that a group may be the group of fractions of several Garside monoids). Note that $\BKL\nn$ includes~$\BP\nn$, since $\sig\ii = \aaa{\ii, \ii+1}$ holds. The inclusion is strict for $\nn \ge 3$, since $\aaa{1,3}$ is not a positive braid. The lattice of the divisors of~$\Delta_\nn^*$ has $\frac{1}{\nn+1}{2\nn \choose \nn}$ elements, which are in one--one correspondence with the noncrossing partitions of~$\{1 \wdots \nn\}$~\cite{BDM}.

Second,  for $\nn \ge 1$ and $\ee_1 \wdots \ee_\nn \ge 2$, let
$$\TT^+_{\ee_1 \wdots \ee_\nn} := \MON{\tta_1 \wdots \tta_\nn}{\tta_1^{\ee_1} = \tta_2^{\ee_2} = \pdots = \tta_\nn^{\ee_\nn}}.$$
Define~$\Delta$ to be the common value of~$\tta_\ii^{\ee_\ii}$ for all~$\ii$. Then $(\TT^+_{\ee_1 \wdots \ee_\nn}, \Delta)$ is a Garside monoid. The lattice $\Div(\Delta)$ has $\ee_1 + \pdots + \ee_\nn - \nn + 2$ elements and it consists of~$\nn$ disjoint chains of lengths~$\ee_1 \wdots \ee_\nn$ connecting~$1$ to~$\Delta$. 
\end{exam}

\pagebreak

As one can expect, a greedy normal form exists in every Garside monoid:

\begin{prop}\label{P:NormalD}
Assume that $(\MM, \Delta)$ is a Garside monoid. Say that a $\Div(\Delta)$-word $\sss_1 \sep \pdots \sep \sss_\pp$ is \emph{$\Delta$-normal} if, for every~$\ii <\nobreak \pp$, we have
\begin{equation}\label{E:NormalD}
\forall \sss {\in}Ê\Div(\Delta) \  (\, \sss_\ii \divs \sss \ \Rightarrow\ \sss \notdive \sss_\ii\sss_{\ii+1}\pdots \sss_\pp\, ),
\end{equation}
and that it is \emph{strict} if, in addition, $\sss_\pp \not= 1$ holds. Then every element of~$\MM$ admits a unique strict $\Delta$-normal decomposition.
\end{prop}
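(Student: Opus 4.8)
The plan is to mirror the elementary argument already given for Proposition~\ref{P:AbelianNormal}, replacing the lattice facts of Lemma~\ref{L:AbelianLattice} by those of the Garside setting packaged in Def.\,\ref{D:GarMon}. The key structural input is that, by \ITEM2 and \ITEM3, any two elements of~$\MM$ admit a left-gcd, and the left-divisors of~$\Delta$ are exactly the elements of~$\Div(\Delta)$, a finite family by~\ITEM4 that generates~$\MM$ by~\ITEM3. I would first reformulate the normality condition~\eqref{E:NormalD}: for a decomposition $\sss_1 \sep \pdots \sep \sss_\pp$ of~$\gg$, the maximality condition says precisely that $\sss_\ii$ is the \emph{maximal} left-divisor of $\sss_\ii \sss_{\ii+1} \pdots \sss_\pp$ lying in~$\Div(\Delta)$, equivalently that $\sss_\ii$ is the left-gcd of $\sss_\ii \pdots \sss_\pp$ with~$\Delta$. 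The crucial preliminary lemma is therefore that \emph{every element~$\gg$ of~$\MM$ has a well-defined maximal left-divisor lying in~$\Div(\Delta)$}, namely the left-gcd $\gg \wedge \Delta$; its existence follows from~\ITEM2, and one checks it is a genuine maximum (not merely maximal) because $\Div(\Delta)$ is closed under left-lcm inside the divisors of~$\gg$, using that left-divisors of~$\Delta$ form a sublattice.

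For \textbf{existence}, I would run the greedy algorithm exactly as in Subsec.\,\ref{SS:Abelian}: given $\gg \neq 1$, set $\sss_1 := \gg \wedge \Delta$, the maximal $\Div(\Delta)$-left-divisor of~$\gg$, write $\gg = \sss_1 \gg'$ using left-cancellativity to make~$\gg'$ unique, and iterate on~$\gg'$. Termination is where \ITEM1 enters: the map~$\lambda$ is subadditive and strictly positive off~$1$, so $\lambda(\gg') < \lambda(\gg)$ at each nontrivial step (since $\sss_1 \neq 1$ forces $\lambda(\sss_1) \geq 1$ and $\lambda(\gg) \geq \lambda(\sss_1) + \lambda(\gg')$), guaranteeing the process halts after finitely many steps and produces a decomposition with $\sss_\pp \neq 1$. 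The only thing to verify is that the output is normal, i.e. that choosing $\sss_\ii$ maximal against~$\gg^{(\ii)} = \sss_\ii \pdots \sss_\pp$ indeed yields~\eqref{E:NormalD} for all~$\ii < \pp$; this is immediate from the reformulation above, since $\sss_\ii = \gg^{(\ii)} \wedge \Delta$ by construction.

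For \textbf{uniqueness}, I would show that the first entry of any strict normal decomposition of~$\gg$ is forced to equal $\gg \wedge \Delta$, and then induct. Suppose $\sss_1 \sep \pdots \sep \sss_\pp$ is strict normal for~$\gg$. Normality at $\ii = 1$ says $\sss_1$ is maximal in~$\Div(\Delta)$ among left-divisors of $\sss_1 \pdots \sss_\pp = \gg$, hence $\sss_1 = \gg \wedge \Delta$ by the maximum property of the lemma. Left-cancelling~$\sss_1$ gives a strict normal decomposition of~$\gg'$ of shorter $\lambda$-value; I would need the subsidiary observation that deleting the first entry of a strict normal word leaves a strict normal word, which is clear since conditions~\eqref{E:NormalD} for the tail are a subset of those for the whole word and $\sss_\pp \neq 1$ is preserved. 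Induction on~$\lambda(\gg)$ then closes the argument.

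The main obstacle I anticipate is the preliminary lemma asserting that the maximal $\Div(\Delta)$-left-divisor of~$\gg$ exists and \emph{coincides with} $\gg \wedge \Delta$ --- that is, that the left-gcd with~$\Delta$ really captures the largest simple left-divisor. This rests on the sublattice property: if $\sss, \ttt$ are left-divisors of both~$\gg$ and~$\Delta$, their left-lcm $\sss \vee \ttt$ (which exists by \ITEM2) must again left-divide~$\gg$, and must left-divide~$\Delta$ because the left-divisors of~$\Delta$ are closed under left-lcm. This closure is the genuinely Garside-specific fact and is exactly the analogue of ``$\SSS_\nn$ is a sublattice'' from Lemmas~\ref{L:AbelianLattice} and~\ref{L:BraidLattice}; once it is in hand, everything else is the routine greedy-decomposition bookkeeping transplanted verbatim from the abelian case.
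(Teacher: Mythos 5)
Your proposal is correct and follows essentially the same route as the paper's proof: take the greedy head $\sss_1 := {}$left-gcd of~$\gg$ and~$\Delta$, induct on~$\lambda$ for existence and termination, and get uniqueness by showing the first entry of any normal decomposition is forced to be this head; the equivalence you isolate between condition~\eqref{E:NormalD} and ``$\sss_\ii$ is the left-gcd of the tail with~$\Delta$'' is exactly what the paper records separately as Lemma~\ref{L:Head}, and your lcm-closure argument for it (the join of two divisors of~$\Delta$ divides~$\Delta$, being least among common right-multiples) is the paper's own. Two small repairs: the join under left-divisibility is the \emph{right}-lcm, not the ``left-lcm'' as you name it, and the nontriviality $\sss_1 \neq 1$ for $\gg \neq 1$, which you use both for the $\lambda$-decrease and for strictness, is asserted but never justified --- it needs the one-line observation that $\Div(\Delta)$ generates~$\MM$, so some nontrivial divisor of~$\Delta$ left-divides~$\gg$ and hence left-divides the gcd.
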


\begin{proof}
We show, using induction on~$\ell$, that every element~$\gg$ of~$\MM$ satisfying $\lambda(\gg) \le \ell$ admits a strict $\Delta$-normal decomposition. For $\ell = 0$, the only possibility is $\gg = 1$, and then the empty sequence is a $\Delta$-normal decomposition of~$\gg$. Assume $\ell \ge 1$, and let~$\gg$ satisfy $\lambda(\gg) \le \ell$. The case $\gg = 1$ has already been considered, so we can assume $\gg \not= 1$. Let $\sss_1$ be the left-gcd of~$\gg$ and~$\Delta$. As $\Div(\Delta)$ generates~$\MM$, some nontrivial divisor of~$\Delta$ must left-divide~$\gg$, so $\sss_1 \not= 1$ holds. As $\MM$ is left-cancellative, there is a unique element~$\gg'$ satisfying $\gg = \sss_1 \gg'$. By assumption, one has $\lambda(\gg') \le \lambda(\gg) - \lambda(\sss_1) < \ell$. Then, by the induction hypothesis, $\gg'$ admits a strict $\Delta$-normal decomposition $\sss_2 \sep \pdots \sep \sss_\pp$. Then one easily checks that $\sss_1 \sep \sss_2 \sep \pdots \sep \sss_\pp$ is a strict $\Delta$-normal decomposition for~$\gg$. 

As for uniqueness, it is easy to see that the first entry of any greedy decomposition of~$\gg$ must be~$\sss_1$, and then we apply the induction hypothesis again.
\hfill
\end{proof}

Note that, by definition, if $\sss_1 \sep \pdots \sep \sss_\pp$ is a $\Delta$-normal word, then so is every word of the form $\sss_1 \sep \pdots \sep \sss_\pp \sep 1 \sep \pdots \sep 1$: uniqueness is guaranteed only when we forbid final entries~$1$.

\subsection{Computing the $\Delta$-normal form}\label{SS:ComputingD}

Prop.\,\ref{P:NormalD} is an existential statement, which does not directly solves the question of practically computing a $\Delta$-normal decomposition of an element given by an arbitrary $\Div(\Delta)$-word. It turns out that simple incremental methods exist, which explains the interest of the $\Delta$-normal form. 

We begin with preliminary results. Their proofs are not very difficult and we give them as they are typical of what can be called the ``Garside approach''.

\begin{lemm}\label{L:Head}
Assume that $(\MM, \Delta)$ is a Garside monoid. For~$\gg$ in~$\MM$, define $\HH(\gg)$ to be the left-gcd of~$\gg$ and~$\Delta$. Then, for all $\sss_1 \wdots \sss_\pp$ in~$\Div(\Delta)$, the following conditions are equivalent:

\ITEM1 The sequence $\sss_1 \sep \pdots \sep \sss_\pp$ is $\Delta$-normal.

\ITEM2 For every~$\ii < \pp$, one has $\sss_\ii = \HH(\sss_\ii \sss_{\ii+1}\pdots \sss_\pp)$.

\ITEM3 For every~$\ii < \pp$, one has $\sss_\ii = \HH(\sss_\ii \sss_{\ii+1})$.
\end{lemm}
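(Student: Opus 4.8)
The plan is to prove the chain of equivalences by establishing $\ITEM1 \Rightarrow \ITEM2 \Rightarrow \ITEM3 \Rightarrow \ITEM1$, exploiting that $\HH(\gg)$, being the left-gcd of $\gg$ with $\Delta$, is exactly the largest left-divisor of $\gg$ that lies in $\Div(\Delta)$. The key reformulation I would record first is that condition~\eqref{E:NormalD} says precisely that no element of $\Div(\Delta)$ strictly extending $\sss_\ii$ can still left-divide $\sss_\ii \pdots \sss_\pp$; since $\sss_\ii$ itself does left-divide $\sss_\ii \pdots \sss_\pp$, this is equivalent to saying $\sss_\ii$ is the \emph{maximal} left-divisor of $\sss_\ii \pdots \sss_\pp$ lying in $\Div(\Delta)$, that is, $\sss_\ii = \HH(\sss_\ii \sss_{\ii+1} \pdots \sss_\pp)$. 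This directly gives $\ITEM1 \Leftrightarrow \ITEM2$, provided one justifies that the maximal such divisor is well-defined and equals the left-gcd with $\Delta$ --- which follows from the lattice property~\ITEM2 of Def.\,\ref{D:GarMon} together with the fact that a common left-divisor of $\gg$ and $\Delta$ is the same thing as a left-divisor of $\gg$ lying in $\Div(\Delta)$.

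For $\ITEM2 \Rightarrow \ITEM3$, I would argue that if $\sss_\ii = \HH(\sss_\ii \pdots \sss_\pp)$, then $\sss_\ii$ is also the head of the shorter product $\sss_\ii \sss_{\ii+1}$. The point is that $\sss_\ii \divs \sss_\ii\sss_{\ii+1} \divs \sss_\ii \pdots \sss_\pp$, so the head of the middle term is squeezed between $\sss_\ii = \HH(\sss_\ii\pdots\sss_\pp)$ below and... more carefully: any left-divisor $\ttt \in \Div(\Delta)$ of $\sss_\ii\sss_{\ii+1}$ is a fortiori a left-divisor of $\sss_\ii \pdots \sss_\pp$, hence $\ttt \dive \HH(\sss_\ii\pdots\sss_\pp) = \sss_\ii$; combined with the trivial fact $\sss_\ii \dive \HH(\sss_\ii\sss_{\ii+1})$, this forces $\HH(\sss_\ii\sss_{\ii+1}) = \sss_\ii$. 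This direction is therefore the easy monotonicity observation that the head can only shrink when we pass to a left-divisor of the argument.

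The genuinely substantive direction is $\ITEM3 \Rightarrow \ITEM2$ (equivalently $\ITEM3 \Rightarrow \ITEM1$), because it requires amplifying a two-letter local condition into a global one. The plan is a downward induction on $\ii$, or equivalently to prove the statement ``$\HH(\sss_\ii\sss_{\ii+1}) = \sss_\ii$ for all $\ii<\pp$ implies $\HH(\sss_\ii \sss_{\ii+1}\pdots\sss_\pp)=\sss_\ii$ for all $\ii<\pp$''. The heart of the matter is the \emph{domino} or \emph{local-to-global} lemma for heads: one needs that if $\sss = \HH(\sss\ttt)$ and $\ttt = \HH(\ttt\uu)$, with $\sss,\ttt,\uu \in \Div(\Delta)$, then $\sss = \HH(\sss\ttt\uu)$. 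I expect this to be the main obstacle, since it does not follow from formal lattice manipulation alone and is the place where Garside theory really enters. The argument I would attempt: suppose $\rr \in \Div(\Delta)$ left-divides $\sss\ttt\uu$ with $\sss \dive \rr$; write $\rr = \sss\rr'$ and show, using left-cancellativity and the lcm structure, that $\rr'$ is forced to left-divide $\ttt\uu$ and to lie in $\Div(\Delta)$ (this last containment being the delicate point, typically requiring that $\Delta$ is a two-sided Garside element so that complementation by divisors of $\Delta$ stays within $\Div(\Delta)$). Then $\ttt = \HH(\ttt\uu)$ forces $\rr' \dive \ttt$, whence $\rr \dive \sss\ttt$ and, since $\sss = \HH(\sss\ttt)$, we get $\rr = \sss$. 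Granting this domino lemma, the induction is immediate: assuming $\sss_{\ii+1} = \HH(\sss_{\ii+1}\pdots\sss_\pp)$ and $\sss_\ii = \HH(\sss_\ii\sss_{\ii+1})$, applying the lemma with $\ttt = \sss_{\ii+1}$ and $\uu = \sss_{\ii+2}\pdots\sss_\pp$ yields $\sss_\ii = \HH(\sss_\ii\pdots\sss_\pp)$, which closes the loop back to~\ITEM2 and hence to~\ITEM1.
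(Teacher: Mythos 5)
Your proposal is correct, and in the one substantive direction it takes a genuinely different route from the paper. For \ITEM1\,$\Leftrightarrow$\,\ITEM2 the two arguments coincide in substance: the paper forms the right-lcm of~$\sss_\ii$ and~$\HH(\sss_\ii\pdots\sss_\pp)$ and applies~\eqref{E:NormalD} to it, which is exactly your ``a $\divs$-maximal left-divisor lying in~$\Div(\Delta)$ must equal the $\dive$-greatest one'' reduction; both rest on the observation $\sss_\ii \dive \HH(\sss_\ii\pdots\sss_\pp)$ and on the gcd/lattice property of Def.\,\ref{D:GarMon}\ITEM2, and your \ITEM2\,$\Rightarrow$\,\ITEM3 monotonicity remark is verbatim the paper's. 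The divergence is in \ITEM3\,$\Rightarrow$\,\ITEM2: the paper proves outright that \emph{every} $\sss$ in~$\Div(\Delta)$ left-dividing $\sss_1\sss_2\sss_3$ left-divides~$\sss_1$, and for this it needs the right-complement operation~$\under$ and the law $(\gg\hh)\under\ff = (\gg\under\ff)\under\hh$ of~\eqref{E:RrightCompl}, extracting $\sss_1\under\sss \dive \sss_2\sss_3$ from $(\sss_1\sss_2\sss_3)\under\sss = 1$ and arguing back through the two local hypotheses. You instead test only those~$\rr$ satisfying $\sss \dive \rr$ --- legitimate, since your maximal-equals-greatest reduction shows this suffices (take $\rr = \HH(\sss\ttt\uu)$ at the end) --- and then the factorisation $\rr = \sss\rr'$ is available for free, so your domino lemma needs nothing beyond left-cancellativity, the closure of~$\Div(\Delta)$ under right-divisor (a consequence of the left- and right-divisors of~$\Delta$ coinciding, the point you rightly flag as the delicate one), and the two head hypotheses. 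Your route is thus more elementary, avoiding the complement calculus entirely at the cost of one extra appeal to the gcd structure per step; the paper's heavier route buys the unrestricted divisibility statement directly and installs the law~\eqref{E:RrightCompl}, which it immediately reuses in Lemma~\ref{L:DominoD} and Ex.\,\ref{X:Connection}, so the machinery is not wasted there.

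One cosmetic slip: you state the domino lemma with $\sss, \ttt, \uu$ all in~$\Div(\Delta)$, but you apply it with $\uu = \sss_{\ii+2}\pdots\sss_\pp$, an arbitrary element of~$\MM$. Since your sketched proof never uses $\uu \in \Div(\Delta)$ --- only $\rr, \rr' \in \Div(\Delta)$ matter --- simply restate the lemma for $\uu$ in~$\MM$ and the downward induction closes as you describe.
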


\begin{proof}
Assume that $\sss_1 \sep \pdots \sep \sss_\pp$ is $\Delta$
-normal. By definition, we have $\sss_\ii \dive \Delta$ and $\sss_\ii \dive \sss_\ii\sss_{\ii+} \pdots \sss_\pp$, whence $\sss_\ii \dive \HH(\sss_\ii \sss_{\ii+1}\pdots \sss_\pp)$ by the definition of a left-gcd. Conversely, let $\sss$ be the right-lcm of~$\sss_\ii$ and~$\HH(\sss_\ii \sss_{\ii+1}\pdots \sss_\pp)$. As we have $\sss_\ii \dive \Delta$ and $\HH(\sss_\ii \sss_{\ii+1}\pdots \sss_\pp) \dive \Delta$, the definition of a right-lcm implies $\sss \dive \Delta$. Hence, we have $\sss_1 \dive \sss \in \Div(\Delta)$ and $\sss \dive \sss_\ii \sss_{\ii+1}\pdots \sss_\pp$, so \eqref{E:NormalD} implies that $\sss_\ii \divs \sss$ is impossible. Therefore, we must have $\sss = \sss_\ii$, meaning that $\HH(\sss_\ii \sss_{\ii+1}\pdots \sss_\pp)$ left-divides~$\sss_\ii$. We deduce $\sss_\ii = \HH(\sss_\ii \sss_{\ii+1}\pdots \sss_\pp)$. Therefore, \ITEM1 implies~\ITEM2. 

The proof that \ITEM1 implies~\ITEM3 is exactly similar, replacing $\sss_\ii \pdots \sss_\pp$ with $\sss_\ii \sss_{\ii+1}$. 

Conversely, \ITEM2 implies $\forall\sss{\in}\Div(\Delta)\,(\sss\dive \sss_\ii \sss_{\ii+1}\pdots \sss_\pp\ \Rightarrow \sss \dive \sss_\ii)$, whence a fortiori $\forall\sss{\in}\Div(\Delta)\,(\sss\dive \sss_\ii \sss_{\ii+1}\pdots \sss_\pp\ \Rightarrow \sss_\ii \not\divs \sss)$, which is equivalent to~\eqref{E:NormalD}. So \ITEM1 and~\ITEM2 are equivalent. 

Next, by the definition of the left-gcd, $\sss_\ii \dive \HH(\sss_\ii \sss_{\ii+1}) \dive \HH(\sss_\ii \sss_{\ii+1}\pdots \sss_\pp)$ is always true, so \ITEM2 implies~\ITEM3.

Finally, let us assume~\ITEM3 and prove~\ITEM2. We give the argument for $\pp =\nobreak 3$, the general case then follows by an induction on~$\pp$. So, we assume $\sss_1 = \HH(\sss_1\sss_2)$ and $\sss_2 = \HH(\sss_2\sss_3)$, and want to prove $\sss_1 = \HH(\sss_1 \sss_2 \sss_3)$. The nontrivial argument uses the \emph{right-complement} operation. By assumption, any two elements~$\ff, \gg$ of~$\MM$ admit a right-lcm, say~$\hh$. Let us denote by~$\ff \under \gg$ and $\gg \under \ff$ the (unique) elements satisfying $\ff (\ff\under\gg) = \gg(\gg\under\ff) = \hh$. Using the associativity of the right-lcm operation, one checks that the right-complement operation~$\under$ obeys the law
\begin{equation}\label{E:RrightCompl}
(\gg\hh) \under \ff = (\gg\under\ff) \under\hh.
\end{equation}

Now, let~$\sss$ be an element of~$\Div(\Delta)$ left-dividing~$\sss_1 \sss_2 \sss_3$. Our aim is to show that $\sss$ left-divides~$\sss_1$. By assumption, the right-lcm of~$\sss$ and~$\sss_1 \sss_2 \sss_3$ is $\sss_1 \sss_2 \sss_3$, so we have $(\sss_1 \sss_2 \sss_3) \under \sss = 1$. Applying~\eqref{E:RrightCompl} with $\ff = \sss$, $\gg = \sss_1$, and $\hh = \sss_2 \sss_3$, we deduce $(\sss_1\under\sss) \under (\sss_2\sss_3) = 1$, which means that $\sss_1 \under \sss$ left-divides~$\sss_2 \sss_3$. Because $\sss_1$ and~$\sss$ divide~$\Delta$, so does their right-lcm~$\ttt$, hence so does also their right-complement~$\sss_1 \under \sss$, which is a right-divisor of~$\ttt$. So we have $\sss_1 \under \sss \dive \sss_2\sss_3$, and the assumption $\sss_2 = \HH(\sss_2 \sss_3)$ then implies $\sss_1 \under\sss \dive \sss_2$. Arguing back, we deduce that $\sss_1\sss_2$ is the right-lcm of~$\sss$ and~$\sss_1 \sss_2$, that is, that $\sss$ left-divides~$\sss_1 \sss_2$. From there, the assumption $\sss_1 = \HH(\sss_1 \sss_2)$ implies that $\sss$ left-divides~$\sss_1$, as expected. \hfill
\end{proof}

Lemma~\ref{L:Head} is not yet sufficient to compute a $\Delta$-normal decomposition for an arbitrary element, but it already implies an important property:

\begin{prop}\label{P:Regular}
If $(\MM, \Delta)$ is a Garside monoid, then $\Delta$-normal words form a regular language.
\end{prop}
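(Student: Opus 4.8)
The plan is to read regularity off directly from the local characterisation of $\Delta$-normality supplied by Lemma~\ref{L:Head}. The essential observation is that, by the equivalence of~\ITEM1 and~\ITEM3 there, being $\Delta$-normal is a purely \emph{local} property: a word $\sss_1 \sep \pdots \sep \sss_\pp$ over the alphabet $\Div(\Delta)$ is $\Delta$-normal if, and only if, each of its length-two factors $\sss_\ii \sep \sss_{\ii+1}$ satisfies $\sss_\ii = \HH(\sss_\ii \sss_{\ii+1})$. Two conditions from Def.\,\ref{D:GarMon} make this usable: by~\ITEM4 the alphabet $\Div(\Delta)$ is finite, and by~\ITEM2 the left-gcd, hence the map~$\HH$, is everywhere defined on it.

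First I would record the finite set of \emph{admissible pairs}
$$P := \{ (\sss, \ttt) \in \Div(\Delta) \times \Div(\Delta) \mid \sss = \HH(\sss\ttt) \},$$
which is well-defined and finite precisely because $\Div(\Delta)$ is finite and left-gcds exist. Lemma~\ref{L:Head} then states exactly that the language~$L$ of $\Delta$-normal words is the set of all words over $\Div(\Delta)$ every length-two factor of which lies in~$P$; this is a so-called local (or $2$-testable) language, and such languages are classically regular.

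Next I would make regularity explicit by exhibiting a finite automaton for~$L$. Take as state set $\Div(\Delta) \cup \{\iota\}$, with $\iota$ a fresh initial state; declare every state accepting; put a $\ttt$-labelled transition from~$\iota$ to~$\ttt$ for each $\ttt$ in $\Div(\Delta)$ (there is no constraint on the first letter alone), and a $\ttt$-labelled transition from~$\sss$ to~$\ttt$ whenever $(\sss,\ttt) \in P$, all other transitions being undefined (rejecting). This deterministic automaton, whose current state merely records the last letter read, accepts a word exactly when each consecutive pair of letters is admissible, that is, exactly when the word is $\Delta$-normal; the empty word is accepted at~$\iota$, matching the fact that it is vacuously $\Delta$-normal. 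Since $\Div(\Delta)$ is finite, the automaton has finitely many states, so~$L$ is regular.

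I do not expect a serious obstacle here: once Lemma~\ref{L:Head} is available, the whole content of the proposition is the reduction to a local condition, and that reduction is exactly what the lemma provides. The one hypothesis that is genuinely consumed, and the only point where the argument would break in a more general setting, is the finiteness of $\Div(\Delta)$ from~\ITEM4: without it the automaton would have infinitely many states and the local characterisation would no longer deliver a regular language. I would therefore take care to flag that finiteness is the essential input, and that the step from ``$\Delta$-normal'' to ``every length-two factor admissible'' is the real mathematical ingredient, borrowed wholesale from Lemma~\ref{L:Head}.
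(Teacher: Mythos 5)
Your proposal is correct and follows exactly the paper's argument: Lemma~\ref{L:Head} reduces $\Delta$-normality to a condition on length-two factors, and finiteness of $\Div(\Delta)$ (Def.~\ref{D:GarMon}\ITEM4) then yields regularity, the paper phrasing this as exclusion of finitely many forbidden patterns where you make the equivalent finite automaton explicit. Your added remark correctly identifies finiteness of $\Div(\Delta)$ as the hypothesis genuinely consumed.
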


\begin{proof}
By assumption, the family $\Div(\Delta)$ is finite. Moreover, by Lemma~\ref{L:Head}, a word $\sss_1 \sep \pdots \sep \sss_\pp$ is $\Delta$-normal if, and only if, each length-two factor $\sss_\ii \sep \sss_{\ii+1}$ is $\Delta$-normal. Hence, the language of all $\Delta$-normal words is defined over the alphabet $\Div(\Delta)$ by the exclusion of finitely many patterns, namely the pairs $\sss_\ii \sep \sss_{\ii+1}$ satisfying $\sss_\ii \not= \HH(\sss_\ii\sss_{\ii+1})$. Hence it is a regular language.
\end{proof}

We now specifically consider the normalisation of $\Div(\Delta)$-words of length~two.


\begin{lemm}\label{L:PsiD}
If $(\MM, \Delta)$ is a Garside monoid, then, for all~$\sss_1, \sss_2$ in~$\Div(\Delta)$, the element~$\sss_1\sss_2$ has a unique $\Delta$-normal decomposition of length~two. 
\end{lemm}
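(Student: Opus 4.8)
The plan is to single out the obvious candidate and then check it is legitimate. I would set $\ttt_1 := \HH(\sss_1\sss_2)$, the left-gcd of $\sss_1\sss_2$ with~$\Delta$; this lies in $\Div(\Delta)$ by the very definition of~$\HH$, and it left-divides $\sss_1\sss_2$, so left-cancellativity provides a unique~$\ttt_2$ with $\sss_1\sss_2 = \ttt_1\ttt_2$. Since $\ttt_1\ttt_2 = \sss_1\sss_2$ gives $\HH(\ttt_1\ttt_2) = \HH(\sss_1\sss_2) = \ttt_1$, the equivalence \ITEM1$\Leftrightarrow$\ITEM2 of Lemma~\ref{L:Head} (applied with $\pp = 2$) tells us that $\ttt_1 \sep \ttt_2$ is $\Delta$-normal the moment we know it is a $\Div(\Delta)$-word. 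So the whole statement reduces to checking $\ttt_2 \in \Div(\Delta)$ together with uniqueness.

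The step I expect to be the main obstacle is showing that the tail~$\ttt_2$ divides~$\Delta$. The idea is to exhibit $\ttt_2$ as a right-divisor of~$\sss_2$ and then invoke the symmetry of the Garside element. Concretely, $\sss_1 \dive \Delta$ and $\sss_1 \dive \sss_1\sss_2$ make $\sss_1$ a common left-divisor of $\sss_1\sss_2$ and~$\Delta$, whence $\sss_1 \dive \ttt_1$ by the definition of the left-gcd; writing $\ttt_1 = \sss_1\rr$ and cancelling $\sss_1$ on the left in $\sss_1\sss_2 = \sss_1\rr\ttt_2$ yields $\sss_2 = \rr\ttt_2$, so $\ttt_2$ is a right-divisor of~$\sss_2$. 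Now $\sss_2$ lies in $\Div(\Delta)$, and by condition~\ITEM3 of Def.\,\ref{D:GarMon} the left- and right-divisors of~$\Delta$ coincide, so $\sss_2$ is in particular a right-divisor of~$\Delta$; since right-divisibility is transitive, $\ttt_2$ is a right-divisor of~$\Delta$, hence again a divisor, i.e. $\ttt_2 \in \Div(\Delta)$. This is exactly the place where the Garside hypothesis on~$\Delta$ (and not merely lattice structure) is used.

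For uniqueness, suppose $\ttt_1' \sep \ttt_2'$ is any length-two $\Delta$-normal decomposition of~$\sss_1\sss_2$. The implication \ITEM1$\Rightarrow$\ITEM2 of Lemma~\ref{L:Head} forces $\ttt_1' = \HH(\ttt_1'\ttt_2') = \HH(\sss_1\sss_2) = \ttt_1$, and then left-cancellativity gives $\ttt_2' = \ttt_2$. Hence $\ttt_1 \sep \ttt_2$ is the unique length-two $\Delta$-normal decomposition of~$\sss_1\sss_2$, the value $\ttt_2 = 1$ being allowed and corresponding to the case $\sss_1\sss_2 \dive \Delta$.
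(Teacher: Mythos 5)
Your proof is correct and takes essentially the same route as the paper's: define the first entry as $\HH(\sss_1\sss_2)$, obtain the second by left-cancellation, and observe that it right-divides~$\sss_2$, hence~$\Delta$, so that Lemma~\ref{L:Head} yields $\Delta$-normality. You are merely more explicit than the paper on two points it leaves implicit, namely the factorisation $\ttt_1 = \sss_1\rr$ behind the tail-divisibility claim and the uniqueness argument via $\ttt'_1 = \HH(\sss_1\sss_2)$.
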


\begin{proof}
Let $\sss_1, \sss_2$ belong to~$\Div(\Delta)$. Let $\sss'_1 := \HH(\sss_1\sss_2)$. As $\MM$ is left-cancellative, there exists a unique element~$\sss'_2$ satisfying $\sss'_1 \sss'_2 = \sss_1 \sss_2$. By construction, $\sss_1$ left-divides~$\sss'_1$, which implies that $\sss'_2$ right-divides~$\sss_2$, hence a fortiori~$\Delta$. So $\sss'_2$ lies in~$\Div(\Delta)$. By construction, the $\Div(\Delta)$-word $\sss'_1 \sep \sss'_2$ is $\Delta$-normal, and it is a decomposition of~$\sss_1 \sss_2$. Three cases are possible: if $\sss'_2$ is not $1$, then $\sss'_1 \sep \sss'_2$ is a strict $\Delta$-normal decomposition of~$\sss_1 \sss_2$ (which is then said to have $\Delta$-length two); otherwise, $\sss_1\sss_2$ is a divisor of~$\Delta$, so $\sss'_1$ is a strict $\Delta$-normal decomposition of~$\sss_1 \sss_2$ (which is then said to have $\Delta$-length one), unless $\sss'_1$ is also~$1$, corresponding to $\sss_1 = \sss_2 = 1$, where the strict $\Delta$-normal decomposition is empty (and $\sss_1 \sss_2$, which is~$1$, is said to have $\Delta$-length zero).
\hfill\end{proof}

The previous result and the subsequent arguments become (much) more easily understandable when illustrated with diagrams. To this end, we associate to every element~$\gg$ of the considered monoid a $\gg$-labeled edge \begin{picture}(12,2)(0,0)\pcline{->}(1,0)(11,0)\taput{$\gg$}\end{picture}. 

\noindent\begin{minipage}{\textwidth}
\rightskip25mm
We \VR(3.2,0) then associate with a product the concatenation of the corresponding edges (which amounts to viewing the ambient monoid as a category), and represent equalities in the ambient monoid using commutative diagrams: for instance, the square on the right illustrates an equality $\ff\gg =\nobreak \ff'\gg'$.\ \hfill
\begin{picture}(0,0)(-6,-3)
\pcline{->}(0,9)(0,1)\tlput{$\ff$}
\pcline{->}(1,0)(14,0)\tbput{$\gg$}
\pcline{->}(1,10)(14,10)\taput{$\ff'$}
\pcline{->}(15,9)(15,1)\trput{$\gg'$}
\end{picture}\par
\end{minipage}

Next, \VR(3.2,0) assuming that a set of elements~$\SSS$ is given and some distinguished subset~$\FFF$ of~$\Pow\SSS2$ has been fixed, typically length-two normal words of some sort, we shall indicate that a length-two $\SSS$-word $\sss_1 \sep \sss_2$ belongs to~$\FFF$ (that is, ``is normal'') by connecting 
  
\noindent\begin{minipage}{\textwidth}
\rightskip25mm
the \VR(3.2,0) corresponding edges with a small arc, as in \begin{picture}(24,4)(0,0)\pcline{->}(1,0)(11,0)\taput{$\sss_1$}\pcline{->}(13,0)(23,0)\taput{$\sss_2$}\psarc[style=thin](11.5,0){3}{0}{180}\end{picture}. Then, \VR(3.2,0) Lemma~\ref{L:PsiD} is illustrated in the diagram on the right: it says that, for all given~$\sss_1, \sss_2$ in~$\Div(\Delta)$ (solid arrows), there exist~$\sss'_1, \sss'_2$ in~$\Div(\Delta)$ (dashed arrows) such that $\sss'_1 \sep \sss'_2$ is $\Delta$-normal and the diagram is commutative.\hfill
\begin{picture}(0,0)(-6,-4)
\pcline{->}(0,9)(0,1)\tlput{$\sss_1$}
\pcline{->}(1,0)(14,0)\tbput{$\sss_2$}
\pcline[style=exist]{->}(1,10)(14,10)\taput{$\sss'_1$}
\pcline[style=exist]{->}(15,9)(15,1)\trput{$\sss'_2$}
\psarc[style=thinexist](15,10){3.5}{180}{270}
\end{picture}\par
\end{minipage}

The \VR(3.2,0) second ingredient needed for computing the normal form involves what the call the (left) domino rule.

\begin{defi}\label{D:LeftDomino}
\rm Assume that $\MM$ is a left-cancellative monoid, $\SSS$ is a subset of~$\MM$, and $\FFF$ is a family of $\SSS$-words of length~two. We say that the \emph{left domino rule is valid for~$\FFF$} if, whenever $\sss_1, \sss_2, \sss'_1, \sss'_2, \ttt_0, \ttt_1, \ttt_2$ lie in~$\SSS$ and $\sss'_1 \ttt_1 = \ttt_0\sss_1$ and $\sss'_2 \ttt_2 = \ttt_1\sss_2$ hold in~$\MM$, then the assumption that $\sss_1 \sep \sss_2$, $\sss'_1 \sep \ttt_1$, and $\sss'_2 \sep \ttt_2$ lie in~$\FFF$ implies that $\sss'_1 \sep \sss'_2$ lies in~$\FFF$ as well.
\end{defi}

\noindent\begin{minipage}{\linewidth}
\rightskip42mm
The \VR(3,0) left domino rule corresponds to the diagram on the right: the solid arcs are the assumptions, namely that $\sss'_1 \sep \ttt_1$, $\sss'_2 \sep \ttt_2$ and $\sss_1 \sep \sss_2$ lie in~$\FFF$, the dotted arc is the expected conclusion, namely that $\sss'_1 \sep \sss'_2$ does.\hfill%
\begin{picture}(0,0)(-8,-2)
\psarc[style=thin](15,0){3}{180}{360}
\psarc[style=thin](15,10){3.5}{180}{270}
\psarc[style=thin](30,10){3.5}{180}{270}
\psarc[style=thinexist](15,10){3}{0}{180}
\pcline{->}(1,0)(14,0)\tbput{$\sss_1$}
\pcline{->}(16,0)(29,0)\tbput{$\sss_2$}
\pcline{->}(1,10)(14,10)\taput{$\sss'_1$}
\pcline{->}(16,10)(29,10)\taput{$\sss'_2$}
\pcline{->}(0,9)(0,1)\tlput{$\ttt_0$}
\pcline{->}(15,9)(15,1)\trput{$\ttt_1$}
\pcline{->}(30,9)(30,1)\trput{$\ttt_2$}
\end{picture}\par
\end{minipage}

\begin{lemm}\label{L:DominoD}
If \VR(3.2,0) $(\MM, \Delta)$ is a Garside monoid, then the left domino rule is valid for $\Delta$-normal words of length~two. 
\end{lemm}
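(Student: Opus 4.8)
The plan is to convert both the hypotheses and the conclusion into statements about the head map $\HH$ of Lemma~\ref{L:Head}, where $\HH(\gg)$ denotes the left-gcd of $\gg$ and $\Delta$, and then to feed the single diagrammatic identity coming from the two squares into one gcd computation. By Lemma~\ref{L:Head}\ITEM2 (for $\pp = 2$), the conclusion that $\sss'_1\sep\sss'_2$ is $\Delta$-normal is exactly the equality $\sss'_1 = \HH(\sss'_1\sss'_2)$. Since $\sss'_1$ left-divides both $\sss'_1\sss'_2$ and $\Delta$, the relation $\sss'_1\dive\HH(\sss'_1\sss'_2)$ is automatic, so the entire task reduces to proving the reverse divisibility
$$\HH(\sss'_1\sss'_2)\dive\sss'_1.$$

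First I would extract the only input from the diagram: composing the squares $\sss'_1\ttt_1 = \ttt_0\sss_1$ and $\sss'_2\ttt_2 = \ttt_1\sss_2$ yields $\sss'_1\sss'_2\ttt_2 = \ttt_0\sss_1\sss_2$, hence $\sss'_1\sss'_2\dive\ttt_0\sss_1\sss_2$. As $\gg\mapsto\HH(\gg)$ is monotone for left-divisibility (a common left-divisor of $\gg$ and $\Delta$ is still a common left-divisor of any right-multiple of $\gg$ and of $\Delta$), this gives $\HH(\sss'_1\sss'_2)\dive\HH(\ttt_0\sss_1\sss_2)$.

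The crux is then the following claim, and this is where the normality of the bottom row is used: for every $\ttt_0$ in $\Div(\Delta)$ and every $\Delta$-normal pair $\sss_1\sep\sss_2$, one has $\HH(\ttt_0\sss_1\sss_2)\dive\ttt_0\sss_1$. To prove it, let $\uu$ be the right-lcm of $\ttt_0$ and $\HH(\ttt_0\sss_1\sss_2)$. Both factors divide $\Delta$, so $\uu\dive\Delta$, and both left-divide $\ttt_0\sss_1\sss_2$, so $\uu\dive\ttt_0\sss_1\sss_2$. Writing $\uu = \ttt_0\vv$, left-cancellation gives $\vv\dive\sss_1\sss_2$; moreover, $\uu$ divides $\Delta$, so---$\Delta$ being a Garside element---$\uu$ right-divides $\Delta$, whence its right-divisor $\vv$ right-divides $\Delta$ and therefore divides $\Delta$. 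Now $\sss_1\sep\sss_2$ being $\Delta$-normal means $\sss_1 = \HH(\sss_1\sss_2)$ (Lemma~\ref{L:Head}), so $\vv\dive\sss_1\sss_2$ together with $\vv\dive\Delta$ forces $\vv\dive\sss_1$; therefore $\HH(\ttt_0\sss_1\sss_2)\dive\uu = \ttt_0\vv\dive\ttt_0\sss_1$, as claimed.

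It then remains to assemble the pieces. The last two paragraphs give $\HH(\sss'_1\sss'_2)\dive\ttt_0\sss_1$, and $\ttt_0\sss_1 = \sss'_1\ttt_1$ by the left square; since $\HH(\sss'_1\sss'_2)$ also divides $\Delta$, it divides $\HH(\sss'_1\ttt_1)$, which equals $\sss'_1$ precisely because $\sss'_1\sep\ttt_1$ is $\Delta$-normal. This is the required reverse divisibility, completing the proof. I expect the claim to be the main obstacle: the delicate point is to peel the prescribed prefix $\ttt_0$ off the head $\HH(\ttt_0\sss_1\sss_2)$ while keeping the remaining factor $\vv$ inside $\Div(\Delta)$, for only then does the normality of $\sss_1\sep\sss_2$ apply; the two-sided divisor property of $\Delta$ is what makes this step work. (Alternatively one could run a right-complement computation in the spirit of the proof of Lemma~\ref{L:Head}. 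It is worth noting that this route invokes only the normality of $\sss_1\sep\sss_2$ and of $\sss'_1\sep\ttt_1$, not that of $\sss'_2\sep\ttt_2$.)
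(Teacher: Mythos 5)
Your proof is correct and follows essentially the same route as the paper's: both compose the two squares to get $\sss'_1\sss'_2\ttt_2 = \ttt_0\sss_1\sss_2$, peel off~$\ttt_0$ by a right-lcm/right-complement computation that stays inside~$\Div(\Delta)$ thanks to the two-sidedness of the divisors of~$\Delta$, and then apply the normality of $\sss_1 \sep \sss_2$ and of $\sss'_1 \sep \ttt_1$ in turn; your only variation is to run this on the single element~$\HH(\sss'_1\sss'_2)$ via monotonicity of~$\HH$ (your~$\vv$ is exactly $\ttt_0 \under \HH(\ttt_0\sss_1\sss_2)$) instead of on an arbitrary divisor $\sss \in \Div(\Delta)$ of~$\sss'_1\sss'_2$. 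Your closing observation that the normality of $\sss'_2 \sep \ttt_2$ is never used also matches the paper, whose proof likewise invokes only the equality $\sss'_2\ttt_2 = \ttt_1\sss_2$.
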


\begin{proof}
Assume that $\sss_1, \sss_2, \sss'_1, \sss'_2, \ttt_0, \ttt_1, \ttt_2$ lie in~$\SSS$ and satisfy the assumptions of Def.\,\ref{D:LeftDomino} (with respect to $\Delta$-normal words of length~two). We want to show that $\sss'_1 \sep \sss'_2$ is $\Delta$-normal. In view of Lemma~\ref{L:Head}, assume $\sss \in \Div(\Delta)$ and $\sss \dive \sss'_1 \sss'_2$. 

\noindent\begin{minipage}{\textwidth}
\rightskip48mm
Then, \VR(3.2,0) we have $\sss \dive \sss'_1 \sss'_2 \ttt_2$, whence $\sss \dive \ttt_0 \sss_1 \sss_2$, see the diagram on the right. Arguing as in the proof of Lemma~\ref{L:Head}, we deduce $\ttt_0 \under \sss \dive \sss_1 \sss_2$. As $\ttt_0 \under\sss$  lies in~$\Div(\Delta)$, we deduce $\ttt_0 \under \sss \dive \HH(\sss_1 \sss_2) = \sss_1$, whence $\sss \dive \ttt_0 \sss_1 = \sss'_1 \ttt_1$. As $\sss$ lies in~$\Div(\Delta)$, we deduce $\sss \dive \HH(\sss'_1 \ttt_1) = \sss'_1$. Therefore, we have $\sss'_1 = \HH(\sss'_1 \sss'_2)$, and $\sss'_1 \sep \sss'_2$ is $\Delta$-normal.\hfill\qed
\begin{picture}(0,0)(-6,1.5)
\pcline{->}(11,25)(24,25)\taput{$\sss'_1$}
\pcline{->}(26,25)(39,25)\taput{$\sss'_2$}
\pcline{->}(11,10)(24,10)\tbput{$\sss_1$}
\pcline{->}(26,10)(39,10)\tbput{$\sss_2$}
\pcline{->}(10,24)(10,11)\put(11,18.5){$\ttt_0$}
\pcline{->}(25,24)(25,11)\put(26,18.5){$\ttt_1$}
\pcline{->}(40,24)(40,11)\put(41,18.5){$\ttt_2$}
\pcline{->}(9,24)(1,16)\taput{$\sss$}
\pcline{->}(9,9)(1,1)\trput{$\ttt_0\under\sss$}
\pcline{->}(0,14)(0,1)
\pcline(1,0)(25,0)\psbezier(25,0)(28,0)(30,0)(33,3)\pcline{->}(33,3)(39,9)
\pcline[border=2pt](1,15)(25,15)\psbezier[border=2pt](25,15)(28,15)(30,15)(33,18)\pcline{->}(33,18)(39,24)
\psbezier[style=exist](10,0)(13,0)(15,0)(18,3)\pcline[style=exist]{->}(18,3)(24,9)
\psbezier[style=exist](10,15)(13,15)(15,15)(18,18)\pcline[style=exist]{->}(18,18)(24,24)
\psarc[style=thin](25,10){3}{180}{360}
\psarc[style=thin](25,25){3.5}{180}{270}
\psarc[style=thin](40,25){3.5}{180}{270}
\psarc[style=thinexist](25,25){3}{0}{180}
\end{picture}
\end{minipage}
\def\qed{\relax}\end{proof}

\bigskip

We can now easily compute a $\Delta$-normal decomposition for every element. In order to describe the procedure (which can be translated into an algorithm directly), we work with $\Div(\Delta)$-words and, starting from an arbitrary $\Div(\Delta)$-word~$\ww$, explain how to build a $\Delta$-normal word that represents the same element. To this end, we first introduce notations that will be used throughout the paper.

\begin{nota}\label{N:Positions}
\rm \ITEM1 If $\SSS$ is a set and~$\FF$ is a map from~$\Pow\SSS2$ to itself, then, for $\ii\ge 1$, we denote by~$\FF_\ii$ the (partial) map of~$\SSS^*$ to itself that consists in applying~$\FF$ to the entries in position~$\ii$ and~$\ii+1$. If $\uu= \ii_1 \sep \pdots \sep \ii_{\nn}$ is a finite sequence of positive integers, we write~$\FF_{\uu}$ for the composite map $\FF_{\ii_{\nn}}\comp \pdots \comp \FF_{\ii_1}$ (so $\FF_{\ii_1}$ is applied first).

\ITEM2 If $\SSS$ is a set and $\NM$ is a map from~$\SSS^*$ to itself, we denote by~$\NMbar$ the restriction of~$\NM$ to~$\Pow\SSS2$. 
\end{nota}

Then the main result about the $\Delta$-normal form can be stated as follows:

\begin{prop}\label{P:RecipeD}
Assume that $(\MM, \Delta)$ is a Garside monoid. Then, for every $\Div(\Delta)$-word~$\ww$ of length~$\pp$, there exists a unique $\Delta$-normal word~$\NMD(\ww)$ of length~$\pp$ that represents the same element as~$\ww$. Moreover, one has
\begin{equation}\label{E:RecipeD}
\NMD(\ww) = \NMDbarr{\delta_\pp}(\ww),
\end{equation}
with $\delta_\pp$ inductively defined by $\delta_2 := 1$ and $\delta_\pp := \sh(\delta_{\pp-1}) \sep 1 \sep 2 \sep \pdots \sep \pp{-}1$, where $\sh$ is a shift of all entries by~$+1$.
\end{prop}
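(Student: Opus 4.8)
The plan is to prove the statement by induction on the length $\pp$ of the word $\ww$. First I would establish the base case $\pp = 2$: for a length-two $\Div(\Delta)$-word $\sss_1 \sep \sss_2$, Lemma~\ref{L:PsiD} already provides a unique $\Delta$-normal decomposition of length two of the element $\sss_1 \sss_2$, which is exactly $\NMDbar(\sss_1 \sep \sss_2)$. This matches $\delta_2 = 1$, since applying $\FFF$ at position~$1$ is the single normalisation of the only length-two factor. Uniqueness of a \emph{length-$\pp$} normal word (as opposed to the strict normal decomposition of Prop.~\ref{P:NormalD}) follows because any two $\Delta$-normal words of the same length representing the same element must agree: by Lemma~\ref{L:Head}\,\ITEM2 each entry is determined as $\HH(\sss_\ii \pdots \sss_\pp)$, reading from the left, so the word is forced once the length is fixed.

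The heart of the argument is the inductive step, where I would show that the sequence of positions $\delta_\pp = \sh(\delta_{\pp-1}) \sep 1 \sep 2 \sep \pdots \sep \pp{-}1$ indeed normalises an arbitrary length-$\pp$ word. The idea is the standard ``normalise the tail, then sweep one pass to the left.'' Concretely, writing $\ww = \sss_1 \sep \ww'$ with $\ww'$ of length $\pp-1$, the shifted sequence $\sh(\delta_{\pp-1})$ applies $\delta_{\pp-1}$ to positions $2 \wdots \pp$, that is, to the suffix $\ww'$; by the induction hypothesis this turns $\ww'$ into its $\Delta$-normal form while fixing position~$1$. After this, positions $2 \wdots \pp$ carry a $\Delta$-normal word, but the pair at positions $1,2$ need not be normal. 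The trailing block $1 \sep 2 \sep \pdots \sep \pp{-}1$ then performs a single left-to-right sweep, at each step replacing the pair in positions $(\ii, \ii+1)$ by its normalisation $\FFF$. I would verify that after this sweep the whole word is $\Delta$-normal, which is precisely where the \textbf{domino rule} of Lemma~\ref{L:DominoD} does the work.

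The key point, and the step I expect to be the main obstacle, is proving that the final sweep $1 \sep 2 \sep \pdots \sep \pp{-}1$ preserves the normality of all the pairs to the right that were already established, rather than destroying them. When $\FFF_\ii$ is applied at position~$(\ii,\ii+1)$, it modifies entries $\ii$ and $\ii+1$ by pushing a right-complement rightwards; one must check that the pair $(\ii+1,\ii+2)$, which was normal before the step, remains normal after $\ii+1$ has changed. This is exactly the configuration of Def.~\ref{D:LeftDomino}: the old pair $\sss_{\ii+1} \sep \sss_{\ii+2}$ is normal, the two vertical normalisations $\sss'_{\ii} \sep \ttt_{\ii+1}$ and $\sss'_{\ii+1} \sep \ttt_{\ii+2}$ produced by the local moves are normal, and Lemma~\ref{L:DominoD} concludes that the new pair $\sss'_{\ii+1} \sep \sss'_{\ii+2}$ is normal. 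I would formalise the sweep as a chain of such domino squares propagating down the diagonal, so that after the sweep positions $2 \wdots \pp$ remain normal and position $(1,2)$ has become normal as well. Combined with Lemma~\ref{L:Head}\,\ITEM3 (normality is equivalent to every length-two factor being normal), this gives that $\NMD(\ww)$ is $\Delta$-normal.

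Finally I would note that the length is preserved at every step, since $\FFF$ maps $\Pow{\Div(\Delta)}2$ to itself, so the output has length~$\pp$; and it represents the same element, since each application of $\FFF$ replaces a length-two factor by an equal product. Together with the uniqueness observation from the base-case paragraph, this establishes both existence of $\NMD(\ww)$ and the formula~\eqref{E:RecipeD}. The only genuinely delicate part is bookkeeping the indices in the shift $\sh$ and confirming that the recursive pattern $\delta_\pp$ applies $\delta_{\pp-1}$ to the suffix before the left sweep; everything else reduces to the domino rule and the characterisation of normality through length-two factors.
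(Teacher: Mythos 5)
Your proposal is correct and takes essentially the same route as the paper: the paper likewise reduces everything to the auxiliary claim~\eqref{E:NormalD1} that prepending one letter to an already $\Delta$-normal word and performing a single left-to-right sweep of $\NMDbar$ yields a $\Delta$-normal word, with the chain of domino squares of Fig.~\ref{F:LeftMultD} and Lemma~\ref{L:DominoD} ensuring each new pair of heads is normal, and then concludes by the same induction on~$\pp$, using Lemma~\ref{L:PsiD} locally and Lemma~\ref{L:Head} to pass from length-two factors to the whole word. One wording slip only: after applying the local map at position~$\ii$, the pair at positions $(\ii{+}1,\ii{+}2)$ is in general \emph{not} normal (it is the next one to be processed, consisting of the pushed remainder followed by an old entry); what the domino rule delivers---and what your displayed application of Lemma~\ref{L:DominoD} in fact correctly states---is that the pair at $(\ii,\ii{+}1)$ becomes normal after the step at position~$\ii{+}1$.
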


Thus, for instance, \eqref{E:RecipeD} says that, in order to normalise a $\Div(\Delta)$-word of length~four, we can successively normalise the length-two factors beginning at positions~$3$, $2$, $3$, $1$, $2$, and~$3$, thus in six steps.

\begin{proof}
We begin with an auxiliary result, namely finding a $\Delta$-normal decomposition for a word of the form $\ttt \sep \sss_1 \sep \pdots \sep \sss_\pp$ where $\sss_1 \sep \pdots \sep \sss_\pp$ is $\Delta$-normal, that is, for multiplying a $\Delta$-normal word by one more letter on the left. Then we claim that, putting $\ttt_0 := \ttt$ and, inductively, $\sss'_\ii \sep \ttt_\ii := \NMDbar(\ttt_{\ii-1} \sep \sss_\ii)$ for $\ii = 1 \wdots \pp$, provides a $\Delta$-normal decomposition of length~$\pp+1$ for $\ttt\sss_1 \pdots \sss_\pp$. Indeed, the commutativity of the diagram in Fig.\,\ref{F:LeftMultD} gives the equality $\ttt\sss_1 \pdots \sss_\pp = \sss'_1 \pdots \sss'_\pp\ttt_\pp$ in~$\MM$, and the validity of the left domino rule implies that each pair $\sss'_\ii \sep \sss'_{\ii+1}$ is $\Delta$-normal. In terms of~$\NMDbar$, we deduce the equality
\begin{equation}\label{E:NormalD1}
\NMD(\ttt \sep \ww) = \NMDbarr{1\sep2\sep \pdots \sep \pp}\HS{5.6}(\ttt \sep \ww).
\end{equation}
when $\ww$ is a $\Delta$-normal word of length~$\pp$. From there, \eqref{E:NormalD} follows by a straightforward induction.
\hfill\end{proof}

\begin{figure}[htb]\centering
\begin{picture}(75,15)(0,-1)
\pcline{->}(1,0)(14,0)\tbput{$\sss_1$}
\pcline{->}(16,0)(29,0)\tbput{$\sss_2$}
\pcline[style=etc](32,0)(43,0)
\pcline{->}(46,0)(59,0)\tbput{$\sss_\pp$}
\pcline{->}(1,10)(14,10)\taput{$\sss'_1$}
\pcline{->}(16,10)(29,10)\taput{$\sss'_2$}
\pcline[style=etc](32,10)(43,10)
\pcline{->}(46,10)(59,10)\taput{$\sss'_\pp$}
\pcline{->}(61,10)(74,10)\taput{$\sss'_{\pp+1}$}
\pcline{->}(0,9)(0,1)\tlput{$\ttt = \ttt_0$}
\pcline{->}(15,9)(15,1)\trput{$\ttt_1$}
\pcline{->}(30,9)(30,1)\trput{$\ttt_2$}
\pcline{->}(45,9)(45,1)\trput{$\ttt_{\pp-1}$}
\pcline{->}(60,9)(60,1)\trput{$\ttt_\pp$}
\pcline[style=double](61,0)(68,0)
\pcline[style=double](75,9)(75,7)
\psarc[style=double](68,7){7}{270}{360}
\psarc[style=thin](15,10){3.5}{180}{270}
\psarc[style=thin](30,10){3.5}{180}{270}
\psarc[style=thin](60,10){3.5}{180}{270}
\psarc[style=thin](15,0){3}{180}{360}
\psarc[style=thin](30,0){3}{180}{360}
\psarc[style=thin](45,0){3}{180}{360}
\psarc[style=thinexist](15,10){3}{0}{180}
\psarc[style=thinexist](30,10){3}{0}{180}
\psarc[style=thinexist](45,10){3}{0}{180}
\psarc[style=thin](60,10){3}{0}{180}
\end{picture}
\caption{Left-multiplying a $\Delta$-normal word by an element of~$\Div(\Delta)$: the left domino rule guarantees that the upper row is $\Delta$-normal whenever the lower row is.}
\label{F:LeftMultD}
\end{figure}
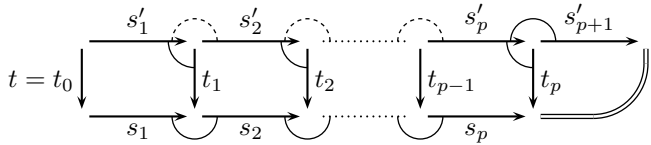

The explicit description of Prop.\,\ref{P:RecipeD} enables one to completely analyse the complexity of the $\Delta$-normal form.

\begin{coro}\label{C:ComplexD}
If $(\MM, \Delta)$ is a Garside monoid, then $\Delta$-normal decompositions can be computed in linear space and quadratic time. The Word Problem for~$\MM$ with respect to~$\Div(\Delta)$ lies in~$\textsc{dtime}(\nn^2)$.
\end{coro}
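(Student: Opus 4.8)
The plan is to read off both assertions directly from the explicit normalisation recipe of Proposition~\ref{P:RecipeD}, treating $\Div(\Delta)$ as a fixed finite alphabet (finiteness is condition~\ITEM4 of Def.\,\ref{D:GarMon}), so that each single application of~$\NMDbar$ to a length-two factor is a table lookup costing $O(1)$ time and space. The whole normalisation of a length-$\pp$ word is then $\NMDbarr{\delta_\pp}$, and the work reduces to counting the entries of~$\delta_\pp$ and bounding the size of the intermediate data.

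First I would establish the space bound and the number of elementary steps. From $\delta_2 := 1$ and $\delta_\pp := \sh(\delta_{\pp-1}) \sep 1 \sep 2 \sep \pdots \sep \pp{-}1$, the length of~$\delta_\pp$ satisfies $\LG{\delta_\pp} = \LG{\delta_{\pp-1}} + (\pp-1)$, whence $\LG{\delta_\pp} = \binom{\pp}{2}$. Thus normalising a length-$\pp$ word takes exactly $\binom{\pp}{2} = O(\pp^2)$ applications of~$\NMDbar$. For the space bound I would note that, since every $\NMDbar$-step merely rewrites two adjacent letters of a length-$\pp$ word over the fixed finite alphabet~$\Div(\Delta)$ into two other letters of that same alphabet, the working word never exceeds length~$\pp$ and each letter occupies $O(1)$ space; hence the computation runs in space $O(\pp)$, i.e. linear space.

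Next I would assemble the time bound. Each of the $\binom{\pp}{2}$ steps computes $\NMDbar(\sss_\ii \sep \sss_{\ii+1})$, which by Lemma~\ref{L:PsiD} is the unique length-two $\Delta$-normal decomposition of~$\sss_\ii\sss_{\ii+1}$; as $\Div(\Delta)$ is finite, this function is a fixed finite table, so each step is $O(1)$. Multiplying, the total running time is $O(\pp^2)$, i.e. quadratic time, which together with the previous paragraph gives the first sentence of the corollary. For the Word Problem assertion, I would observe that two $\Div(\Delta)$-words~$\ww, \ww'$ of total length~$\nn$ represent the same element of~$\MM$ if, and only if, their $\Delta$-normal forms coincide: indeed, by Prop.\,\ref{P:RecipeD} the word $\NMD(\ww)$ represents the same element as~$\ww$, and by Prop.\,\ref{P:NormalD} the (strict) $\Delta$-normal decomposition of an element is unique, so equality of elements is equivalent to equality of the normal words after deleting trailing~$1$'s. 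Computing $\NMD(\ww)$ and $\NMD(\ww')$ costs $O(\nn^2)$ by the first part, and the final comparison of two length-$O(\nn)$ words over the fixed alphabet is $O(\nn)$; hence the Word Problem is solved in time $O(\nn^2)$, placing it in $\textsc{dtime}(\nn^2)$.

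The main point requiring care, rather than a genuine obstacle, is the justification that each $\NMDbar$-step is genuinely constant-time and constant-space: this rests entirely on the finiteness of~$\Div(\Delta)$, which lets one precompute the multiplication-and-normalisation table $\NMDbar : \Pow{\Div(\Delta)}{2} \to \Pow{\Div(\Delta)}{2}$ once and for all, independently of the input word. One should also note that the constants hidden in the $O$-notation depend on~$\MM$ and~$\Delta$ (through $\#\Div(\Delta)$) but not on the input, which is exactly what is meant by the uniform complexity statement; the exponent~$2$ in both the time bound and the $\textsc{dtime}$ class comes solely from the quadratic growth $\LG{\delta_\pp} = \binom{\pp}{2}$ of the normalisation strategy exhibited in Prop.\,\ref{P:RecipeD}.
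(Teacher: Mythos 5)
Your proposal is correct and follows essentially the same route as the paper: precompute the finite table of~$\NMDbar$ (using finiteness of~$\Div(\Delta)$), count $\LG{\delta_\pp} = \pp(\pp-1)/2$ applications via Prop.\,\ref{P:RecipeD} to get quadratic time and linear space, and solve the Word Problem by comparing $\Delta$-normal forms up to trailing letters~$1$. The only difference is that you spell out the space bound and the recursion for~$\LG{\delta_\pp}$ explicitly, which the paper leaves implicit.
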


\begin{proof}
By assumption, the set $\Div(\Delta)$ is finite, so the complete table of the map~$\NMDbar$ can be precomputed, and then each application of~$\NMDbar$ has time cost~$O(1)$ and keeps the length unchanged. Then, as the sequence~$\delta_\pp$ has length $\pp(\pp-1)/2$, Prop.\,\ref{P:RecipeD} implies that a $\Delta$-normal decomposition for an element represented by a $\Div(\Delta)$-word of length~$\pp$ can be obtained in time~$O(\pp^2)$.

Computing $\Delta$-normal decompositions yields a solution of the Word Problem, since two $\Div(\Delta)$-words~$\ww, \ww'$ represent the same element of~$\MM$ if, and only if, the $\Delta$-normal words~$\NMD(\ww)$ and~$\NMD(\ww')$ only differ by the possible adjunction of final letters~$1$.
\end{proof}

On the other hand, a direct application of~\eqref{E:NormalD1} and Fig.\,\ref{F:LeftMultD} is the fact that, viewed as paths in the Cayley graph of~$\MM$ with respect to~$\Div(\Delta)$, the $\Delta$-normal forms of~$\gg$ and~$\ttt\gg$ remain at a uniformly bounded distance, namely at most two. Thus, we can state (see~\cite{HoTh}):

\begin{coro}\label{C:LeftFTPD}
If $(\MM, \Delta)$ is a Garside monoid, then the $\Delta$-normal words satisfiy the $2$-Fellow traveller Property on the left.
\end{coro}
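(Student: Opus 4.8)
The plan is to read the statement off the ladder diagram of Figure~\ref{F:LeftMultD}, which is precisely the picture encoded by Equation~\eqref{E:NormalD1}; no new monoid theory should be needed. Recall the set-up there: if $\sss_1 \sep \pdots \sep \sss_\pp$ is the $\Delta$-normal form of~$\gg$ and $\ttt$ lies in~$\Div(\Delta)$, then normalising $\ttt \sep \sss_1 \sep \pdots \sep \sss_\pp$ yields the $\Delta$-normal form $\sss'_1 \sep \pdots \sep \sss'_{\pp+1}$ of~$\ttt\gg$ together with elements $\ttt = \ttt_0, \ttt_1 \wdots \ttt_\pp$ defined inductively by $\sss'_\ii \sep \ttt_\ii := \NMDbar(\ttt_{\ii-1} \sep \sss_\ii)$. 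By Lemma~\ref{L:PsiD} each~$\ttt_\ii$ again lies in~$\Div(\Delta)$, and each square commutes: $\ttt_{\ii-1}\sss_\ii = \sss'_\ii \ttt_\ii$, so that $\ttt\,\sss_1 \pdots \sss_\ii = \sss'_1 \pdots \sss'_\ii\,\ttt_\ii$ for every~$\ii$.

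Next I would interpret the two rows of the diagram as two paths in the Cayley graph of~$\MM$ with respect to~$\Div(\Delta)$, both issued from~$1$ and both ending at~$\ttt\gg$, and both of length~$\pp+1$: the upper path is the normal form of~$\ttt\gg$, visiting the vertices $\sss'_1 \pdots \sss'_\ii$, while the lower path is~$\ttt$ followed by the normal form of~$\gg$, visiting the vertices $\ttt\,\sss_1 \pdots \sss_\ii$. The commuting squares say that the $\ii$th rung $\ttt_\ii$ joins $\sss'_1 \pdots \sss'_\ii$ to $\ttt\,\sss_1 \pdots \sss_\ii$ by a single edge, so that vertices carrying the \emph{same} index are at distance at most one. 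Comparing the two paths \emph{position by position} then introduces exactly the one-step offset caused by the letter~$\ttt$ prepended on the lower side: the vertex of the lower path at position~$\ii$ is $\ttt\,\sss_1 \pdots \sss_{\ii-1}$, which reaches the corresponding upper vertex $\sss'_1 \pdots \sss'_\ii$ through one rung $\ttt_{\ii-1}$ (distance $\le 1$) followed by one horizontal edge $\sss'_\ii$ (distance $\le 1$). Summing these two contributions gives the uniform bound~two, which is precisely the $2$-fellow traveller property on the left.

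The hard part will be purely the bookkeeping of the synchronisation, not any genuine obstacle: one must carefully align the length-$\pp$ normal form of~$\gg$, shifted by the prepended~$\ttt$, with the length-$(\pp+1)$ normal form of~$\ttt\gg$, pad with final trivial entries, and confirm that at each index the two distinguished vertices differ by one rung composed with at most one horizontal edge. The entire mathematical content is already in place: the validity of the left domino rule (Lemma~\ref{L:DominoD}) is what makes the upper row $\Delta$-normal, and the recipe~\eqref{E:NormalD1} is what guarantees that every rung~$\ttt_\ii$ is an \emph{honest} element of~$\Div(\Delta)$, hence a single edge, rather than a long word. Thus the only real decision is conceptual — fixing once and for all which two paths are being compared — after which the constant~two drops out of the diagram immediately.
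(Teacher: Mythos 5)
Your proposal is correct and is essentially the paper's own argument: the paper obtains Cor.~\ref{C:LeftFTPD} as ``a direct application of~\eqref{E:NormalD1} and Fig.\,\ref{F:LeftMultD}'', i.e.\ by reading the ladder diagram exactly as you do, with the rungs~$\ttt_\ii$ (which lie in~$\Div(\Delta)$ by Lemma~\ref{L:PsiD}) giving single edges and the one-position offset from the prepended letter~$\ttt$ yielding the constant two. You merely spell out the synchronisation bookkeeping that the paper leaves implicit, so there is nothing to correct.
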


\subsection{The right counterpart}\label{SS:RightMultD}

Owing to Prop.\,\ref{P:RecipeD} and its normalisation recipe based on left-multiplication, the question naturally arises of a similar recipe based on right-multiplication, hence based on computing a $\Delta$-normal decomposition of~$\gg\ttt$ from one of~$\gg$. Such a recipe does exist, but this is not obvious, because the definition of $\Delta$-normality is not symmetric.

\begin{prop}\label{P:RecipeRD}
Assume that $(\MM, \Delta)$ is a Garside monoid. Then, for every $\Div(\Delta)$-word~$\ww$ of length~$\pp$, one also has
\begin{equation}\label{E:RecipeRD}
\NMD(\ww) = \NMDbarr{\widetilde\delta_\pp}(\ww),\VR(4,1)
\end{equation}
with $\widetilde\delta_\pp$ inductively defined by $\widetilde\delta_2 := 1$ and $\widetilde\delta_\pp := \widetilde\delta_{\pp-1} \sep \pp{-}1 \sep \pdots \sep 2 \sep 1$.
\end{prop}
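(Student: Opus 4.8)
The plan is to mirror the proof of Proposition \ref{P:RecipeD}, systematically replacing left-multiplication by right-multiplication. Proposition \ref{P:RecipeD} already provides a \emph{unique} $\Delta$-normal decomposition of any prescribed length, and each application of $\NMDbar$ preserves both the represented element and the length, so it suffices to prove that $\NMDbarr{\widetilde\delta_\pp}(\ww)$ is $\Delta$-normal; its equality with $\NMD(\ww)$ then follows from uniqueness. I would argue by induction on $\pp$, exploiting the recursive shape $\widetilde\delta_\pp = \widetilde\delta_{\pp-1}\sep\pp{-}1\sep\pdots\sep1$. Writing $\ww = \vv\sep\ttt$ with $\ttt$ the last letter, the block $\widetilde\delta_{\pp-1}$ uses only positions $\le\pp{-}2$, hence touches only the first $\pp{-}1$ entries; by the induction hypothesis it turns $\ww$ into $\NMD(\vv)\sep\ttt$, a $\Delta$-normal word of length $\pp{-}1$ followed by one extra letter. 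The remaining block $\pp{-}1\sep\pdots\sep1$ is a right-to-left sweep, so everything reduces to the following right-multiplication analogue of \eqref{E:NormalD1}: \emph{if $\sss_1\sep\pdots\sep\sss_\mm$ is $\Delta$-normal and $\ttt$ lies in $\Div(\Delta)$, then $\NMDbarr{\mm\sep\pdots\sep1}(\sss_1\sep\pdots\sep\sss_\mm\sep\ttt)$ is $\Delta$-normal.}

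To prove this auxiliary statement I would use the diagram obtained from Figure \ref{F:LeftMultD} by exchanging its top and bottom rows: the $\Delta$-normal word $\sss_1\sep\pdots\sep\sss_\mm$ sits on top, the appended letter $\ttt$ is the rightmost vertical edge, and processing the squares from right to left the map $\FF_\ii$ realises $\sss_\ii\ttt_\ii = \ttt_{\ii-1}\sss'_\ii$ with $\ttt_{\ii-1} := \HH(\sss_\ii\ttt_\ii)$, so that each L-shaped pair $\ttt_{\ii-1}\sep\sss'_\ii$ is $\Delta$-normal. A direct check (as for $\mm=1,2$) shows this reproduces exactly $\NMDbarr{\mm\sep\pdots\sep1}$ and that the result is the bottom row $\ttt_0\sep\sss'_1\sep\pdots\sep\sss'_\mm$, which by commutativity of the diagram represents the same element. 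The first pair $\ttt_0\sep\sss'_1$ is $\Delta$-normal, being an output of $\HH$; so the only thing left is that each consecutive pair $\sss'_\ii\sep\sss'_{\ii+1}$ is $\Delta$-normal. This is precisely the conclusion of a \emph{right} domino rule: from $\sss_\ii\sep\sss_{\ii+1}$ $\Delta$-normal (top), $\ttt_{\ii-1}\sep\sss'_\ii$ and $\ttt_\ii\sep\sss'_{\ii+1}$ $\Delta$-normal (the two L-shapes), together with $\sss_\ii\ttt_\ii = \ttt_{\ii-1}\sss'_\ii$ and $\sss_{\ii+1}\ttt_{\ii+1} = \ttt_\ii\sss'_{\ii+1}$, one must deduce that $\sss'_\ii\sep\sss'_{\ii+1}$ is $\Delta$-normal.

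The main obstacle is exactly the validity of this right domino rule, and it is genuinely more delicate than Lemma \ref{L:DominoD}. Since $\Delta$-normality is defined through the \emph{left}-gcd with $\Delta$, it is not symmetric: reversing words turns left-$\Delta$-normality into right-$\Delta$-normality, so the right domino rule is \emph{not} a formal mirror of Lemma \ref{L:DominoD} and must be reproved by hand. I would follow the pattern of Lemma \ref{L:DominoD}: take $\sss\in\Div(\Delta)$ with $\sss\dive\sss'_\ii\sss'_{\ii+1}$, use the squares to cross over to the top row via the identity $\ttt_{\ii-1}\sss'_\ii\sss'_{\ii+1} = \sss_\ii\sss_{\ii+1}\ttt_{\ii+1}$, extract a divisor of $\sss_\ii\sss_{\ii+1}$ by a complement, exploit the normality of $\sss_\ii\sep\sss_{\ii+1}$ and then that of the two L-shapes to push it back down to $\sss'_\ii$. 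The point that makes this possible is that $\Delta$ is a \emph{two-sided} Garside element, so that $\Div(\Delta)$ is closed under the relevant complement operation (the right-lcm of two divisors of $\Delta$ again divides $\Delta$), exactly as in the proofs of Lemmas \ref{L:Head} and \ref{L:DominoD}; the care needed is entirely in performing the extraction on the correct side. It is worth recording a warning that shows why the full square-plus-L-shape hypothesis is essential: the naive shortcut ``replacing the first entry of a $\Delta$-normal pair by a right-divisor preserves normality'' is \emph{false} — in $\BP3$ the word $\Delta\sep\sig1$ is $\Delta$-normal while $\sig2\sep\sig1$ is not, although $\sig2$ is a right-divisor of $\Delta$. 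Once the right domino rule is established, the auxiliary statement and hence the induction close, yielding \eqref{E:RecipeRD}.
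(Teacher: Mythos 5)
Your global architecture is exactly the paper's: you reduce, via the recursive shape of $\widetilde\delta_\pp$, to the one-letter right-multiplication statement \eqref{E:NormalRD1}, set up the two-row diagram of squares with the normal word on top and the appended letter as rightmost vertical edge, and correctly isolate the right domino rule (Def.\,\ref{D:RightDomino}) as the one point to be proved, rightly observing that it is not a formal mirror of Lemma~\ref{L:DominoD}. Your reduction steps (uniqueness from Prop.\,\ref{P:RecipeD}, length and element preservation under $\NMDbar$, normality of the pairs $\ttt_{\ii-1} \sep \sss'_\ii$ by construction) are all sound, and your warning example in $\BP3$ is correct.

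The gap is in your sketch of the right domino rule itself, which is the entire technical content of the proposition. Crossing to the top row via $\ttt_0 \sss'_1 \sss'_2 = \sss_1 \sss_2 \ttt_2$ leaves the residual letter $\ttt_2$ as a \emph{right} factor, and the complement operation only strips factors appearing on the \emph{left}: from $\sss \dive \ttt_0 \sss_1 \sss_2$ one gets $\ttt_0 \under \sss \dive \sss_1 \sss_2$, as in Lemma~\ref{L:DominoD}, but from $\ttt_0\sss \dive \sss_1 \sss_2 \ttt_2$ no tool extracts a $\Div(\Delta)$-divisor of $\sss_1\sss_2$ --- $\Delta$-normality of $\sss_1 \sep \sss_2$ controls left-divisors of $\ff\sss_1\sss_2$, not of $\sss_1\sss_2\ttt_2$. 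So ``extract a divisor of $\sss_\ii\sss_{\ii+1}$ by a complement'' is not an available step, and ``performing the extraction on the correct side'' names the obstruction without resolving it. The paper's proof of Lemma~\ref{L:DominoRD} needs a genuinely new ingredient absent from your proposal: the map $\partial$ on $\Div(\Delta)$ defined by $\sss\,\partial\sss = \Delta$ and the automorphism $\phi = \partial^2$, which satisfies $\sss\Delta = \Delta\phi(\sss)$ (that $\phi$ is an automorphism itself requires proof). One extends the diagram one row \emph{downward} by $\ttt'_\ii := \partial\ttt_\ii$, giving $\sss'_1 \ttt'_1 = \ttt'_0\phi(\sss_1)$ and $\sss'_2 \ttt'_2 = \ttt'_1\phi(\sss_2)$, so that the residuals reappear on the left while normality of $\sss_1 \sep \sss_2$ transfers to $\phi(\sss_1) \sep \phi(\sss_2)$; then, with the witness $\sss' := \sss'_1 \under \sss$, one shows both $\sss' \dive \sss'_2$ and $\sss' \dive \ttt'_1$, whence $\ttt_1\sss' \dive \ttt_1\ttt'_1 = \Delta$, and the normality of $\ttt_1 \sep \sss'_2$ forces $\sss' = 1$. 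Without the conjugation trick and the dual complements $\ttt'_\ii$, the sweep argument does not close, so your induction, as written, remains incomplete at its only nontrivial point.
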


For instance, \eqref{E:RecipeRD} says that, in order to normalise a $\Div(\Delta)$-word of length four, we can successively normalise the length-two factors beginning at positions~$1$, $2$, $1$, $3$, $2$, and~$1$, in six steps as in~\eqref{E:RecipeD}, but in a different order.

As can be expected, the proof of Prop.\,\ref{P:RecipeRD} relies on a symmetric counterpart of the left domino rule of Def.\,\ref{D:LeftDomino}.

\begin{defi}\label{D:RightDomino}
\rm Assume that $\MM$ is a left-cancellative monoid, $\SSS$ is a subset of~$\MM$, and $\FFF$ is a family of $\SSS$-words of length~two. We say that the \emph{right domino rule is valid for~$\FFF$} if, whenever $\sss_1, \sss_2, \sss'_1, \sss'_2, \ttt_0, \ttt_1, \ttt_2$ lie in~$\SSS$ and $\ttt_0\sss'_1 = \sss_1 \ttt_1$ and $\ttt_1\sss'_2 = \sss_2 \ttt_2$ hold in~$\MM$, then the assumption that $\sss_1 \sep \sss_2$, $\ttt_0 \sep \sss'_1$, and $\ttt_1 \sep \sss'_2$ lie in~$\FFF$ implies that $\sss'_1 \sep \sss'_2$ lies in~$\FFF$ as well.
\end{defi}

\noindent\begin{minipage}{\linewidth}
\rightskip42mm
The \VR(3.2,0) right domino rule corresponds to the diagram on the right: the solid arcs are the assumptions, namely that $\ttt_0 \sep \sss'_1$, $\ttt_1 \sep \sss'_2$ and $\sss_1 \sep \sss_2$ lie in~$\FFF$, and the dotted arc is the expected conclusion, namely that $\sss'_1 \sep \sss'_2$ does.\hfill%
\begin{picture}(0,0)(-7,-2.5)
\psarc[style=thinexist](15,0){3}{180}{360}
\psarc[style=thin](0,0){3.5}{0}{90}
\psarc[style=thin](15,0){3.5}{0}{90}
\psarc[style=thin](15,10){3}{0}{180}
\pcline{->}(1,0)(14,0)\tbput{$\sss'_1$}
\pcline{->}(16,0)(29,0)\tbput{$\sss'_2$}
\pcline{->}(1,10)(14,10)\taput{$\sss_1$}
\pcline{->}(16,10)(29,10)\taput{$\sss_2$}
\pcline{->}(0,9)(0,1)\tlput{$\ttt_0$}
\pcline{->}(15,9)(15,1)\trput{$\ttt_1$}
\pcline{->}(30,9)(30,1)\trput{$\ttt_2$}
\end{picture}\par
\end{minipage}

Then \VR(3.2,0) we have the counterpart of Lemma~\ref{L:DominoD}. Observe that the argument is totally different, reflecting the lack of symmetry in the definition of $\Delta$-normality.

\pagebreak

\begin{lemm}\label{L:DominoRD}
If $(\MM, \Delta)$ is a Garside monoid, then the right domino rule is valid for $\Delta$-normal words of length~two.
\end{lemm}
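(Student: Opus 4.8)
The plan is to verify the head‑characterisation of normality from Lemma~\ref{L:Head}: I will show that $\sss'_1 = \HH(\sss'_1\sss'_2)$, i.e.\ that every $\sss\in\Div(\Delta)$ with $\sss\dive\sss'_1\sss'_2$ already satisfies $\sss\dive\sss'_1$. As in the proof of Lemma~\ref{L:Head}, I would first translate this into the right‑complement language: setting $\qq := \sss'_1\under\sss$, the complement law~\eqref{E:RrightCompl} gives $(\sss'_1\sss'_2)\under\sss = \qq\under\sss'_2$, so the hypothesis $\sss\dive\sss'_1\sss'_2$ is equivalent to $\qq\dive\sss'_2$, while the desired conclusion $\sss\dive\sss'_1$ is equivalent to $\qq = 1$. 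Moreover $\qq$, being a right‑divisor of the right‑lcm of $\sss'_1$ and $\sss$ (which divides~$\Delta$), again lies in $\Div(\Delta)$. So everything reduces to proving that such a $\qq\in\Div(\Delta)$ with $\qq\dive\sss'_2$ is trivial.

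The first real step is the observation that it suffices to establish $\ttt_1\qq\dive\Delta$. Indeed, $\qq\dive\sss'_2$ gives $\ttt_1\qq\dive\ttt_1\sss'_2$; if in addition $\ttt_1\qq\dive\Delta$, then the normality of $\ttt_1\sep\sss'_2$, read through characterisation~(ii) of Lemma~\ref{L:Head} as $\ttt_1 = \HH(\ttt_1\sss'_2)$, forces $\ttt_1\qq\dive\ttt_1$, whence $\qq = 1$ by left‑cancellation. This is where the hypothesis on $\ttt_1\sep\sss'_2$ gets consumed.

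Next I would unfold the left square to bring in the other two hypotheses. Normality of $\ttt_0\sep\sss'_1$ gives $\ttt_0 = \HH(\ttt_0\sss'_1) = \HH(\sss_1\ttt_1)$ via the relation $\ttt_0\sss'_1 = \sss_1\ttt_1$; since $\sss_1$ divides both $\Delta$ and $\sss_1\ttt_1$, this yields $\sss_1\dive\ttt_0$. Writing $\ttt_0 = \sss_1\cc$ and cancelling $\sss_1$ in the square gives $\ttt_1 = \cc\sss'_1$ with $\cc\in\Div(\Delta)$. Consequently $\ttt_1\qq = \cc\sss'_1\qq = \cc\cdot(\text{the right-lcm of }\sss'_1\text{ and }\sss)$, and feeding $\qq\dive\sss'_2$ through the right square $\ttt_1\sss'_2 = \sss_2\ttt_2$ shows $\ttt_1\qq\dive\cc\sss'_1\sss'_2 = \sss_2\ttt_2$.

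The crux — and the step I expect to be the main obstacle — is to upgrade $\ttt_1\qq\dive\sss_2\ttt_2$ to $\ttt_1\qq\dive\Delta$ using the last hypothesis, that $\sss_1\sep\sss_2$ is normal. Here the symmetric manipulation of Lemma~\ref{L:DominoD} is unavailable, precisely because $\Delta$‑normality is a \emph{left}‑gcd notion while we are now propagating along the right: a purely formal chase in the style of Lemma~\ref{L:DominoD} keeps returning to equivalent reformulations of $\ttt_1\qq\dive\Delta$ without ever injecting $\HH(\sss_1\sss_2)=\sss_1$. The natural way to break this circle is to invoke the Garside duality carried by the complement‑in‑$\Delta$ map $\sss\mapsto\partial\sss$ (with $\sss\,\partial\sss=\Delta$), which is a lattice anti‑automorphism of $\Div(\Delta)$ exchanging left‑ and right‑divisibility and turning right‑lcms into left‑gcds; under it the normality $\HH(\sss_1\sss_2)=\sss_1$ converts into exactly the divisibility $\ttt_1\qq\dive\Delta$ required above. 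Equivalently, applying $\partial$ and reversing the diagram should recast the whole right‑domino configuration as a left‑domino configuration in the opposite monoid (again Garside with the same~$\Delta$), so that the statement drops out of Lemma~\ref{L:DominoD}. I would present whichever of these two routes needs the fewest extra prerequisites; in either case the genuine difficulty, entirely absent from the left case, is concentrated in this single left/right exchange.
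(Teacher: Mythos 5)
Your reductions are correct, and they in fact reproduce the endgame of the paper's own proof: setting $\qq := \sss'_1\under\sss$, translating the hypothesis into $\qq\dive\sss'_2$ and the goal into $\qq=1$, checking $\qq\in\Div(\Delta)$, and observing that $\ttt_1\qq\dive\Delta$ combined with the normality of $\ttt_1\sep\sss'_2$ forces $\qq=1$ is exactly how the paper concludes. But the crux you isolate --- proving $\ttt_1\qq\dive\Delta$ --- is a genuine gap, and neither of your two proposed routes works as described. The first route claims that under the duality $\partial$ the normality $\HH(\sss_1\sss_2)=\sss_1$ ``converts into exactly'' the divisibility $\ttt_1\qq\dive\Delta$. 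It cannot: $\partial$ is an anti-isomorphism of the lattice $\Div(\Delta)$ only, so it does not act on products such as $\sss_1\sss_2$ or $\ttt_1\sss'_2$, which generally lie outside $\Div(\Delta)$; moreover $\ttt_1\qq\dive\Delta$ involves the auxiliary element $\qq$ depending on $\sss$, so no single application of $\partial$ to a statement about $\sss_1,\sss_2$ alone can yield it. The second route --- reverse the diagram and apply Lemma~\ref{L:DominoD} in $\MM^{\mathrm{op}}$ --- founders on the asymmetry the paper explicitly flags just before the lemma. Indeed $(\MM^{\mathrm{op}},\Delta)$ is again a Garside monoid and the reversed squares do form a left-domino configuration there, but the hypotheses Lemma~\ref{L:DominoD} would require in $\MM^{\mathrm{op}}$ are \emph{right}-greedy conditions in~$\MM$ (for instance, that $\sss'_2$ be the greatest right-divisor of $\ttt_1\sss'_2$ lying in $\Div(\Delta)$), whereas you are given the left-greedy conditions $\ttt_1=\HH(\ttt_1\sss'_2)$, etc.; left-greedy words are not right-greedy in general, so the hypotheses do not transport.

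What actually closes the gap in the paper is your duality intuition implemented through the \emph{automorphism} $\phi:=\partial^2$ rather than through $\partial$ alone: one sets $\ttt'_\ii:=\partial\ttt_\ii$, proves $\sss\Delta=\Delta\phi(\sss)$ and that $\phi$ extends to an automorphism of~$\MM$ (so $\phi(\sss_1)\sep\phi(\sss_2)$ is again $\Delta$-normal), and then left-cancellation in $\sss_\ii\Delta=\Delta\phi(\sss_\ii)$ produces a \emph{third row} of commutative squares $\sss'_1\ttt'_1=\ttt'_0\phi(\sss_1)$ and $\sss'_2\ttt'_2=\ttt'_1\phi(\sss_2)$ beneath the given ones. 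A complement chase then delivers your missing divisibility: $\sss\dive\sss'_1\sss'_2\ttt'_2=\ttt'_0\phi(\sss_1)\phi(\sss_2)$ gives $\ttt'_0\under\sss\dive\phi(\sss_1)\phi(\sss_2)$ with $\ttt'_0\under\sss\in\Div(\Delta)$; the normality of $\phi(\sss_1)\sep\phi(\sss_2)$ --- this is where $\sss_1\sep\sss_2$ gets consumed --- yields $\ttt'_0\under\sss\dive\phi(\sss_1)$, whence $\sss\dive\ttt'_0\phi(\sss_1)=\sss'_1\ttt'_1$, so $\qq\dive\ttt'_1$, that is, $\ttt_1\qq\dive\ttt_1\ttt'_1=\Delta$. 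Incidentally, your left-square analysis ($\sss_1\dive\ttt_0$, $\ttt_1=\cc\sss'_1$), while correct, is a dead end: the paper's argument never uses the normality of $\ttt_0\sep\sss'_1$ at all.
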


\begin{proof}[Proof (sketch)]
For~$\sss$ in~$\Div(\Delta)$, let $\partial\sss$ be the element of~$\Div(\Delta)$ satisfying $\sss \partial\sss = \Delta$, and let $\phi:= \partial^2$. Then, $\sss \Delta = \Delta \phi(\sss)$ holds for every~$\sss$ in~$\Div(\Delta)$, and one shows that $\phi$ extends to an automorphism of~$\MM$. It follows, in particular, that $\sss_1 \sep \sss_2$ being $\Delta$-normal implies that $\phi(\sss_1) \sep \phi(\sss_2)$ is $\Delta$-normal as well. By assumption, there exist $\ttt'_0, \ttt'_1, \ttt'_2$ satisfying $\ttt_0 \ttt'_0 = \ttt_1 \ttt'_1 = \ttt_2 \ttt_2' = \Delta$. By the above equality, we have $\sss_1 \Delta = \Delta \phi(\sss_1)$ and $\sss_2 \Delta = \Delta \phi(\sss_2)$, whence, by left-cancellation, $\sss'_1 \ttt'_1 = \ttt'_0 \phi(\sss_1)$ and $\sss'_2 \ttt'_2 = \ttt'_1 \phi(\sss_1)$. Thus the diagram below is commutative. Assume $\sss \dive \sss'_1 \sss'_2$ with $\sss$ in~$\Div(\Delta)$. Let $\sss' :=\nobreak \sss'_1 \under \sss$. Our aim is to prove that $\sss$ left-divides~$\sss'_1$, that is, that $\sss'$ is~$1$. 

\noindent\begin{minipage}{\linewidth}  
\rightskip42mm
The \VR(3.2,0) assumption that $\sss$ left-divides~$\sss'_1 \sss'_2$ implies $\sss' \dive \sss'_2$, whence $\ttt_1\sss' \dive \ttt_1 \sss'_2$. On the other hand, the assumption $\sss \dive \sss'_1 \sss'_2$ implies a fortiori $\sss \dive \sss'_1 \sss'_2 \ttt'_2$, that is, $\sss \dive \ttt'_0 \phi(\sss_1) \phi(\sss_2)$ and, therefore, $\ttt'_0 \under \sss \dive \phi(\sss_1) \phi(\sss_2)$. As $\ttt'_0 \under \sss$ lies in~$\Div(\Delta)$ and $\phi(\sss_1) \sep \phi(\sss_2)$ is $\Delta$-normal, we deduce $\ttt'_0 \under \sss \dive \phi(\sss_1) \phi(\sss_2)$, whence $\sss \dive \ttt'_0 \phi(\sss_1)$, which is also%
\hfill\begin{picture}(0,0)(-7,-5)
\psarc[style=thinexist](15,10){3}{180}{360}
\psarc[style=thin](0,10){3.5}{0}{90}
\psarc[style=thin](15,10){3.5}{0}{90}
\psarc[style=thin](15,20){3}{0}{180}
\pcline{->}(1,0)(14,0)\tbput{$\phi(\sss_1)$}
\pcline{->}(16,0)(29,0)\tbput{$\phi(\sss_2)$}
\pcline{->}(1,10)(14,10)\taput{$\sss'_1$}
\pcline{->}(16,10)(29,10)\taput{$\sss'_2$}
\pcline{->}(1,20)(14,20)\taput{$\sss_1$}
\pcline{->}(16,20)(29,20)\taput{$\sss_2$}
\pcline{->}(0,9)(0,1)\tlput{$\ttt'_0$}
\pcline{->}(15,9)(15,1)\trput{$\ttt'_1$}
\pcline{->}(30,9)(30,1)\trput{$\ttt'_2$}
\pcline{->}(0,19)(0,11)\tlput{$\ttt_0$}
\pcline{->}(15,19)(15,11)\trput{$\ttt_1$}
\pcline{->}(30,19)(30,11)\trput{$\ttt_2$}
\end{picture}\par
\end{minipage}
$\sss \dive \sss'_1 \ttt'_1$. We \VR(3.2,0) deduce $\sss' \dive \ttt'_1$, and, therefore, $\ttt_1 \sss' \dive \ttt_1 \ttt'_1 = \Delta$. Thus $\ttt_1 \sss'$ lies in~$\Div(\Delta)$ and it left-divides~$\ttt_1 \sss'_2$. By assumption, $\ttt_1 \sep \sss'_2$ is $\Delta$-normal, so we deduce that $\ttt_1 \sss'$ left-divides~$\ttt_1$, implying $\sss' = 1$, as expected. Hence, $\sss'_1 \sep \sss'_2$ is $\Delta$-normal, the right domino rule is valid.\hfill
\end{proof}

We can easily complete the argument.

\begin{proof}[Proof of Prop.\,\ref{P:RecipeRD}]
The argument is symmetric of the one for Prop.\,\ref{P:RecipeD}. It consists in establishing for $\ww$ a $\Delta$-normal word of length~$\pp$ the equality
\begin{equation}\label{E:NormalRD1}
\NMD(\ww \sep \ttt) = \NMDbarr{\pp\sep\pp-1\sep \pdots \sep 1}\HS{8}(\ww).
\end{equation}
The latter immediately follows from the diagram
$$\begin{picture}(75,16)(-15,-2)
\pcline{->}(-14,0)(-1,0)\tbput{$\sss'_0$}
\pcline{->}(1,0)(14,0)\tbput{$\sss'_1$}
\pcline{->}(31,0)(44,0)\tbput{$\sss'_{\pp-1}$}
\pcline[style=etc](17,0)(28,0)
\pcline{->}(46,0)(59,0)\tbput{$\sss'_\pp$}
\pcline{->}(1,10)(14,10)\taput{$\sss_1$}
\pcline{->}(31,10)(44,10)\taput{$\sss_{\pp-1}$}
\pcline[style=etc](17,10)(28,10)
\pcline{->}(46,10)(59,10)\taput{$\sss_\pp$}
\pcline{->}(0,9)(0,1)\put(-3.5,5){$\ttt_0$}
\pcline{->}(15,9)(15,1)\put(16,5){$\ttt_1$}
\pcline{->}(30,9)(30,1)\put(31,5){$\ttt_2$}
\pcline{->}(45,9)(45,1)\put(46,5){$\ttt_{\pp-1}$}
\pcline{->}(60,9)(60,1)\put(61,5){$\ttt_\pp = \ttt$}
\pcline[style=double](-1,10)(-8,10)
\pcline[style=double](-15,3)(-15,1)
\psarc[style=double](-8,3){7}{90}{180}
\psarc[style=thin](30,0){3.5}{0}{90}
\psarc[style=thin](45,0){3.5}{0}{90}
\psarc[style=thin](0,0){3.5}{0}{90}
\psarc[style=thinexist](0,0){3}{180}{360}
\psarc[style=thinexist](15,0){3}{180}{360}
\psarc[style=thinexist](30,0){3}{180}{360}
\psarc[style=thinexist](45,0){3}{180}{360}
\psarc[style=thin](15,10){3}{0}{180}
\psarc[style=thin](30,10){3}{0}{180}
\psarc[style=thin](45,10){3}{0}{180}
\end{picture}$$
whose validity is guaranteed by the right domino rule.
\hfill\end{proof}

We deduce a counterpart of Cor.\,\ref{C:LeftFTPD}:

\begin{coro}\label{C:RightFTPD}
Assume that $(\MM, \Delta)$ is a Garside monoid. Then $\Delta$-normal words satisfy the $2$-Fellow traveller Property on the right.
\end{coro}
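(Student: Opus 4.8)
The plan is to argue exactly as for Corollary~\ref{C:LeftFTPD}, but reading the right-multiplication recipe of Proposition~\ref{P:RecipeRD} in place of the left one. Recall that the $2$-Fellow traveller Property on the right asserts that, for every~$\gg$ in~$\MM$ and every~$\ttt$ in~$\Div(\Delta)$, the $\Delta$-normal forms of~$\gg$ and of~$\gg\ttt$, viewed as paths in the Cayley graph of~$\MM$ with respect to~$\Div(\Delta)$ and read synchronously from their common origin~$1$, stay within distance~$2$ at every step. All the substantial work has already been done: the asymmetry of $\Delta$-normality is exactly what Lemma~\ref{L:DominoRD} overcomes, so that once the right domino rule is available the corollary becomes a routine geometric reading of the commutative diagram used to prove~\eqref{E:NormalRD1}.

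Concretely, I would fix~$\gg$ with strict $\Delta$-normal form $\sss_1 \sep \pdots \sep \sss_\pp$ and fix~$\ttt$ in~$\Div(\Delta)$. The diagram establishing~\eqref{E:NormalRD1} produces elements $\ttt_0 \wdots \ttt_\pp$ and $\sss'_0 \wdots \sss'_\pp$ of~$\Div(\Delta)$, with $\ttt_\pp = \ttt$, such that the top row $\sss_1 \sep \pdots \sep \sss_\pp$ is the normal form of~$\gg$, the bottom row $\sss'_0 \sep \pdots \sep \sss'_\pp$ is the normal form of~$\gg\ttt$, and the vertical edges~$\ttt_\ii$ make every square commute. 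Writing $U_\ii := \sss_1 \pdots \sss_\ii$ for the $\ii$th vertex of the first path and reading off the squares from left to right, commutativity yields $U_\ii\, \ttt_\ii = \sss'_0 \pdots \sss'_\ii$; in particular the length-$\jj$ prefix of the normal form of~$\gg\ttt$ equals $U_{\jj-1}\, \ttt_{\jj-1}$.

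Since each~$\ttt_\ii$ lies in~$\Div(\Delta)$, it is a single edge of the Cayley graph. Hence, at step~$\jj$, the vertex~$U_\jj$ reached by the normal form of~$\gg$ and the vertex $U_{\jj-1}\, \ttt_{\jj-1}$ reached by the normal form of~$\gg\ttt$ are both at distance at most~$1$ from~$U_{\jj-1}$ --- the former along the edge~$\sss_\jj$, the latter along the edge~$\ttt_{\jj-1}$ --- so they lie within distance~$2$ of each other by the triangle inequality. The length mismatch (the normal form of~$\gg\ttt$ being one letter longer) and the one-position index shift between the two rows are absorbed by padding the shorter word with trivial final entries, exactly as in the left case; at the last step the two paths differ only by the single edge~$\ttt_\pp = \ttt$. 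This gives the $2$-bound at every step and proves the property.

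I do not expect a genuine obstacle here. The only points requiring a little care are the bookkeeping of the index shift between the two rows together with the endpoint padding, and the observation that the vertical connectors~$\ttt_\ii$ are genuine $\Div(\Delta)$-letters rather than arbitrary elements of~$\MM$; both are guaranteed by the construction in Proposition~\ref{P:RecipeRD}. The whole argument is thus the mirror image of the one sketched after Corollary~\ref{C:LeftFTPD}, the real content having been isolated in the (nonsymmetric) proof of Lemma~\ref{L:DominoRD}.
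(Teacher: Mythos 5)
Your proof is correct and follows exactly the route the paper intends: Corollary~\ref{C:RightFTPD} is deduced from the commutative diagram establishing~\eqref{E:NormalRD1}, whose vertical connectors~$\ttt_\ii$ lie in~$\Div(\Delta)$ thanks to the right domino rule (Lemma~\ref{L:DominoRD}), and the synchronous $2$-bound then follows by the triangle inequality through~$U_{\jj-1}$, with trailing trivial letters absorbing the length mismatch. Your bookkeeping ($V_\jj = U_{\jj-1}\ttt_{\jj-1}$, hence $d(U_\jj, V_\jj) \le 2$) is exactly the "routine geometric reading" the paper leaves implicit when it writes "we deduce a counterpart of Cor.~\ref{C:LeftFTPD}".
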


\begin{rema}
\rm As in Subsections~\ref{SS:Abelian} and~\ref{SS:Braids}, the $\Delta$-normal decompositions associated with a Garside monoid~$(\MM, \Delta)$ extend to the group of fractions of~$\MM$. It directly follows from the definition that $\MM$ satisfies the Ore conditions (cancellativity and existence of common right-multiples), hence embeds in a group of (left) fractions~$\GG$ (then called a \emph{Garside group}). Then every element of~$\GG$ admits a unique decomposition of the form $\Delta^\mm \sep \sss_1 \sep \pdots \sep \sss_\pp$ where $\sss_1 \sep \pdots \sep \sss_\pp$ is $\Delta$-normal and, in addition, we require $\sss_1 \not= \Delta$ (that is, $\mm$ is maximal) and $\sss_\pp \not= 1$ (that is, $\pp$ is minimal). It is easy to deduce from Prop.\,\ref{P:Regular} and Cor.\,\ref{C:LeftFTPD} and~\ref{C:RightFTPD} that the $\Delta$-normal form provides a biautomatic structure for~$\GG$ (in the sense of~\cite{Eps}).
\end{rema}

\section{The $\SSS$-normal form associated with a Garside family}\label{S:NormalS}

Looking at the mechanism of the $\Delta$-normal form associated with a Garside monoid invites to a further extension. Indeed, one quickly sees that several assumptions in the definition of a Garside monoid are not used in the construction of a normal form that obeys the recipe of Prop.\,\ref{P:RecipeD}. This easy observation, and the need of using decompositions similar to $\Delta$-normal ones in more general situations, led to introducing the notion of a \emph{Garside family}~\cite{Dif}, extensively developed in the book~\cite{Garside}. Also see~\cite{Dig} for the computational aspects.

\subsection{The notion of a Garside family}\label{SS:Greedy}

Our aim is to define normal forms that work in the same way as the $\Delta$-normal form of a Garside monoid, but in more general monoids (in fact, monoids can be extended into categories at no cost). So, we start with a monoid~$\MM$ equipped with a generating family~$\SSS$ and try to define for the elements of~$\MM$ distinguished $\SSS$-decompositions that resemble $\Delta$-normal decompositions: in particular, if $(\MM, \Delta)$ is a Garside monoid and $\SSS$ is $\Div(\Delta)$, we should retrieve $\Delta$-normal decompositions. Of course, we cannot expect to do that for an arbitrary generating family~$\SSS$, and this is where the notion of a Garside family will appear. First, if we try to just copy~\eqref{E:NormalD}, problems arise. Therefore, we start from a new notion.

\begin{defi}\label{D:NormalS}
\rm If $\MM$ is a left-cancellative monoid and~$\SSS$ is included in~$\MM$, an $\SSS$-word~$\sss_1\sep\sss_2$ is called \emph{$\SSS$-normal} if the following condition holds:
\begin{equation}\label{E:NormalS}
\forall\sss{\in}\SSS\ \forall\ff{\in}\MM\: (\sss \dive \ff \sss_1\sss_2 \Rightarrow \sss\dive \ff \sss_1).
\end{equation}
An $\SSS$-word $\sss_1 \sep \pdots \sep \sss_\pp$ is called \emph{$\SSS$-normal} if $\sss_\ii \sep \sss_{\ii+1}$ is $\SSS$-normal for every~$\ii < \pp$, and \emph{strict $\SSS$-normal} if it is $\SSS$-normal with, in addition, $\sss_\pp \not= 1$.
\end{defi}

Relation~\eqref{E:NormalS} is reminiscent of~\ITEM3 in Lemma~\ref{L:Head}, but with the important difference of the additional term~$\ff$: we do not only consider the left-divisors of~$\sss_1 \sss_2$ that lie in~$\SSS$, but, more generally, all elements of~$\SSS$ that left-divide~$\ff \sss_1 \sss_2$.

\begin{exam}\label{X:Connection}
\rm Assume that $(\MM, \Delta)$ is a Garside monoid, and let $\SSS:= \Div(\Delta)$. Assume that $\sss_1 \sep \pdots \sep \sss_\pp$ is $\SSS$-normal in the sense of Def.\,\ref{D:NormalS}. Then, for every~$\ii$, \eqref{E:NormalS} implies in particular $\forall\sss{\in}\SSS\: (\sss \dive \sss_1\sss_2 \Rightarrow \sss\dive \sss_1)$, whence $\sss_\ii = \HH(\sss_\ii \sss_{\ii+1})$. Hence, by Lemma~\ref{L:Head}, $\sss_1 \sep \pdots \sep \sss_\pp$ is $\Delta$-normal in the sense of Prop.\,\ref{P:NormalD}. 

 The converse implication is also true, but less obvious. Indeed, assume that $\sss_1 \sep \pdots \sep \sss_\pp$ is $\Delta$-normal, and we have $\sss \dive \ff \sss_\ii \sss_{\ii+1}$ for some~$\sss$ in~$\Div(\Delta)$ and~$\ff$ in~$\MM$. Then, using the right-complement operation~$\under$ as in the proof of Lemma~\ref{L:Head}, we deduce that $\ff \under\sss$ left-divides~$\sss_\ii \sss_{\ii+1}$, as illustrated in the diagram
$$\begin{picture}(60,14)(0,0)
\pcline{->}(1,0)(29,0)
\pcline(31,0)(53,0)\psarc(53,7){7}{270}{360}\pcline{->}(60,7)(60,9)
\pcline{->}(1,10)(29,10)\taput{$\ff$}
\pcline{->}(31,10)(44,10)\taput{$\sss_1$}
\pcline{->}(46,10)(59,10)\taput{$\sss_2$}
\pcline{->}(0,9)(0,1)\tlput{$\sss$}
\pcline{->}(30,9)(30,1)\trput{$\ff\under\sss$}
\psarc[style=thin](45,10 ){3}{0}{180}
\psarc[style=exist](38,7){7}{270}{360}\pcline[style=exist]{->}(45,7)(45,9)
\end{picture}$$
As $\sss_\ii \sep \sss_{\ii+1}$ is $\Delta$-normal, we deduce $\ff \under\sss \dive \sss_\ii$, whence $\sss \dive \ff \sss_\ii$. Hence \eqref{E:NormalS} is satisfied and $\sss_\ii \sep \sss_{\ii+1}$ is $\SSS$-normal in the sense of Def.\,\ref{D:NormalS}.
\end{exam}

By definition, being $\SSS$-normal is a local property only involving length-two subfactors. So we immediately obtain:

\begin{prop}\label{P:RegularS}
If $\MM$ is a monoid and $\SSS$ is a finite subfamily of~$\MM$, then $\SSS$-normal words form a regular language.
\end{prop}

Hereafter we investigate $\SSS$-normal decompositions. An easy, but important fact is that such decompositions are necessarily (almost) unique when they exist. We shall restrict to the case of monoids that admit no nontrivial invertible element (as Garside monoids do). This restriction is not necessary, but it makes statements more simple: essentially, one can cope with nontrivial invertible elements at the expense of replacing equality with the weaker equivalence relation~$=^{\scriptscriptstyle\times}$, where $\gg=^{\scriptscriptstyle\times} \gg'$ means $\gg = \gg\ee$ for some invertible element~$\ee$, see~\cite{Garside}. 

\begin{lemm}\label{L:Uniqueness}
Assume that~$\MM$ is a left-cancellative monoid with no nontrivial invertible elements and~$\SSS$ is included in~$\MM$. Then every element of~$\gg$ admits at most one strict $\SSS$-normal decomposition.
\end{lemm}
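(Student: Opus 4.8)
The plan is to prove uniqueness by establishing that the first entry of any strict $\SSS$-normal decomposition of~$\gg$ is uniquely determined by~$\gg$, and then to invoke left-cancellativity and induct. Concretely, suppose $\sss_1 \sep \pdots \sep \sss_\pp$ and $\ttt_1 \sep \pdots \sep \ttt_\qq$ are two strict $\SSS$-normal decompositions of the same element~$\gg$. First I would show $\sss_1 = \ttt_1$. The key observation is that the defining condition~\eqref{E:NormalS}, applied with $\ff = 1$, says that among all elements of~$\SSS$ left-dividing $\sss_1 \sss_2 \pdots \sss_\pp = \gg$, the entry~$\sss_1$ is maximal in a strong sense: any $\sss \in \SSS$ with $\sss \dive \gg$ satisfies $\sss \dive \sss_1$. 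The hard part, and the place to be careful, is to extend~\eqref{E:NormalS} from length-two to the whole tail: I would argue that $\SSS$-normality of the pair $\sss_1 \sep \sss_2$ together with $\SSS$-normality of $\sss_2 \sep \pdots \sep \sss_\pp$ propagates so that every $\sss \in \SSS$ left-dividing $\sss_1 \sss_2 \pdots \sss_\pp$ already left-divides~$\sss_1$. This is the analogue of the implication \ITEM3 $\Rightarrow$ \ITEM2 in Lemma~\ref{L:Head}, and the additional free factor~$\ff$ in~\eqref{E:NormalS} is exactly what makes the inductive step go through: if $\sss \dive \sss_1 (\sss_2 \pdots \sss_\pp)$, one uses normality of the tail to control the interaction, then feeds the result back into~\eqref{E:NormalS} for the leading pair.

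Granting that propagation, I would define $\HH_\SSS(\gg)$ to be the set of $\sss \in \SSS$ with $\sss \dive \gg$. The argument above shows that both $\sss_1$ and $\ttt_1$ are $\dive$-largest elements of $\HH_\SSS(\gg)$ that themselves lie in~$\SSS$ and dominate every member of $\HH_\SSS(\gg)$; that is, every $\sss \in \SSS$ dividing~$\gg$ divides~$\sss_1$, and symmetrically divides~$\ttt_1$. Since $\ttt_1 \in \SSS$ and $\ttt_1 \dive \gg$, we get $\ttt_1 \dive \sss_1$, and by symmetry $\sss_1 \dive \ttt_1$. In a monoid with no nontrivial invertible element, $\sss_1 \dive \ttt_1$ and $\ttt_1 \dive \sss_1$ force $\sss_1 = \ttt_1$: indeed $\sss_1 u = \ttt_1$ and $\ttt_1 v = \sss_1$ give $\sss_1 uv = \sss_1$, whence by left-cancellativity $uv = 1$, so $u$ is invertible, hence $u = 1$.

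With $\sss_1 = \ttt_1$ established, left-cancellativity yields a common element $\gg'$ with $\sss_2 \pdots \sss_\pp = \gg' = \ttt_2 \pdots \ttt_\qq$, and $\sss_2 \sep \pdots \sep \sss_\pp$ and $\ttt_2 \sep \pdots \sep \ttt_\qq$ are again strict $\SSS$-normal decompositions of~$\gg'$ (being $\SSS$-normal is inherited by factors, and the strictness condition $\sss_\pp \neq 1$, $\ttt_\qq \neq 1$ is preserved). I would then complete the proof by induction on the length, or on a suitable well-founded measure; the base case is $\gg = 1$, whose only strict $\SSS$-normal decomposition is the empty word, since a nonempty strict decomposition would end in $\sss_\pp \neq 1$ and hence represent a nontrivial element. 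Note that without left-cancellativity the cancellation step fails, and without the no-nontrivial-invertibles hypothesis one only obtains $\sss_1 =^{\scriptscriptstyle\times} \ttt_1$, which is precisely the weaker conclusion mentioned before the lemma.

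I expect the main obstacle to be the propagation step: verifying that the length-two normality conditions combine to give the global divisibility statement $\sss \dive \sss_1 \sss_2 \pdots \sss_\pp \Rightarrow \sss \dive \sss_1$ for all $\sss \in \SSS$. Unlike in the Garside-monoid setting of Lemma~\ref{L:Head}, here I cannot lean on lcms, gcds, or the right-complement operation, so the induction must be driven purely by the shape of~\eqref{E:NormalS}. The free factor~$\ff$ is the crucial device: to pass from the tail to the full word, one applies the tail's normality with an auxiliary left factor absorbing~$\sss_1$, which is exactly the generality that Definition~\ref{D:NormalS} was designed to provide.
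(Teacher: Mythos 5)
Your proof is correct and essentially coincides with the paper's (there only sketched): in both arguments one peels the normality condition~\eqref{E:NormalS} along the word, using the accumulated prefix as the free factor~$\ff$, to show that any $\sss$ in~$\SSS$ dividing the whole element divides the first entry, whence $\sss_1 \dive \ttt_1$ and $\ttt_1 \dive \sss_1$, then $\sss_1 = \ttt_1$ by absence of nontrivial invertibles, then induction after left-cancelling. The only hair worth smoothing is your step ``$uv = 1$, so $u$ is invertible'': in a bare monoid this gives only right-invertibility, but left-cancellativity supplies the missing half, since $u(vu) = (uv)u = u$ forces $vu = 1$, so the argument stands as written.
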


\begin{proof}[sketch]
Assume that $\sss_1 \sep \pdots \sep \sss_\pp$ and $\ttt_1 \sep \pdots \sep \ttt_\qq$ are $\SSS$-normal decompositions of an element~$\gg$. From the assumption that $\sss_1$ lies in~$\SSS$ and left-divides~$\ttt_1 \pdots \ttt_\qq$, and that $\ttt_1 \sep \pdots \sep \ttt_\qq$ is $\SSS$-normal, one easily deduces $\sss_1 \dive \ttt_1$. By a symmetric argument, one deduces $\ttt_1 \dive \sss_1$, whence $\sss_1 = \ttt_1$, because $\MM$ has non nontrivial invertible element. Then use an induction.
\hfill\end{proof}

If we consider $\SSS$-normal decompositions that are not strict, uniqueness is no longer true as, trivially, $\sss_1 \sep \pdots \sep \sss_\pp$ being $\SSS$-normal implies that $\sss_1 \sep \pdots \sep \sss_\pp \sep 1 \sep \pdots \sep 1$ is also $\SSS$-normal (and represents the same element of the ambient monoid). Lemma~\ref{L:Uniqueness} says that this is the only lack of uniqueness.

At this point, we are left with the question of the existence of $\SSS$-normal decompositions, and this is where the central technical notion arises:

\begin{defi}\label{D:GarNormal}
\rm Assume that $\MM$ is a left-cancellative monoid with no nontrivial invertible elements and $\SSS$ is a subset of~$\MM$ that contains~$1$. We say that $\SSS$ is a \emph{Garside family in~$\MM$} if every element~$\gg$ of~$\MM$ has an $\SSS$-normal decomposition, that is, there exists an $\SSS$-normal $\SSS$-word $\sss_1 \sep \pdots \sep \sss_\pp$ satisfying $\sss_1 \pdots \sss_\pp = \gg$.
\end{defi}

\begin{exam}\label{X:GarFam}
\rm It follows from the connection of Ex.\,\ref{X:Connection} that, if $(\MM, \Div(\Delta)$ is a Garside monoid, then $\Div(\Delta)$ is a  Garside family in~$\MM$. So, in particular, the $\nn$-cube~$\SSS_\nn$ of~\eqref{E:AbelianCube} is a Garside family in the abelian monoid~$\NNNN^\nn$ and, similarly, the family of all simple $\nn$-strand braids is a Garside family in the braid monoid~$\BP\nn$.

Many examples of a different flavour exist. For instance, \emph{every} left-cancellative monoid~$\MM$ is a Garside family in itself, since every element~$\gg$ of~$\MM$ admits the length-one $\MM$-normal decomposition~$\gg$ (!). More interestingly, let $K^+$ be the ``Klein bottle monoid''
$$K^+ := \MON{\tta, \ttb}{\tta = \ttb\tta\ttb},$$
which is the positive cone in the ordered group $\GR{\tta, \ttb}{\tta = \ttb\tta\ttb}$, itself the fundamental group of the Klein bottle, and the nontrivial semidirect product $\ZZZZ \rtimes \ZZZZ$. Then $K^+$ cannot be made a Garside monoid since no function~$\lambda$ as in Def.\,\ref{D:GarMon}\ITEM1 may exist. However, if we put $\Delta := \tta^2$, the left- and right-divisors of~$\Delta$ coincide and the family $\Div(\Delta)$ is an (infinite)  Garside family in~$K^+$. 

We refer to~\cite{Garside} for many examples of Garside families, and just mention the recent result~\cite{Din} that every finitely generated Artin--Tits monoid admits a \emph{finite} Garside family, independently of whether the associated Coxeter group is finite or not. In Fig.\,\ref{F:Artin}, we display such a finite Garside family for the monoid with presentation 
$$\MON{\sig1, \sig2, \sig3}{\sig1\sig2\sig1 = \sig2\sig1\sig2, \sig2\sig3\sig2 = \sig3\sig2\sig3, \sig3\sig1\sig3 = \sig1\sig3\sig1},$$
that is, for what is called type~$\Att$. 
\end{exam}

\begin{figure}[htb]\centering
\begin{picture}(73,35)(-13,1)
\psset{xunit=0.9mm}\psset{yunit=0.55mm}
\psset{fillstyle=solid,fillcolor=black}
\cnode(20,30){0.5}{e}
\cnode(30,30){0.5}{a}
\cnode(15,40){0.5}{b}
\cnode(15,20){0.5}{c}
\cnode(20,50){0.5}{ba}
\cnode(35,40){0.5}{ab}
\cnode(5,20){0.5}{cb}
\cnode(5,40){0.5}{bc}
\cnode(20,10){0.5}{ca}
\cnode(35,20){0.5}{ac}
\cnode(30,50){0.5}{aba}
\cnode(0,30){0.5}{bcb}
\cnode(30,10){0.5}{cac}
\cnode[fillcolor=white,linecolor=gray](45,40){0.5}{abc}
\cnode[fillcolor=white,linecolor=gray](45,20){0.5}{acb}
\cnode[fillcolor=white,linecolor=gray](0,50){0.5}{bca}
\cnode[fillcolor=white,linecolor=gray](15,60){0.5}{bac}
\cnode[fillcolor=white,linecolor=gray](15,0){0.5}{cab}
\cnode[fillcolor=white,linecolor=gray](0,10){0.5}{cba}
\cnode(5,0){0.5}{caba}
\cnode(50,30){0.5}{abcb}
\cnode(5,60){0.5}{bcac}
\psset{nodesep=1mm}
\AArrow(e,a)
\BArrow(e,b)
\CArrow(e,c)
\BArrow(a,ab)
\CArrow(a,ac)
\AArrow(b,ba)
\CArrow(b,bc)
\AArrow(c,ca)
\BArrow(c,cb)
\AArrow(ab,aba)
\CArrow(ab,abc)
\BArrow(ba,aba)
\CArrow(ba,bac)
\AArrow(bc,bca)
\BArrow(bc,bcb)
\AArrow(cb,cba)
\CArrow(cb,bcb)
\AArrow(ac,cac)
\BArrow(ac,acb)
\CArrow(ca,cac)
\BArrow(ca,cab)
\BArrow(abc,abcb)
\CArrow(acb,abcb)
\CArrow(bca,bcac)
\AArrow(bac,bcac)
\AArrow(cab,caba)
\BArrow(cba,caba)
\rput[r](e){$1$\ \ }
\rput[l](a){\ \ $\sig1$}
\rput[l](b){\ \ $\sig2$}
\rput[l](c){\ \ $\sig3$}
\rput[r](ab){$\sig1\sig2$\ \ }
\rput[r](ba){$\sig2\sig1$\ \ }
\rput[r](bc){$\sig2\sig3$\ \ }
\rput[r](cb){$\sig3\sig2$\ \ }
\rput[r](ac){$\sig1\sig3$\ \ }
\rput[r](ca){$\sig3\sig1$\ \ }
\rput[r](bcb){$\sig2\sig3\sig2\ \ $}
\rput[l](bac){\ \ $\color{gray}\scriptstyle(\sig2\sig1\sig3)$\ }
\rput[r](bca){$\color{gray}\scriptstyle(\sig2\sig3\sig1)$\ \ }
\rput[l](cac){\ \ $\sig3\sig1\sig3$}
\rput[l](cab){\ \ $\color{gray}\scriptstyle(\sig3\sig1\sig2)$}
\rput[r](cba){$\color{gray}\scriptstyle(\sig3\sig2\sig1)$\ \ }
\rput[l](aba){\ \ $\sig1\sig2\sig1$}
\rput[l](abc){\ \ $\color{gray}\scriptstyle(\sig1\sig2\sig3)$}
\rput[l](acb){\ \ $\color{gray}\scriptstyle(\sig1\sig3\sig2)$}
\rput[l](abcb){\ \ $\sig1\sig2\sig3\sig2$}
\rput[r](bcac){$\sig2\sig3\sig1\sig3$\ \ }
\rput[r](caba){\ $\sig3\sig1\sig2\sig1$\ \ }
\end{picture}
\caption{A finite Garside family~$\SSS$ in the Artin--Tits monoid of type~$\Att$: the sixteen right-divisors of the elements $\sig1\sig2\sig3\sig2$, $\sig2\sig3\sig1\sig3$, and $\sig3\sig1\sig3\sig1$. Attention! The family~$\SSS$ is not closed under left-divisor, implying that some intermediate vertices (the six grey ones) do not belong to~$\SSS$.}
\label{F:Artin}
\end{figure}

\subsection{Computing $\SSS$-normal decompositions}\label{SS:GarNF}

We postpone to the next subsection the question of recognising Garside families, and explain here how $\SSS$-normal decompositions behave when they exist, that is, when $\SSS$ is a Garside family. To this end, the point is that the counterparts of Lemmas~\ref{L:PsiD} and~\ref{L:DominoD} are valid.

\begin{lemm}\label{L:PsiS}
Assume that $\MM$ is a left-cancellative monoid with no nontrivial invertible element and $\SSS$ is a Garside family in~$\MM$. Then, for all~$\sss_1, \sss_2$ in~$\SSS$, the element~$\sss_1\sss_2$ has a unique $\SSS$-normal decomposition of length~two. 
\end{lemm}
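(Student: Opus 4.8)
The statement to prove is Lemma~\ref{L:PsiS}: for $\sss_1,\sss_2$ in a Garside family~$\SSS$, the product $\sss_1\sss_2$ admits a unique $\SSS$-normal decomposition of length two. Uniqueness is already essentially free: by Lemma~\ref{L:Uniqueness} every element has at most one \emph{strict} $\SSS$-normal decomposition, and among length-two (not necessarily strict) decompositions the only freedom is appending trailing~$1$'s, so the content is genuinely the \emph{existence} of a length-two $\SSS$-normal decomposition of $\sss_1\sss_2$. The plan is therefore to produce, from $\sss_1,\sss_2$, a pair $\sss'_1\sep\sss'_2$ in~$\SSS$ with $\sss'_1\sss'_2=\sss_1\sss_2$ and $\sss'_1\sep\sss'_2$ $\SSS$-normal. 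The natural candidate, mimicking Lemma~\ref{L:PsiD}, is to let $\sss'_1$ be the ``$\SSS$-head'' of $\sss_1\sss_2$, i.e.\ an $\SSS$-element left-dividing $\sss_1\sss_2$ that absorbs every other left-divisor from~$\SSS$, and then define $\sss'_2$ by left-cancellation.

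\textbf{Key steps.} First I would use that $\SSS$ is a Garside family to get \emph{some} $\SSS$-normal decomposition $\sss'_1\sep\pdots\sep\sss'_\qq$ of the element $\gg:=\sss_1\sss_2$. The issue is that its length~$\qq$ need not be~$2$. So the real work is to show that this normal decomposition has length at most two (after discarding trailing~$1$'s). Here I would exploit the hypothesis that $\sss_1,\sss_2$ themselves lie in~$\SSS$: the two-letter $\SSS$-word $\sss_1\sep\sss_2$ is already a length-two $\SSS$-decomposition of~$\gg$, just not necessarily $\SSS$-normal. The aim is to ``re-normalise'' it without increasing its length. Concretely, I would take $\sss'_1$ to be the left-divisor of $\sss_1\sss_2$ selected by the normality condition~\eqref{E:NormalS}, namely the distinguished $\SSS$-element with the property that every $\sss\in\SSS$ left-dividing $\ff\sss_1\sss_2$ already left-divides $\ff\sss'_1$ for all~$\ff$; since $\sss_1\in\SSS$ left-divides $\sss_1\sss_2$, we get $\sss_1\dive\sss'_1$, so by left-cancellation $\sss'_2$ (defined by $\sss'_1\sss'_2=\sss_1\sss_2$) satisfies that $\sss'_2$ is a right-divisor of~$\sss_2$, hence lies in~$\SSS$ provided $\SSS$ is suitably closed. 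This is the step where the general definition of Garside family must supply the needed closure property (closure under right-divisor, or the equivalent head-existence statement that is part of the standard characterisation of Garside families).

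\textbf{Main obstacle.} The crux is precisely establishing that $\sss'_2$ lands back in~$\SSS$, together with the existence of the head $\sss'_1$ in~$\SSS$. In the Garside-monoid case (Lemma~\ref{L:PsiD}) this was transparent because $\SSS=\Div(\Delta)$ is closed under both left- and right-divisor and heads are computed as left-gcds with~$\Delta$. For an abstract Garside family neither a $\Delta$ nor a lattice structure is available, so I would expect to need the characterisation of Garside families (the one the survey develops from Def.\,\ref{D:GarNormal}) guaranteeing that every element has an $\SSS$-head and that $\SSS$ is closed under right-divisor of its elements. Once those two facts are in hand, the argument is a direct transcription of the proof of Lemma~\ref{L:PsiD}: set $\sss'_1:=$ the $\SSS$-head of $\sss_1\sss_2$, cancel to get $\sss'_2$, check $\sss'_2\in\SSS$ as a right-divisor of $\sss_2\in\SSS$, and observe that $\sss'_1\sep\sss'_2$ is $\SSS$-normal by the defining maximality of the head. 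Uniqueness then follows from Lemma~\ref{L:Uniqueness}. The only delicate point, and the one I would flag, is ensuring the head lies in~$\SSS$ and that the tail stays in~$\SSS$; everything else is the same cancellation-and-maximality bookkeeping as in the Garside-monoid case.
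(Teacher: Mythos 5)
Your plan has a genuine gap at its final step, the one you dismiss as ``the same cancellation-and-maximality bookkeeping as in the Garside-monoid case''. You define $\sss'_1$ as the $\SSS$-head of $\sss_1\sss_2$ and assert that $\sss'_1\sep\sss'_2$ is then $\SSS$-normal ``by the defining maximality of the head''. But head-maximality only yields the $\ff=1$ instance of the defining condition~\eqref{E:NormalS}: it says every $\sss\in\SSS$ with $\sss\dive\sss'_1\sss'_2$ satisfies $\sss\dive\sss'_1$, whereas Def.\,\ref{D:NormalS} demands $\sss\dive\ff\sss'_1\sss'_2\Rightarrow\sss\dive\ff\sss'_1$ for \emph{every} $\ff$ in~$\MM$. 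The paper flags exactly this distinction immediately after Def.\,\ref{D:NormalS} (``with the important difference of the additional term~$\ff$''), and bridging it is not free: it is the content of Prop.\,\ref{P:Connection2}, which requires $\SSS$ to be closed under right-comultiple, a hypothesis your argument never secures. In the Garside-monoid case (Lemma~\ref{L:PsiD}) the target was only the weak, $\ff$-less condition~\eqref{E:NormalD}, so the head sufficed there; transcribing that proof here proves the wrong notion of normality. The gap is repairable --- by Prop.\,\ref{P:RecGarII} every Garside family admits $\SSS$-heads and is closed under right-comultiple and right-divisor, after which Prop.\,\ref{P:Connection2} upgrades your pair to a genuinely $\SSS$-normal one --- but both of those results are quoted in the survey without proof and are substantially deeper than the lemma you are proving, so the repaired argument inverts the paper's logical development.

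The paper's own proof is shorter and entirely self-contained, and runs in the opposite direction: instead of building a head and then struggling to verify~\eqref{E:NormalS}, it \emph{consumes} the strong form of~\eqref{E:NormalS}. By the bare definition of a Garside family, $\sss_1\sss_2$ has some $\SSS$-normal decomposition $\sss'_1\sep\pdots\sep\sss'_\pp$ of unknown length. Since $\sss_1\in\SSS$ left-divides $\sss'_1\pdots\sss'_\pp$, repeated use of the $\ff$-quantified condition (with $\ff$ the successive leading partial products --- this is precisely where the extra term~$\ff$ earns its keep) gives $\sss_1\dive\sss'_1$, say $\sss'_1=\sss_1\ttt_1$; left-cancelling $\sss_1$ gives $\sss_2=\ttt_1\sss'_2\pdots\sss'_\pp$, the same argument gives $\sss_2\dive\ttt_1\sss'_2$, and left-cancelling $\sss_2$ leaves $1=\ttt_2\sss'_3\pdots\sss'_\pp$, whence $\ttt_2=\sss'_3=\pdots=\sss'_\pp=1$ by absence of nontrivial invertible elements. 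So $\sss'_1\sep\sss'_2$ is the desired decomposition, with no head, no closure property, and no appeal to the characterisation theorems. Your reduction of uniqueness to Lemma~\ref{L:Uniqueness} is correct and matches the paper.
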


\begin{proof}
Let $\sss_1, \sss_2$ belong to~$\SSS$. By assumption, $\sss_1 \sss_2$ admits an $\SSS$-normal decomposition, say $\sss'_1 \sep \pdots \sep \sss'_\pp$. As $\sss_1$ belongs to~$\SSS$ and $\sss'_{\pp-1} \sep \sss'_\pp$ is $\SSS$-normal, the assumption $\sss_1 \dive  (\sss'_1 \pdots \sss'_{\pp-2}) \sss'_{\pp-1} \sss'_\pp$ implies $\sss_1 \dive (\sss'_1 \pdots \sss'_{\pp-2}) \sss'_{\pp-1}$. Repeating the argument $\pp-1$ times, we conclude that $\sss_1$ left-divides~$\sss'_1$, say $\sss'_1 = \sss_1 \ttt_1$. Left-cancelling~$\sss_1$, we deduce $\sss_2 = \ttt_1 \sss'_2 \pdots \sss'_\pp$ and, arguing as above, we conclude that $\sss_2$ must left-divide~$\ttt_1 \sss'_2$, say $\ttt_1 \sss'_2 = \sss_2 \ttt_2$. Left-cancelling~$\sss_2$, we deduce $1 = \ttt_2 \sss'_3 \pdots \sss'_\pp$. As $\MM$ has no nontrivial invertible element, the only possibility is $\ttt_1 = \sss'_3 = \pdots  = \sss'_\pp = 1$, which implies that $\sss'_1 \sep \sss'_2$ is an $\SSS$-normal decomposition of~$\sss_1\sss_2$. The argument is illustrated in the diagram
$$\begin{picture}(75,16)(0,-3)
\pcline{->}(1,10)(14,10)\taput{$\sss_1$}
\pcline{->}(16,10)(29,10)\taput{$\sss_2$}
\pcline{->}(1,0)(14,0)\tbput{$\sss'_1$}
\pcline{->}(16,0)(29,0)\tbput{$\sss'_2$}
\pcline{->}(31,0)(44,0)\tbput{$\sss'_3$}
\pcline[style=etc](47,0)(58,0)
\pcline{->}(61,0)(74,0)\tbput{$\sss'_\pp$}
\pcline[style=double](0,9)(0,1)
\pcline[style=double](31,10)(68,10)
\psarc[style=double](68,3){7}{0}{90}
\pcline[style=double](75,3)(75,1)
\pcline[style=exist]{->}(15,9)(15,1)\trput{$\ttt_1$}
\pcline[style=exist]{->}(30,9)(30,1)\trput{$\ttt_2$}
\psarc[style=thin](14.5,0){3}{180}{360}
\psarc[style=thin](29.5,0){3}{180}{360}
\psarc[style=thin](44.5,0){3}{180}{360}
\psarc[style=thin](59.5,0){3}{180}{360}
\end{picture}$$
Thus, every element of~$\Pow\SSS2$ has an $\SSS$-normal decomposition of length two. Its uniqueness follows from Lemma~\ref{L:Uniqueness}.
\end{proof}

\pagebreak

\begin{lemm}\label{L:DominoS}
Assume that $\MM$ is a left-cancellative monoid with no nontrivial invertible element and $\SSS$ is a Garside family in~$\MM$. Then the left domino rule is valid for $\SSS$-normal words of length~two.
\end{lemm}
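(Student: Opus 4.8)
The plan is to prove the validity of the left domino rule for $\SSS$-normal words of length two directly from Definition~\ref{D:NormalS}, mimicking the structure of the proof of Lemma~\ref{L:DominoD} but using the more robust characterisation~\eqref{E:NormalS} that already carries the extra left factor~$\ff$. So I assume that $\sss_1, \sss_2, \sss'_1, \sss'_2, \ttt_0, \ttt_1, \ttt_2$ lie in~$\SSS$, that $\sss'_1 \ttt_1 = \ttt_0 \sss_1$ and $\sss'_2 \ttt_2 = \ttt_1 \sss_2$ hold in~$\MM$, and that the three words $\sss_1 \sep \sss_2$, $\sss'_1 \sep \ttt_1$, and $\sss'_2 \sep \ttt_2$ are $\SSS$-normal. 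The goal is to verify condition~\eqref{E:NormalS} for $\sss'_1 \sep \sss'_2$, that is, to show that for every $\sss$ in~$\SSS$ and every $\ff$ in~$\MM$ the hypothesis $\sss \dive \ff \sss'_1 \sss'_2$ forces $\sss \dive \ff \sss'_1$.

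First I would fix such an $\sss$ and $\ff$ and trace the element~$\sss$ through the commutative diagram of Definition~\ref{D:LeftDomino}. The key idea is to push the divisibility statement downward through the two commuting squares so as to reach the bottom row $\sss_1 \sep \sss_2$, for which normality is assumed. Concretely, from $\sss \dive \ff \sss'_1 \sss'_2$ I append the right column edge to get $\sss \dive \ff \sss'_1 \sss'_2 \ttt_2$; using $\sss'_2 \ttt_2 = \ttt_1 \sss_2$ this rewrites the right part, and using $\sss'_1 \ttt_1 = \ttt_0 \sss_1$ the whole right-hand side becomes $\ff \ttt_0 \sss_1 \sss_2$. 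Thus $\sss \dive (\ff \ttt_0) \sss_1 \sss_2$. Since $\sss_1 \sep \sss_2$ is $\SSS$-normal, applying~\eqref{E:NormalS} with left factor $\ff \ttt_0$ yields $\sss \dive \ff \ttt_0 \sss_1 = \ff \sss'_1 \ttt_1$.

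Next I would descend one more square. Now I have $\sss \dive \ff \sss'_1 \ttt_1$, and I want to remove the trailing $\ttt_1$. This is precisely the situation to invoke $\SSS$-normality of $\sss'_1 \sep \ttt_1$: applying~\eqref{E:NormalS} to that normal word with the left factor $\ff$ gives $\sss \dive \ff \sss'_1$, which is exactly the conclusion required. Hence $\sss'_1 \sep \sss'_2$ satisfies~\eqref{E:NormalS} and is $\SSS$-normal, so the left domino rule holds.

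The main subtlety to watch is that the definition~\eqref{E:NormalS} must be applied with the correct left factor at each stage — $\ff \ttt_0$ in the first descent and $\ff$ in the second — and that the two rewrites $\sss'_1 \ttt_1 = \ttt_0 \sss_1$ and $\sss'_2 \ttt_2 = \ttt_1 \sss_2$ are substituted in the right order so that the telescoping $\ff \sss'_1 \sss'_2 \ttt_2 = \ff \ttt_0 \sss_1 \sss_2$ is valid in~$\MM$. Unlike the Garside-monoid proof of Lemma~\ref{L:DominoD}, no right-complement operation or appeal to $\Div(\Delta)$ is needed here, because the universally quantified factor~$\ff$ already built into the definition of $\SSS$-normality does the work of carrying the prefix along; this is the whole point of formulating normality as~\eqref{E:NormalS} rather than as condition~\ITEM3 of Lemma~\ref{L:Head}. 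The one place to be careful is simply that each element produced along the way (notably $\sss$ itself, which stays fixed in~$\SSS$) remains a legitimate instance for the quantifiers in~\eqref{E:NormalS}; since $\sss$ never changes and only the left factor grows or shrinks, this causes no difficulty.
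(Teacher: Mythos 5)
Your proof is correct and matches the paper's own argument essentially step for step: you trace $\sss \dive \ff\sss'_1\sss'_2$ through the two commuting squares to get $\sss \dive \ff\ttt_0\sss_1\sss_2$, apply~\eqref{E:NormalS} for $\sss_1 \sep \sss_2$ with left factor $\ff\ttt_0$, then apply it for $\sss'_1 \sep \ttt_1$ with left factor $\ff$, exactly as in the paper. Your closing observation---that the built-in quantified factor~$\ff$ in Def.~\ref{D:NormalS} is what makes the right-complement machinery of Lemma~\ref{L:DominoD} unnecessary here---is also precisely the point the paper's treatment illustrates.
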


\begin{proof}
Assume that $\sss_1, \sss_2, \sss'_1, \sss'_2, \ttt_0, \ttt_1, \ttt_2$ lie in~$\SSS$ and satisfy the assumptions of Def.\,\ref{D:LeftDomino} (with respect to $\SSS$-normal words of length 

\noindent\begin{minipage}{\textwidth}
\rightskip 43 mm
two). Our aim is \VR(3.2,0) to show that $\sss'_1 \sep \sss'_2$ is $\SSS$-normal. Assume $\sss \in \SSS$ and $\sss \dive \ff \sss'_1 \sss'_2$. First, $\sss \dive \sss'_1 \sss'_2$ implies $\sss \dive \ff \sss'_1 \sss'_2 \ttt_2$, whence $\sss \dive \ff \ttt_0 \sss_1 \sss_2$. As $\sss_1 \sep \sss_2$ is $\SSS$-normal, we deduce $\sss \dive \ff \ttt_0 \sss_1$, whence $\sss \dive \ff \sss'_1 \ttt_1$. As $\sss'_1 \sep \ttt_1$ is $\SSS$-normal, we deduce $\sss \dive \ff \sss'_1$ in turn. Therefore, $\sss'_1 \sep \sss'_2$ is $\SSS$-normal.\hfill\qed%
\begin{picture}(0,0)(-8,-2)
\pcline{->}(1,0)(14,0)\tbput{$\sss_1$}
\pcline{->}(16,0)(29,0)\tbput{$\sss_2$}
\pcline{->}(1,10)(14,10)\taput{$\sss'_1$}
\pcline{->}(16,10)(29,10)\taput{$\sss'_2$}
\pcline{->}(0,9)(0,1)\tlput{$\ttt_0$}
\pcline{->}(0,19)(0,9)\tlput{$\ff$}
\pcline{->}(15,9)(15,1)\tlput{$\ttt_1$}
\pcline{->}(30,9)(30,1)\trput{$\ttt_2$}
\psarc[style=thin](14.5,0){3}{180}{360}
\psarc[style=thin](15,10){3.5}{180}{270}
\psarc[style=thin](30,10){3.5}{180}{270}
\psarc[style=thinexist](14.5,10){3}{0}{180}
\pcline{->}(1,20)(14,20)\taput{$\sss$}
\psline(16,20)(23,20)\psarc(23,13){7}{0}{90}
\psline{->}(30,13)(30,11)
\pcline[style=exist]{->}(15,19)(15,11)
\psbezier[style=exist,border=3pt]{->}(16,19)(20,13)(20,7)(16,1)
\end{picture}
\end{minipage}
\def\qed{\relax}\end{proof}

Arguing exactly as for Prop.\,\ref{P:RecipeD} and using, in particular, Fig.\,\ref{F:LeftMultD}, we obtain

\begin{prop}\label{P:RecipeS}
Assume that $\MM$ is a left-cancellative monoid with no nontrivial invertible element and $\SSS$ is a Garside family in~$\MM$. Then, for every $\SSS$-word~$\ww$ of length~$\pp$, there exists a unique $\SSS$-normal word~$\NMS(\ww)$ of length~$\pp$ that represents the same element as~$\ww$. Moreover, with $\delta_\pp$ as in Prop.\,\ref{P:RecipeD}, one has
\begin{equation}\label{E:RecipeS}
\NMS(\ww) = \NMSbarr{\delta_\pp}(\ww).
\end{equation}
\end{prop}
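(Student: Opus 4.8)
The plan is to transcribe the proof of Prop.~\ref{P:RecipeD} almost word for word, replacing $\Delta$-normality by $\SSS$-normality throughout and invoking Lemmas~\ref{L:PsiS} and~\ref{L:DominoS} wherever the Garside-monoid argument used Lemmas~\ref{L:PsiD} and~\ref{L:DominoD}. The reason this works is that those two lemmas package precisely the two facts that drove the earlier proof: Lemma~\ref{L:PsiS} makes $\NMSbar$ a well-defined self-map of~$\Pow\SSS2$ (the $\SSS$-normal decomposition of a length-two $\SSS$-word has length two, with a possible trailing~$1$, so length is preserved), and Lemma~\ref{L:DominoS} furnishes the validity of the left domino rule for $\SSS$-normal length-two words.

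First I would prove the left-multiplication step. Given $\ttt$ in~$\SSS$ and an $\SSS$-normal word $\sss_1 \sep \pdots \sep \sss_\pp$, set $\ttt_0 := \ttt$ and, inductively, $\sss'_\ii \sep \ttt_\ii := \NMSbar(\ttt_{\ii-1} \sep \sss_\ii)$ for $\ii = 1 \wdots \pp$, exactly as in Fig.~\ref{F:LeftMultD}. Commutativity of that diagram gives $\ttt\sss_1 \pdots \sss_\pp = \sss'_1 \pdots \sss'_\pp \ttt_\pp$ in~$\MM$, while the left domino rule applied successively shows that each pair $\sss'_\ii \sep \sss'_{\ii+1}$ is $\SSS$-normal; hence $\sss'_1 \sep \pdots \sep \sss'_\pp$ is $\SSS$-normal and
\begin{equation*}
\NMS(\ttt \sep \ww) = \NMSbarr{1\sep2\sep \pdots \sep \pp}(\ttt \sep \ww)
\end{equation*}
for every $\SSS$-normal word~$\ww$ of length~$\pp$. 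Using the recursion $\delta_\pp = \sh(\delta_{\pp-1}) \sep 1 \sep 2 \sep \pdots \sep \pp{-}1$, an induction on~$\pp$ then yields~\eqref{E:RecipeS}: the block $\sh(\delta_{\pp-1})$ normalises the length-$(\pp{-}1)$ suffix by the induction hypothesis, and the final block $1 \sep 2 \sep \pdots \sep \pp{-}1$ is exactly the left-multiplication of that normal suffix by the first letter.

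Existence of $\NMS(\ww)$ is then immediate from the construction: $\NMSbarr{\delta_\pp}(\ww)$ has length~$\pp$ (each application of $\NMSbar$ preserves length), represents the same element as~$\ww$, and is $\SSS$-normal by the induction above. For uniqueness among length-$\pp$ words, any such word is the strict $\SSS$-normal decomposition of the common element~$\gg$ followed by trailing letters~$1$; by Lemma~\ref{L:Uniqueness} that strict decomposition is unique, and the number of trailing~$1$'s is forced to equal~$\pp$ minus its length. I do not anticipate a genuine obstacle here: all the substance already sits in Lemmas~\ref{L:PsiS} and~\ref{L:DominoS}, and the only points requiring care are the bookkeeping of the shift sequence~$\delta_\pp$ in the induction and the observation—guaranteed by the length-two formulation of Lemma~\ref{L:PsiS}—that normalisation never changes the length of a word.
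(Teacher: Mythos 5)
Your proposal is correct and takes essentially the same route as the paper, whose proof of Prop.~\ref{P:RecipeS} consists precisely of ``arguing exactly as for Prop.~\ref{P:RecipeD}'' with Lemmas~\ref{L:PsiS} and~\ref{L:DominoS} supplying the two ingredients (length-preserving normalisation of length-two words, validity of the left domino rule) and the diagram of Fig.~\ref{F:LeftMultD} giving the left-multiplication step and the induction on~$\delta_\pp$. Your uniqueness argument---strict $\SSS$-normal decomposition unique by Lemma~\ref{L:Uniqueness}, plus a forced number of trailing letters~$1$---is exactly the observation the paper records immediately after that lemma.
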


Thus, the recipe for computing the $\Delta$-normal form associated with a Garside monoid extends without change to the $\SSS$-normal form associated with an arbitrary Garside family~$\SSS$. As in Sec.\,\ref{S:NormalD}, we deduce

\begin{coro}\label{C:ComplexS}
Assume that $\MM$ is a left-cancellative monoid with no nontrivial invertible element and $\SSS$ is a Garside family in~$\MM$. Then $\SSS$-normal decompositions can be computed in linear space and quadratic time. The Word Problem for~$\MM$ with respect to~$\SSS$ lies in~$\textsc{dtime}(\nn^2)$.
\end{coro}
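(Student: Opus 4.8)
The plan is to follow exactly the strategy used for Corollary~\ref{C:ComplexD}, transporting it verbatim from the Garside-monoid setting to the Garside-family setting, since Proposition~\ref{P:RecipeS} now provides the identical explicit recipe $\NMS(\ww) = \NMSbarr{\delta_\pp}(\ww)$ that $\NMD$ obeyed in Proposition~\ref{P:RecipeD}. First I would observe that the family~$\SSS$ need not be finite in general (as the Klein bottle example in Ex.\,\ref{X:GarFam} shows), so the first step is to record where finiteness is actually used. The standing hypothesis of the corollary is that $\SSS$ is a Garside family, and for the complexity claim we additionally need $\SSS$ to be finite; I would either assume this explicitly or note that the argument requires a finite~$\SSS$ (as is implicit, matching Cor.\,\ref{C:ComplexD} where $\Div(\Delta)$ is finite by Def.\,\ref{D:GarMon}\,\ITEM4).

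Granting finiteness, the key steps run in parallel to the proof of Cor.\,\ref{C:ComplexD}. Since $\SSS$ is finite, the restricted map $\NMSbar$, which sends each length-two $\SSS$-word to its unique $\SSS$-normal decomposition (this decomposition exists and is unique by Lemma~\ref{L:PsiS}), has a finite domain $\Pow\SSS2$ and a finite codomain, so its complete lookup table can be precomputed once. Each application of $\NMSbar$ then costs $O(1)$ and, crucially, preserves the length of the word, so the running word never grows. By Proposition~\ref{P:RecipeS}, normalising a length-$\pp$ word $\ww$ amounts to applying $\NMSbar$ along the position sequence~$\delta_\pp$, and the inductive definition $\delta_\pp := \sh(\delta_{\pp-1}) \sep 1 \sep 2 \sep \pdots \sep \pp{-}1$ gives $\LG{\delta_\pp} = \pp(\pp-1)/2$. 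Hence the total time is $O(\pp^2)$, and since the word length stays equal to~$\pp$ throughout, the space used is $O(\pp)$, i.e.\ linear.

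For the Word Problem statement I would argue that computing $\SSS$-normal decompositions solves it: two $\SSS$-words $\ww, \ww'$ represent the same element of~$\MM$ if and only if $\NMS(\ww)$ and $\NMS(\ww')$ coincide up to the possible adjunction of trailing letters~$1$. This is where I would invoke the uniqueness supplied by Lemma~\ref{L:Uniqueness} (every element has at most one \emph{strict} $\SSS$-normal decomposition), which guarantees that after deleting final~$1$'s the two normal words must be literally identical. Comparing the two normal forms costs linear time in their common length, so the whole decision procedure stays within $O(\nn^2)$, placing the Word Problem in~$\textsc{dtime}(\nn^2)$.

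The only genuinely delicate point, and the step I expect to require care rather than the mechanical transcription, is the finiteness issue flagged above: the precomputation of the table of $\NMSbar$ and the $O(1)$ cost per step both break down if $\SSS$ is infinite. Everything else is a direct reading-off of Proposition~\ref{P:RecipeS} together with the length-preservation property of $\NMSbar$, so the substance of the proof is really contained in that proposition and in Lemmas~\ref{L:PsiS} and~\ref{L:Uniqueness}; the corollary itself is a short bookkeeping argument about the cost of the recipe.
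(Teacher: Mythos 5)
Your proposal is correct and follows essentially the same route as the paper, which gives no separate argument but simply transcribes the proof of Cor.~\ref{C:ComplexD}: precompute the table of~$\NMSbar$, apply it along the length-$\pp(\pp-1)/2$ sequence~$\delta_\pp$ of Prop.~\ref{P:RecipeS}, and decide the Word Problem by comparing normal words up to trailing letters~$1$ (via Lemma~\ref{L:Uniqueness}). Your flag about finiteness of~$\SSS$ is a fair and worthwhile observation: the paper leaves this hypothesis implicit (it holds by Def.~\ref{D:GarMon}\ITEM4 in the Garside-monoid case but can fail for general Garside families, as the Klein bottle example of Ex.~\ref{X:GarFam} shows), and without it one must at least assume the restricted map~$\NMSbar$ is computable at bounded cost per application.
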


On the other hand, as the diagram of Fig.\,\ref{F:LeftMultD} remains valid, we obtain

\begin{coro}\label{C:LeftFTPS}
Assume that $\MM$ is a left-cancellative monoid with no nontrivial invertible element and $\SSS$ is a Garside family in~$\MM$. Then the $\SSS$-normal words satisfy the $2$-Fellow traveller Property on the left.
\end{coro}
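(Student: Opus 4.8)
The plan is to read the bound off the left-multiplication diagram of Fig.\,\ref{F:LeftMultD}, just as the analogous statement for Garside monoids (Cor.\,\ref{C:LeftFTPD}) was read off that same diagram. Nothing genuinely new is needed: the two ingredients that make the diagram work, namely the existence and uniqueness of the length-two normalisation $\NMSbar$ (Lemma~\ref{L:PsiS}) and the validity of the left domino rule for $\SSS$-normal words (Lemma~\ref{L:DominoS}), have already been established for an arbitrary Garside family, so Fig.\,\ref{F:LeftMultD} transfers verbatim from the $\Delta$-normal setting.

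Concretely, I would fix $\gg$ in~$\MM$ with strict $\SSS$-normal decomposition $\sss_1\sep\pdots\sep\sss_\pp$, write $\gg_\ii:=\sss_1\pdots\sss_\ii$ for the successive vertices of the corresponding path in the Cayley graph of~$\MM$ with respect to~$\SSS$, and prepend a letter $\ttt$ of~$\SSS$. Following the left-multiplication recipe behind Prop.\,\ref{P:RecipeS}, I set $\ttt_0:=\ttt$ and, inductively, $\sss'_\ii\sep\ttt_\ii:=\NMSbar(\ttt_{\ii-1}\sep\sss_\ii)$ for $\ii=1\wdots\pp$; the left domino rule (Lemma~\ref{L:DominoS}) guarantees that $\sss'_1\sep\pdots\sep\sss'_\pp\sep\ttt_\pp$ is the $\SSS$-normal decomposition of~$\ttt\gg$. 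Writing $\hh_\ii:=\sss'_1\pdots\sss'_\ii$ for the vertices of the $\ttt\gg$-path, the commutativity of the squares in Fig.\,\ref{F:LeftMultD} yields, by a one-line induction on~$\ii$, the identity
\[
\hh_\ii\,\ttt_\ii=\ttt\,\gg_\ii,\qquad \ttt_\ii\in\SSS.
\]

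This single relation carries the whole corollary. It says that the $\ii$-th vertex $\hh_\ii$ of the path of~$\ttt\gg$ is joined by the single $\SSS$-edge $\ttt_\ii$ to the vertex $\ttt\gg_\ii$ of the $\ttt$-translate of the path of~$\gg$, so these two vertices lie at Cayley-graph distance at most~$1$. Since the normal word of~$\ttt\gg$ is one letter longer than that of~$\gg$, synchronising the two paths costs at most one further edge of lag, and the two paths therefore remain within distance~$2$ throughout; this is exactly the $2$-Fellow traveller Property on the left. I expect no real obstacle in this last step: the genuine work was already done in proving Lemma~\ref{L:DominoS}, whose argument uses only left-cancellativity, the absence of nontrivial invertible elements, and the defining property of a Garside family. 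Given that lemma, the present corollary is a verbatim copy of the reasoning behind Cor.\,\ref{C:LeftFTPD}, with $\Div(\Delta)$ replaced by~$\SSS$.
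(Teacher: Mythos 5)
Your proposal is correct and takes essentially the same route as the paper, which obtains this corollary precisely by noting that the diagram of Fig.~\ref{F:LeftMultD} remains valid for an arbitrary Garside family once Lemmas~\ref{L:PsiS} and~\ref{L:DominoS} are in place, exactly the two ingredients you invoke. Your explicit relation $\hh_\ii\,\ttt_\ii = \ttt\,\gg_\ii$ merely spells out the uniformly-bounded-distance observation that the paper leaves implicit when deducing the $2$-Fellow traveller Property.
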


In contrast to the particular case of Garside monoids, there is no symmetric counterpart involving right multiplication in the framework of an arbitrary Garside family. As can be expected, the existence of such a counterpart is equivalent to the validity of the right domino rule for $\SSS$-normal words of length~two. 

\noindent\begin{minipage}{\textwidth}
\rightskip44mm
Now, the \VR(3.2,0) latter may fail, as the counterexample on the right shows: here $\SSS$ is the sixteen-element Garside family described in  Fig.\,\ref{F:Artin} in the Artin--Tits monoid of type~$\Att$.\hfill
\hfill\begin{picture}(0,0)(-6,-1.5)
\psarc[style=thin](15,0){3}{180}{360}
\psarc[style=thin](0,0){3.5}{0}{90}
\psarc[style=thin](15,0){3.5}{0}{90}
\psarc[style=thin](15,10){3}{0}{180}
\pcline{->}(1,0)(14,0)\tbput{$\sig2$}
\pcline{->}(16,0)(29,0)\tbput{$\sig3$}
\pcline{->}(1,10)(14,10)\taput{$\sig1$}
\pcline{->}(16,10)(29,10)\taput{$\sig1\sig2$}
\pcline{->}(0,9)(0,1)\trput{$\sig1\sig2\sig1$}
\pcline{->}(15,9)(15,1)\trput{$\sig1\sig2\sig1$}
\pcline{->}(30,9)(30,1)\trput{$\sig1\sig3$}
\psline[style=thin](14,-4)(16,-2)
\psline[style=thin](14,-2)(16,-4)
\end{picture}
\end{minipage}

\rightskip0mm
For \VR(6,0) more results on the question, we refer to Chap.\,V of~\cite{Garside}, where the notion of a \emph{bounded} Garside family is introduced, and where it is proved that the right domino rule and the counterpart of Prop.\,\ref{P:RecipeRD} are valid, whenever $\SSS$ is a bounded Garside family.

\pagebreak

\subsection{Existence of $\SSS$-normal decompositions}\label{SS:Rec}

The notion of a Garside family is useful only if we can provide practical characterisations, which amounts to giving sufficient conditions for $\SSS$-normal decompositions to exist. A number of such characterisations are known \cite[Chap.\,IV]{Garside}, and we shall only mention a few of them.

Two types of characterisations exist, according to whether the ambient mon\-oid satisfies or not additional conditions. Let us begin with the case of a monoid that is just assumed to be left-cancellative and, in this paper, to admit no nontrivial invertible element. In order to state the results, we need two definitions.

\begin{defi}
\rm If $\MM$ is a left-cancellative monoid, $\SSS$ is included in~$\MM$, and $\gg$ is an element of~$\MM$, then an element~$\sss$ of~$\SSS$ is said to be an \emph{$\SSS$-head} of~$\gg$ if we have $\sss \dive \gg$ and $\forall\ttt{\in}\SSS\,(\ttt \dive \gg \Rightarrow \ttt \dive \sss)$.
\end{defi}

In other words, an $\SSS$-head of~$\gg$ is a greatest left-divisor of~$\gg$ lying in~$\SSS$. An $\SSS$-head is unique whenever the ambient monoid~$\MM$ has no nontrivial invertible element: if $\sss$ and~$\sss'$ are $\SSS$-heads of~$\gg$, the definition implies $\sss \dive \sss'$ and $\sss' \dive \sss$, whence $\sss' = \sss$. If $(\MM, \Delta)$ is a Garside monoid, the $\Div(\Delta)$-head of an element~$\gg$ exists and is simply the left-gcd of~$\gg$ and~$\Delta$, as considered in Lemma~\ref{L:Head}.

\noindent\begin{minipage}{\textwidth}
\begin{defi}\label{D:RMClosed}
\rightskip41mm
If $\MM$ is \VR(8,0) a left-cancellative monoid and $\SSS$ is included in~$\MM$, we say that $\SSS$ is \emph{closed under right-comultiple} if the relation\\
\begin{picture}(0,10)(0,-1)
\put(7,4.5){$\forall\sss, \ttt {\in} \SSS\ \forall\gg{\in} \MM \, ((\sss \dive \gg \text{\ and\ } \ttt \dive \gg)$}
\put(20,0){$\Rightarrow \exists \rr {\in} \SSS\, (\sss \dive\rr  \text{\ and\ } \ttt \dive \rr \text{\ and\ } \rr \dive \gg))$}
\end{picture}\\
is satisfied in~$\MM$, as illustrated on the right.\hfill%
\begin{picture}(0,0)(-8,-2)
\pcline{->}(1,20)(14,20)
\taput{$\ttt$}
\pcline{->}(0,19)(0,11)
\tlput{$\sss$}
\pcline(0,9)(0,7)
\psarc(7,7){7}{180}{270}
\pcline{->}(7,0)(29,0)
\pcline(16,20)(23,20)
\psarc(23,13){7}{0}{90}
\pcline{->}(30,13)(30,1)
\pcline[style=exist]{->}(1,10)(14,10)
\pcline[style=exist]{->}(15,19)(15,11)
\pcline[style=exist]{->}(16,9)(29,0.5)
\pcline[style=exist]{->}(1,19)(14,11)
\put(7,16){$\rr{\in}\SSS$}
\end{picture}
\end{defi}
\end{minipage}

Thus, \VR(8,0) a family~$\SSS$ is closed under right-comultiple if every common right-multiple of two elements~$\sss, \ttt$ of~$\SSS$ is a right-multiple of some common right-multiple of~$\sss$ and~$\ttt$ that lies in~$\SSS$. Finally, we naturally say that a family~$\SSS$ is \emph{closed under right-divisor} if every right-divisor of an element of~$\SSS$ belongs to~$\SSS$.

\begin{prop}\label{P:RecGarII} {\rm\cite[Prop.\,3.9]{Dif} or \cite[Prop.\,IV.1.24]{Garside}}
Assume that $\MM$ is a left-cancellative monoid with no nontrivial invertible element, and $\SSS$ is a generating subfamily of~$\MM$ that contains~$1$. Then $\SSS$ is a Garside family in~$\MM$---that is, every element of~$\MM$ admits an $\SSS$-normal decomposition---if, and only if, it satisfies one of the following equivalent conditions:

\ITEM1 Every nontrivial element of $\MM$ admits an $\SSS$-head, and $\SSS$ is closed under right-divisor.

\ITEM2 Every element of $\SSS^2$ admits a $\divs$-maximal left-divisor in~$\SSS$, and $\SSS$ is closed under right-comultiple and 
right-divisor.
\end{prop}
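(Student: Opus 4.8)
The plan is to prove the cycle of implications $[\SSS\text{ is a Garside family}]\Rightarrow\ITEM1\Rightarrow\ITEM2\Rightarrow[\SSS\text{ is a Garside family}]$; since closure under right-divisor appears in both \ITEM1 and \ITEM2, this makes all the properties equivalent. Throughout I write $\HH(\gg)$ for an $\SSS$-head of~$\gg$ and use constantly that~$\MM$ has no nontrivial invertible element, so $\ff\dive\gg$ and $\gg\dive\ff$ force $\ff=\gg$. First I would treat $[\text{Garside family}]\Rightarrow\ITEM1$. For the head, take a strict $\SSS$-normal decomposition $\sss_1\sep\pdots\sep\sss_\pp$ of a nontrivial~$\gg$ and show $\sss_1=\HH(\gg)$: clearly $\sss_1\dive\gg$, and if $\ttt\in\SSS$ satisfies $\ttt\dive\gg=\sss_1\pdots\sss_\pp$, then applying the defining implication~\eqref{E:NormalS} to the pair $\sss_{\pp-1}\sep\sss_\pp$ (with $\ff=\sss_1\pdots\sss_{\pp-2}$), then to the previous pair, and so on, peels off the factors and yields $\ttt\dive\sss_1$, exactly as in the proof of Lemma~\ref{L:PsiS}. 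For closure under right-divisor, suppose $\sss=\qq\rr$ with $\sss\in\SSS$, and take a strict normal decomposition $\rr_1\sep\pdots\sep\rr_\mm$ of~$\rr$; if $\mm\ge2$, applying~\eqref{E:NormalS} to $\rr_{\mm-1}\sep\rr_\mm$ with $\ff=\qq\rr_1\pdots\rr_{\mm-2}$ and test-divisor $\sss$ gives $\sss\dive\qq\rr_1\pdots\rr_{\mm-1}$, and as the reverse divisibility is obvious, left-cancellation forces $\rr_\mm=1$, a contradiction; hence $\mm\le1$ and $\rr\in\SSS$.

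The implication $\ITEM1\Rightarrow\ITEM2$ is the easy step. Closure under right-divisor is shared. An $\SSS$-head is a fortiori a $\divs$-maximal $\SSS$-left-divisor, so every element of~$\SSS^2\subseteq\MM$ has one. For closure under right-comultiple, if $\sss,\ttt\in\SSS$ both left-divide~$\gg$, then $\rr:=\HH(\gg)$ lies in~$\SSS$, satisfies $\rr\dive\gg$, and absorbs both (since $\sss,\ttt\in\SSS$ and $\sss,\ttt\dive\gg$ give $\sss,\ttt\dive\HH(\gg)$); thus $\rr$ is the required common right-multiple lying below~$\gg$.

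The heart of the matter, and the main obstacle, is $\ITEM2\Rightarrow[\text{Garside family}]$: manufacturing genuine $\SSS$-normal decompositions from the purely local data of~\ITEM2. I would proceed in four steps, the recurring device being always the same --- produce a common right-multiple in~$\SSS$, peel a leading $\SSS$-letter by left-cancellation, and keep the complement in~$\SSS$ by right-divisor closure. (a) Upgrade ``maximal'' to ``greatest'' on~$\SSS^2$: if $\hh$ is a $\divs$-maximal $\SSS$-left-divisor of $\sss_1\sss_2$ and $\sss\in\SSS$ also left-divides it, right-comultiple closure yields $\rr\in\SSS$ with $\hh\dive\rr\dive\sss_1\sss_2$ and $\sss\dive\rr$; maximality forces $\rr=\hh$, whence $\sss\dive\hh$. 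So every element of~$\SSS^2$ has an $\SSS$-head. (b) The fold identity: for $\uu\in\SSS$ and any~$\gg'$ with head $\ee=\HH(\gg')$, the head $\HH(\uu\gg')$ exists and equals $\HH(\uu\ee)$. The nontrivial inclusion is that $\sss\in\SSS$, $\sss\dive\uu\gg'$ imply $\sss\dive\uu\ee$: as $\sss,\uu\in\SSS$ both left-divide $\uu\gg'$, right-comultiple closure gives $\rr=\uu\rr'\in\SSS$ with $\sss\dive\rr\dive\uu\gg'$, where $\rr'\in\SSS$ by right-divisor closure; left-cancelling~$\uu$ gives $\rr'\dive\gg'$, hence $\rr'\dive\ee$ and $\sss\dive\rr=\uu\rr'\dive\uu\ee$. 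Folding this along an $\SSS$-decomposition $\gg=\uu_1\pdots\uu_\nn$ proves, by induction on~$\nn$, that every element of~$\MM$ has an $\SSS$-head.

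(c) For termination I would carry the induction so as to control the tail. Writing $\gg=\HH(\gg)\,\gg_1$, the explicit shape from the fold identity --- namely $\HH(\gg)=\uu_1\kk$ with $\kk\in\SSS$, and $\gg_1=\ee'\gg''$ where $\ee'$ right-divides the head $\ee$ of $\uu_2\pdots\uu_\nn$ and $\gg''$ is the tail of that shorter element --- gives $\LG{\gg_1}\le\LG\gg-1$, so the greedy extraction of heads halts and returns a decomposition $\sss_1\sep\pdots\sep\sss_\pp$ with $\sss_\ii=\HH(\sss_\ii\sss_{\ii+1})$ for each~$\ii$. (d) Finally I must verify this head condition is the true $\SSS$-normality of Def.~\ref{D:NormalS}, i.e.\ that the auxiliary factor~$\ff$ in~\eqref{E:NormalS} can be absorbed: given $\sss\dive\ff\sss_1\sss_2$ with $\sss_1=\HH(\sss_1\sss_2)$, I would induct on $\LG\ff$, writing $\ff=\vv\ff'$ and running the same peel-and-cancel trick (right-comultiple to get $\rr=\vv\rr'\in\SSS$ above~$\vv$ and below $\ff\sss_1\sss_2$, right-divisor closure to keep $\rr'\in\SSS$, then the induction hypothesis on~$\ff'$) to conclude $\sss\dive\ff\sss_1$. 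This makes the greedy decomposition $\SSS$-normal and closes the cycle. The delicate point is precisely the interplay in (b)--(d) between the two closure hypotheses, and especially the tail-length bound of~(c) that guarantees finiteness --- the one place where \ITEM2, unlike the comfortable Garside-monoid setting, must be made to yield termination by hand.
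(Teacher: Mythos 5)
Your proof is correct. Note that the survey itself contains no proof of this proposition---it defers to \cite[Prop.\,3.9]{Dif} and \cite[Prop.\,IV.1.24]{Garside}---so what you have produced is a self-contained reconstruction, and every step checks out: the peeling argument for the direction [Garside family] $\Rightarrow$ \ITEM1 is exactly the mechanism of Lemma~\ref{L:PsiS} (including the left-cancellation trick forcing $\rr_\mm = 1$ for right-divisor closure); step (a) correctly upgrades $\divs$-maximality to a genuine $\SSS$-head using right-comultiple closure plus the absence of nontrivial invertible elements; your fold identity $\HH(\uu\gg') = \HH(\uu\,\HH(\gg'))$ is the ``head of a product'' law that drives the argument in the cited sources; and step (d) is the right general-monoid substitute for the right-complement computation of Ex.\,\ref{X:Connection}---that computation needs lcms, which \ITEM2 does not provide, and your peel-and-cancel induction on an $\SSS$-decomposition of~$\ff$ is precisely the kind of argument alluded to after Prop.\,\ref{P:Connection2}. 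Two presentational cautions, neither a gap. First, $\LG\gg$ is defined in the paper only for words, so in steps (c)--(d) your inductions should be phrased on the length of a \emph{chosen} $\SSS$-decomposition (well-defined since $\SSS$ generates~$\MM$); for (c) the clean statement is ``if $\gg$ admits an $\SSS$-decomposition of length~$\nn$, then $\gg_1$ admits one of length at most $\nn - 1$,'' which is exactly what your computation ($\HH(\gg) = \uu_1\kk$ with $\kk\in\SSS$, $\ee = \kk\ee'$, $\gg_1 = \ee'\gg''$, with $\kk$ and~$\ee'$ kept in~$\SSS$ by right-divisor closure) establishes, and it is the one place where termination must indeed be earned by hand, as you say. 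Second, the strict normal decompositions invoked in the first implication exist because in an $\SSS$-normal word a trivial entry forces all subsequent entries to be trivial, so final letters~$1$ can be stripped---standard, and consistent with the remark following Lemma~\ref{L:Uniqueness}, but worth a sentence.
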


The conditions of Prop.\,\ref{P:RecGarII} are not demanding: very little is required for the existence of $\SSS$-normal decompositions. The difference between~\ITEM1 and~\ITEM2 is that, in~\ITEM2, the existence of an $\SSS$-head is relaxed twice: one considers elements of~$\SSS^2$ (that is, elements that can be expressed as the product of two elements of~$\SSS$) rather than arbitrary elements, and $\divs$-maximal left-divisors, which is weaker than $\dive$-greatest left-divisors, since it amounts to replacing $\sss \dive \ttt$ with $\ttt \not\divs \sss$.

\begin{exam}
\rm Prop.\,\ref{P:RecGarII}\ITEM1 makes it straightforward that, if $(\MM, \Delta)$ is a Garside monoid, $\Div(\Delta)$ is a Garside family: as noted above, the left-gcd of~$\gg$ and~$\Delta$ is a $\Div(\Delta)$-head of~$\gg$ and, by definition, $\Div(\Delta)$ is closed under right-divisor.

The argument is similar for the family $\Div(\Delta)$ in the Klein bottle monoid~$K^+$ of Example~\ref{X:GarFam}, but one easily finds examples of a completely different flavour. For instance, the reader can play with the family $\{\ttb^\ii \mid 0 \le \ii \le \nn+1\} \cup \{\tta\}$ in the monoid $\MON{\tta, \ttb}{\tta\ttb^\nn = \ttb^{\nn+1}}$ with $\nn \ge 1$.  
\end{exam}

When the ambient monoid satisfies additional assumptions, the conditions of Prop.\,\ref{P:RecGarII} can still be weakened.

\begin{defi}
\rm A left-cancellative monoid is called \emph{right-noetherian} if there is no infinite descending sequence with respect to strict right-divisibility.
\end{defi}

Equivalently, a left-cancellative monoid is right-noetherian if, and only if, there is no infinite bounded ascending sequence with respect to strict left-divisibility, meaning that $\gg_1 \divs \gg_2 \divs \pdots \dive \gg$ is impossible. When a monoid is right-noetherian, the existence of $\divs$-maximal elements is for free, and we deduce

\enlargethispage{7mm}

\begin{coro}\label{C:Noeth}
Assume that $\MM$ is a right-noetherian left-cancellative monoid with no nontrivial invertible element, and $\SSS$ is a generating subfamily of~$\MM$ that contains~$1$. Then $\SSS$ is a Garside family if, and only if, $\SSS$ is closed under right-comultiple and right-divisor.
\end{coro}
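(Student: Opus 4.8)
The plan is to read the statement as the special case of Proposition~\ref{P:RecGarII}\ITEM2 in which the first of its three requirements becomes automatic. Recall that condition \ITEM2 there asks that (a)~every element of~$\SSS^2$ admit a $\divs$-maximal left-divisor in~$\SSS$, that (b)~$\SSS$ be closed under right-comultiple, and that (c)~$\SSS$ be closed under right-divisor, and that, for a generating family containing~$1$, these three conditions together characterise Garside families. Hence it suffices to prove that, under right-noetherianity, requirement (a) holds for \emph{every} element of~$\MM$, and in particular for every element of~$\SSS^2$. Once this is done, being a Garside family is equivalent to (b) and (c) alone, which is exactly the asserted equivalence.

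First I would record the reformulation of right-noetherianity given just before the statement: a left-cancellative monoid is right-noetherian if, and only if, there is no infinite sequence $\gg_1 \divs \gg_2 \divs \cdots$ of elements all left-dividing one fixed element, that is, no bounded strictly ascending left-divisibility chain. I would then fix an arbitrary $\gg$ in~$\MM$ and consider the set $D$ of all elements of~$\SSS$ that left-divide~$\gg$. Since $\SSS$ contains~$1$ and $1 \dive \gg$, the set~$D$ is nonempty.

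Next I would show that $D$ admits a $\divs$-maximal element by the standard ascending-chain argument. If it did not, then for each~$\sss$ in~$D$ there would exist~$\sss'$ in~$D$ with $\sss \divs \sss'$; iterating (by dependent choice) yields an infinite strictly ascending chain $\sss_1 \divs \sss_2 \divs \cdots$ lying in~$D$, each term of which left-divides~$\gg$. This is precisely a bounded strictly ascending left-divisibility chain, contradicting right-noetherianity. Therefore $D$ has a $\divs$-maximal element, which by definition is a $\divs$-maximal left-divisor of~$\gg$ lying in~$\SSS$. Specialising to $\gg$ in~$\SSS^2$ secures requirement (a) at no cost.

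Finally I would invoke Proposition~\ref{P:RecGarII}\ITEM2: since (a) now holds automatically, the characterisation collapses to the equivalence between $\SSS$ being a Garside family and $\SSS$ being closed under both right-comultiple and right-divisor, as claimed. I do not anticipate a genuine obstacle here, the corollary being by design a direct consequence of the proposition. The only points asking for minimal care are verifying that the chain produced is indeed bounded by~$\gg$, so that the correct formulation of noetherianity applies, and keeping in mind that, with no nontrivial invertible element, $\divs$-maximality is weaker than $\dive$-greatestness, which is exactly why part~\ITEM2 of Proposition~\ref{P:RecGarII}, rather than part~\ITEM1, is the tool one should feed the noetherian argument into.
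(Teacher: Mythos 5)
Your proposal is correct and matches the paper's own (implicit) argument: the text preceding the corollary notes precisely that right-noetherianity, in its reformulation forbidding bounded strictly ascending left-divisibility chains, makes the existence of $\divs$-maximal left-divisors in~$\SSS$ automatic, so that Prop.\,\ref{P:RecGarII}\ITEM2 reduces to closure under right-comultiple and right-divisor. You merely spell out the ascending-chain argument that the paper leaves as ``for free'', including the correct attention to boundedness by~$\gg$ and to the $\divs$-maximal (rather than $\dive$-greatest) formulation.
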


The criterion can be further improved as, for the ambient monoid to be right-noetherian, it is sufficient that the restriction of right-divisibility to the considered family~$\SSS$ is, a trivial condition when $\SSS$ is finite, see~\cite[Prop.\,IV.2.18]{Garside}.

Finally, things become even more simple when the ambient monoid admits \emph{conditional right-lcms}, meaning that any two elements that admit a common right-multiple admit a right-lcm. Then closure under right-comultiple boils down to closure under right-lcm (that is, the right-lcm of two elements of~$\SSS$ belongs to~$\SSS$ when it exists), and we obtain

\begin{coro}\label{C:Lcm}
Assume that $\MM$ is a left-cancellative monoid that is right-noeth\-er\-ian, admits conditional right-lcms, and contains no nontrivial invertible element, and $\SSS$ is a generating subfamily of~$\MM$ that contains~$1$. Then $\SSS$ is a Garside family  if, and only if, $\SSS$ is closed under right-lcm and right-divisor.
\end{coro}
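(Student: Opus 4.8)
The plan is to invoke Corollary~\ref{C:Noeth}, whose hypotheses coincide exactly with the present ones except for the extra assumption that $\MM$ admits conditional right-lcms. Since that corollary already characterises Garside families as those generating subfamilies containing~$1$ that are closed under right-comultiple and right-divisor, it suffices to prove that, \emph{under the additional hypothesis of conditional right-lcms}, closure under right-comultiple is equivalent to closure under right-lcm. Establishing this single equivalence then yields the statement at once, the closure under right-divisor being common to both formulations.

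Before starting I would record an elementary remark used twice below: in a left-cancellative monoid with no nontrivial invertible element, $\rr \dive \mm$ and $\mm \dive \rr$ together force $\rr = \mm$. Indeed, writing $\mm = \rr\xx$ and $\rr = \mm\yy$ gives $\mm = \mm\yy\xx$, whence $\yy\xx = 1$ by left-cancellation, so $\xx$ and $\yy$ are invertible, hence both trivial. In particular a right-lcm, when it exists, is unique, and by its very definition it is a common right-multiple that left-divides every common right-multiple of the two given elements.

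For the direction ``right-comultiple $\Rightarrow$ right-lcm'', I would take $\sss, \ttt$ in~$\SSS$ whose right-lcm~$\mm$ exists. Then $\mm$ is itself a common right-multiple of~$\sss$ and~$\ttt$, so closure under right-comultiple (Def.\,\ref{D:RMClosed}, applied with $\gg = \mm$) supplies an~$\rr$ in~$\SSS$ with $\sss \dive \rr$, $\ttt \dive \rr$, and $\rr \dive \mm$. Since $\rr$ is a common right-multiple of~$\sss$ and~$\ttt$ while $\mm$ is the least such, we also have $\mm \dive \rr$; the remark above gives $\rr = \mm$, so $\mm$ lies in~$\SSS$, proving closure under right-lcm. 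For the converse, I would take $\sss, \ttt$ in~$\SSS$ and any common right-multiple~$\gg$. Conditional right-lcms guarantees that the right-lcm~$\mm$ of~$\sss$ and~$\ttt$ exists, closure under right-lcm yields $\mm \in \SSS$, and the defining property of the lcm provides $\sss \dive \mm$, $\ttt \dive \mm$, and $\mm \dive \gg$. Hence $\rr := \mm$ witnesses closure under right-comultiple.

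I expect no real obstacle here: the whole argument is a short dictionary translation once Corollary~\ref{C:Noeth} is granted, and the only point needing a little care is the passage between ``some common right-multiple in~$\SSS$ dividing~$\gg$'' and ``the right-lcm lies in~$\SSS$'', which is precisely where the minimality of the lcm and the absence of nontrivial invertible elements (ensuring uniqueness via the remark) are used.
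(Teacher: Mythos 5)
Your proposal is correct and follows exactly the paper's route: the text introduces Cor.\,\ref{C:Lcm} precisely by observing that, in the presence of conditional right-lcms, closure under right-comultiple ``boils down to'' closure under right-lcm, and then invoking Cor.\,\ref{C:Noeth}. Your write-up merely supplies the details of that equivalence (including the uniqueness-of-lcm remark, where $\yy\xx = 1$ forces $\xx\yy = 1$ by the idempotent argument in a left-cancellative monoid), all of which is sound.
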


Thus, in the context of Cor.\,\ref{C:Lcm}, being a Garside family is a closure property. It follows that, for every generating set~$\At$, there exists a smallest Garside family~$\SSS$ that includes~$\At$, namely the closure of~$\At$ under right-lcm and right-divisor. When the ambient monoid is noetherian (meaning left- and right-noetherian), it admits a smallest generating family, namely the family of atoms (indecomposable elements), and therefore it admits a smallest Garside family, the closure of atoms under right-lcm and right-divisor. A typical example is the family~$\Div(\Delta)$ in a Garside monoid (with $\Delta$ chosen minimal), but another example is the Garside family of Fig.\,\ref{F:Artin} in the Artin--Tits monoid of type~$\Att$.

\pagebreak

Let us mention a last result. We observed that the definition of an $\SSS$-normal sequence in~\eqref{E:NormalS} is a priori more demanding than that of~\eqref{E:NormalD}. It turns out that, when $\SSS$ satisfies convenient conditions, the conditions become equivalent:

\begin{prop} {\rm\cite[Prop.\,IV.1.20]{Garside}} \label{P:Connection2}
Assume that $\MM$ is a left-cancellative mon\-oid with no nontrivial invertible element, and $\SSS$ is a generating family of~$\MM$ that is closed under right-comultiple and right-divisor. Then an $\SSS$-word $\sss_1 \sep \pdots \sep \sss_\pp$ is $\SSS$-normal if, and only if, it satisfies the condition
\begin{equation}
\forall \sss {\in}Ê\SSS \  (\, \sss_\ii \divs \sss \ \Rightarrow\ \sss \notdive \sss_\ii\sss_{\ii+1}\pdots \sss_\pp\, ).
\end{equation}
\end{prop}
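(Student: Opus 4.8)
The plan is to prove the two implications separately, the forward one being essentially the content of Example~\ref{X:Connection} and the backward one requiring the closure hypotheses. First I would observe that the condition displayed in the statement is precisely~\eqref{E:NormalD}, just written with a general generating family~$\SSS$ in place of~$\Div(\Delta)$, and that it is a local condition on consecutive pairs $\sss_\ii \sep \sss_{\ii+1}$; likewise, $\SSS$-normality in the sense of Def.\,\ref{D:NormalS} is local. So it suffices to prove the equivalence for a length-two word $\sss_1 \sep \sss_2$, and the general case follows factor by factor.

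For the easy direction, suppose $\sss_1 \sep \sss_2$ is $\SSS$-normal in the sense of~\eqref{E:NormalS}. Taking $\ff = 1$ in~\eqref{E:NormalS} immediately gives that every $\sss$ in~$\SSS$ with $\sss \dive \sss_1 \sss_2$ satisfies $\sss \dive \sss_1$; in particular no such~$\sss$ can satisfy $\sss_1 \divs \sss$, which is exactly the displayed maximality condition. Thus $\SSS$-normality always implies the weaker condition, and this uses neither closure hypothesis.

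For the converse, I would reproduce the diagram-chase of Example~\ref{X:Connection}, replacing the Garside right-complement by the closure assumptions. Assume the displayed condition holds and that $\sss \dive \ff \sss_1 \sss_2$ for some $\sss$ in~$\SSS$ and $\ff$ in~$\MM$; the goal is $\sss \dive \ff \sss_1$. Since $\sss$ and $\ff$ both left-divide $\ff\sss_1\sss_2$, the plan is to extract from the pair $(\sss, \ff)$ a suitable ``residue'' element~$\rr$ of~$\SSS$ left-dividing $\sss_1 \sss_2$ and then apply the maximality condition to $\rr$. Concretely, closure under right-comultiple applied to $\sss$ and $\ff$ (both dividing the common multiple $\ff\sss_1\sss_2$) yields an element $\rr$ of~$\SSS$ that is a common right-multiple of an appropriate piece and still left-divides $\sss_1 \sss_2$ after cancelling $\ff$; closure under right-divisor guarantees that the cofactor landing on the $\sss_1\sss_2$ side again lies in~$\SSS$. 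One then uses the maximality condition~\eqref{E:NormalD} to force this residue to left-divide $\sss_1$, and arguing back (re-multiplying by the cancelled piece) recovers $\sss \dive \ff \sss_1$, as desired.

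The main obstacle is exactly this extraction step: in the Garside case one has the right-complement operation $\under$ and the law~\eqref{E:RrightCompl} to speak of $\ff \under \sss$ as a genuine element, whereas here one only knows that common right-multiples of elements of~$\SSS$ factor through an element of~$\SSS$. So the delicate point is to show that the relevant ``complement of $\ff$ in $\sss$'' again lands in~$\SSS$ and left-divides $\sss_1\sss_2$; this is precisely what closure under right-comultiple (to produce a witness in~$\SSS$ below $\ff\sss_1\sss_2$) together with closure under right-divisor (to keep the cofactor in~$\SSS$) are designed to supply. Once this residue is available in~$\SSS$, the maximality hypothesis closes the argument as in Example~\ref{X:Connection}, and I would simply reference that diagram rather than redraw it.
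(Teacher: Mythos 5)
Your overall architecture matches the paper's sketch (the paper itself only says that necessity was already observed and that sufficiency ``follows from arguments extending those of Ex.~\ref{X:Connection}''), and you correctly identify that the right-complement operation must be replaced by the two closure hypotheses. But the converse as you describe it has a genuine gap at exactly the step you flag and do not resolve. Closure under right-comultiple (Def.~\ref{D:RMClosed}) applies only to a pair of elements \emph{of~$\SSS$}; you propose to apply it ``to $\sss$ and $\ff$'', but $\ff$ is an arbitrary element of~$\MM$, so this application is illegitimate, and no single invocation of the closure property can produce your ``residue''. The missing idea is an induction on the length of an $\SSS$-decomposition of~$\ff$ --- this is precisely where the hypothesis that $\SSS$ \emph{generates}~$\MM$ enters. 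Write $\ff = \ttt_1 \pdots \ttt_\nn$ with each~$\ttt_\jj$ in~$\SSS$. Base case $\nn = 0$: if $\sss \in \SSS$ and $\sss \dive \sss_1\sss_2$, apply the closure to the pair $(\sss, \sss_1)$, both in~$\SSS$ and both left-dividing~$\sss_1\sss_2$, to get $\rr \in \SSS$ with $\sss \dive \rr$, $\sss_1 \dive \rr$, and $\rr \dive \sss_1\sss_2$; the greedy condition forbids $\sss_1 \divs \rr$, so $\rr = \sss_1$ (no nontrivial invertible elements), whence $\sss \dive \sss_1$. Inductive step: from $\sss \dive \ff\sss_1\sss_2$, apply the closure to $(\sss, \ttt_1)$ to get $\rr \in \SSS$ with $\sss \dive \rr$, $\ttt_1 \dive \rr$, $\rr \dive \ff\sss_1\sss_2$; write $\rr = \ttt_1\rr'$ and note $\rr' \in \SSS$ by closure under right-divisor; left-cancel~$\ttt_1$ to get $\rr' \dive \ttt_2 \pdots \ttt_\nn \sss_1\sss_2$; the induction hypothesis gives $\rr' \dive \ttt_2 \pdots \ttt_\nn \sss_1$, and re-multiplying yields $\sss \dive \rr \dive \ff\sss_1$. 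Your text gestures at both closure properties but never decomposes~$\ff$, so the ``extraction'' never produces an element to which the hypotheses actually apply.

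A secondary inaccuracy: your localisation plus ``take $\ff = 1$'' does not give the stated necessity, because the displayed condition concerns the whole tail $\sss_\ii\sss_{\ii+1}\pdots\sss_\pp$, not just the pair $\sss_\ii \sep \sss_{\ii+1}$. This is easily repaired, and without any closure hypothesis: if $\sss \in \SSS$ satisfies $\sss_\ii \divs \sss$ and $\sss \dive \sss_\ii \pdots \sss_\pp$, iterate~\eqref{E:NormalS} from the right, taking as left factor the intermediate products $\sss_\ii \pdots \sss_{\jj-1}$, to contract down to $\sss \dive \sss_\ii$, contradicting strictness. Conversely, the tail condition trivially implies the pair condition (any divisor of $\sss_\ii\sss_{\ii+1}$ divides the tail), so the reduction to length two is only needed, and only valid, in the direction used for sufficiency. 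With these two repairs --- the iteration for necessity and, crucially, the induction on an $\SSS$-decomposition of~$\ff$ for sufficiency --- your plan becomes the intended proof.
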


We already observed that the condition is necessary. That it is sufficient follows from arguments extending those of Ex.\,\ref{X:Connection}. Note that, by Prop.\,\ref{P:RecGarII}, every Garside family satisfies the assumptions of Prop.\,\ref{P:Connection2} and, therefore, the connection is valid in this case.

\begin{rema}
\rm Once again, we can think of extending the results from the monoid to its enveloping group. Here, some care is needed as, in general, a left-cancellative monoid (even a cancellative one) need not embed in a group of left fractions: by the classical Ore theorem, this happens if, and only if, the monoid~$\MM$ is cancellative and any two elements of~$\MM$ admit a common left-multiple. But, even in this case, the existence of unique $\SSS$-normal decompositions in~$\MM$ does not directly lead to unique distinguished decompositions for the elements of its group of fractions, because fractional decompositions need not be unique. However, when the monoid~$\MM$ admits left-lcms, a notion of irreducible fraction arises and one obtains unique decompositions (called ``symmetric $\SSS$-normal'')  for the elements of the group by using $\SSS$-normal decompositions for the numerator and the denominator of an irreducible fractional decomposition, see~\cite[Sec.\,III.2]{Garside}.
\end{rema}

\section{Quadratic normalisation}\label{S:NormalQ}

Proceeding one step further, we now consider more general normalisation processes that include those of the previous sections, but also new examples of a different flavour. However, we shall see that the mechanism of Garside normalisation, as captured in Prop.\,\ref{P:RecipeS}, can be retrieved in the more general framework of what we shall call ``quadratic normalisations of class~$(4,3)$''. One of the benefits of such an extended approach is that some monoids that are not even left-cancellative, like plactic monoids, become in turn eligible.

\subsection{Normalisations and geodesic normal forms}\label{SS:NormSys}

We now restart from a general standpoint and consider (not necessarily cancellative) monoids equipped with a generating family. We are interested in normal forms in such monoids, according to the following abstract scheme:

\begin{defi}\label{D:NF}
\rm Assume that $\MM$ is a monoid and $\SSS$ is a generating subfamily of~$\MM$. A \emph{normal form on $(\MM,\SSS)$} is a (set-theoretic) section of the canonical projection~$\ev$ of~$\SSS^*$ onto~$\MM$. A normal form~$\NF$ on $(\MM,\SSS)$ is called \emph{geodesic} if, for every~$\gg$ in~$\MM$, we have $\LG{\NF(\gg)} \le \LG\ww$ for every $\SSS$-word~$\ww$ representing~$\gg$. 
\end{defi}

Typically, we saw in Sec.\,\ref{S:NormalS} that, if $\MM$ is left-cancellative with no nontrivial invertible element, every Garside family~$\SSS$ of~$\MM$ provides a normal form on~$(\MM, \SSS)$, associating with every element~$\gg$ of~$\MM$ the unique strict $\SSS$-normal decompositon of~$\gg$. This normal form is geodesic, since, by Prop.\,\ref{P:RecipeS},  the $\SSS$-normal form of an element specified by an $\SSS$-word of length~$\pp$ has length at most~$\pp$. 

As already done in Sec.\,\ref{S:NormalD} and~\ref{S:NormalS}, we shall rather work with words, and concentrate on the normalisation maps that associate to an arbitrary word the unique equivalent normal word. This leads us to the following notion.

\begin{defi}\label{D:NormSys}
\rm A \emph{normalisation} is a pair $(\SSS,\NM)$, where~$\SSS$ is a set and~$\NM$ is a map from~$\SSS^*$ to itself satisfying, for all $\SSS$-words~$\uu, \vv, \ww$,
\begin{gather}
\label{E:NormSys1}
\LG{\NM(\ww)} = \LG\ww, \\
\label{E:NormSys2}
\LG\ww = 1 \text{\ implies\ } \NM(\ww) = \ww, \\
\label{E:NormSys3}
\NM(\uu \sep \NM(\ww) \sep \vv)=\NM(\uu \sep \ww \sep \vv).
\end{gather}
An $\SSS$-word~$\ww$ satisfying $\NM(\ww)=\ww$ is called \emph{$\NM$-normal}. If~$\MM$ is a monoid, we say that $(\SSS, \NM)$ is a normalisation \emph{for~$\MM$} if~$\MM$ admits the presentation
\begin{equation}\label{E:NormSys4}
\MON{\SSS}{\{\ww = \NM(\ww) \mid \ww \in \SSS^*\}}.
\end{equation}
\end{defi}

Note \VR(5,0) that~\eqref{E:NormSys3} implies that~$\NM$ is idempotent. We shall see below that the maps~$\NMD$ and~$\NMS$ considered in Sec.\,\ref{S:NormalD} and~\ref{S:NormalS} are typical examples of normalisations. Many others appear in~\cite{Dip}. The connection between normalisations and normal forms is easily described, especially in the case when all equivalent $\SSS$-words have the same length.

\begin{prop}\label{P:NF} {\rm\cite[Prop.\,2.1.12]{Dip}}
If $(\SSS,\NM)$ is a normalisation for a mon\-oid~$\MM$, then putting $\NF(\gg) = \NM(\ww)$, where $\ww$ is any $\SSS$-decomposition of~$\gg$, provides a normal form on~$(\MM,\SSS)$.

Conversely, if $\MM$ is a monoid, $\SSS$ is a generating subfamily of~$\MM$, and $\NF$ is normal form on~$(\MM,\SSS)$, and, moreover, any two $\SSS$-decompositions of an element of~$\MM$ have the same length, then putting $\NM(\ww) = \NF(\ev(\ww))$ provides a normalisation for~$\MM$.
\end{prop}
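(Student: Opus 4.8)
The plan is to verify the two directions of Proposition~\ref{P:NF} by checking the defining conditions of the relevant notions directly. For the first direction, I start from a normalisation $(\SSS,\NM)$ for~$\MM$ and must show that $\NF(\gg) := \NM(\ww)$, for $\ww$ any $\SSS$-decomposition of~$\gg$, is a well-defined section of~$\ev$. First I would check \emph{well-definedness}: if $\ww$ and $\ww'$ are two $\SSS$-decompositions of the same element~$\gg$, then $\ev(\ww) = \ev(\ww') = \gg$, so $\ww = \ww'$ holds in~$\MM$, meaning $\ww$ and $\ww'$ are equivalent under the congruence generated by the relations $\{\vv = \NM(\vv)\}$ of the presentation~\eqref{E:NormSys4}. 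The key point is that applying~$\NM$ respects this congruence, so $\NM(\ww) = \NM(\ww')$; this is exactly what~\eqref{E:NormSys3} is designed to give, since rewriting any factor $\vv$ to $\NM(\vv)$ inside a word does not change its $\NM$-image. Once well-definedness holds, that $\NF$ is a section of~$\ev$ follows because $\NM(\ww)$ represents the same element as~$\ww$ (again a consequence of~\eqref{E:NormSys4}, where each relation $\ww=\NM(\ww)$ is valid in~$\MM$), so $\ev(\NF(\gg)) = \ev(\ww) = \gg$.

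For the converse, I start from a normal form~$\NF$ under the additional hypothesis that all $\SSS$-decompositions of a given element have equal length, and define $\NM(\ww) := \NF(\ev(\ww))$. I would verify the three axioms of Definition~\ref{D:NormSys} in turn. Condition~\eqref{E:NormSys1}, that $\LG{\NM(\ww)} = \LG\ww$, is precisely where the equal-length hypothesis is used: $\NM(\ww) = \NF(\ev(\ww))$ is an $\SSS$-decomposition of the same element as~$\ww$, hence has the same length as~$\ww$. Condition~\eqref{E:NormSys2} for length-one words requires $\NF(\ev(\sss)) = \sss$ for $\sss \in \SSS$; this follows since $\NF(\ev(\sss))$ is a decomposition of~$\ev(\sss)$ of length one (by the equal-length hypothesis), and the only length-one $\SSS$-word representing~$\ev(\sss)$ that $\NF$ can return is $\sss$ itself when $\ev$ is injective on letters---here I would note that one identifies letters via their images, or argue that any length-one decomposition of~$\ev(\sss)$ equals~$\sss$. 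Condition~\eqref{E:NormSys3} reduces to the observation that $\uu\sep\NM(\ww)\sep\vv$ and $\uu\sep\ww\sep\vv$ have the same image under~$\ev$, since $\NM(\ww)$ and~$\ww$ do; applying $\NF\comp\ev$ to both therefore yields the same result.

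I would then confirm that $(\SSS,\NM)$ is a normalisation \emph{for}~$\MM$, i.e. that~$\MM$ admits the presentation~\eqref{E:NormSys4}: the relations $\ww = \NM(\ww)$ all hold in~$\MM$ (both sides have image~$\ev(\ww)$), and conversely two words with equal $\NM$-image are identified, so the congruence generated by these relations is exactly $\ker(\ev)$.

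The main obstacle, and the only genuinely delicate point, is condition~\eqref{E:NormSys2} in the converse direction: one must rule out that a letter~$\sss$ equals~$1$ in~$\MM$ or coincides in~$\MM$ with another letter, so that $\NF(\ev(\sss))$ could in principle be a different length-one word. In the stated generality this requires reading~$\SSS$ as a genuine subset of~$\MM$ (so that $\ev$ is injective on~$\SSS$) and invoking the equal-length hypothesis to forbid length-zero outputs; I would make this identification explicit at the outset. The remaining verifications are routine manipulations of~$\ev$ and the presentation, so I would present them tersely.
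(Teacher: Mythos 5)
Your proof is correct: both directions are exactly the routine verification of Definitions~\ref{D:NF} and~\ref{D:NormSys} --- with \eqref{E:NormSys3} yielding well-definedness of $\NF$ through elementary rewriting steps in the presentation~\eqref{E:NormSys4}, and, in the converse, the equal-length hypothesis together with the injectivity of~$\ev$ on the subfamily~$\SSS$ (which also rules out a letter representing~$1$) handling condition~\eqref{E:NormSys2}, the one genuinely delicate point, which you correctly isolate. The survey itself omits the proof, deferring to \cite[Prop.\,2.1.12]{Dip}, and your argument is the intended direct verification, so there is nothing to object to.
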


Moreover, it is easily seen that the two correspondences of Prop.\,\ref{P:NF} are inverses of one another.

When the elements of~$\MM$ may admit $\SSS$-decompositions of different lengths (as in the case of a Garside family), more care is needed, but we can still merge unique normal forms and length-preserving normalisation maps at the expense of introducing a dummy letter that represents~$1$ and is eventually collapsed.

\begin{defi}\label{D:Neutral}
\rm If $(\SSS, \NM)$ is a normalisation, an element~$\ee$ of~$\SSS$ is called \emph{$\NM$-neutral} if, for every~$\SSS$-word~$\ww$, one has \begin{equation}\label{E:Neutral1}
\NM(\ww \sep \ee) = \NM(\ee \sep \ww) = \NM(\ww) \sep \ee.
\end{equation}
Then we write~$\coll_\ee$ for the action of erasing~$\ee$ in an $\SSS$-word. If $\MM$ is a monoid, we say that $(\SSS, \NM)$ is a normalisation \emph{for~$\MM$ mod~$\ee$} if $\MM$ admits the presentation 
\begin{equation}\label{E:Neutral2}
\MON{\SSS}{\{\ww = \NM(\ww) \mid \ww \in \SSS^*\} \cup \{\ee = 1\}}.
\end{equation}
\end{defi}

We invite the reader to check that, if $\SSS$ is a Garside family in a left-cancellative monoid~$\MM$ that admits no nontrivial invertible element, then $(\SSS, \NMS)$ is a normalisation for~$\MM$ mod~$1$ and the $\SSS$-normal words of Sec.\,\ref{S:NormalS} are the associated $\NMS$-normal words. 

\pagebreak

Then Prop.\,\ref{P:NF} extends in

\begin{prop}\label{P:GenNF} {\rm\cite[Prop.\,2.2.7]{Dip}}
If $\MM$ is a monoid and $(\SSS,\NM)$ is a normalisation for~$\MM$ mod~$\ee$,  putting $\NF(\gg) = \coll_\ee(\NM(\ww))$, where $\ww$ is any $\SSS$-decomposition of~$\gg$, provides a geodesic normal form on~$(\MM,\SSS \setminus \{\ee\})$.

Conversely, if $\MM$ is a monoid, $\SSS$ generates~$\MM$, and~$\NF$ is a geodesic normal form on~$(\MM,\SSS)$, putting $\NM(\ww) = \NF(\ev(\ww))  \sep \ee^\mm$, with $\ee$ a new letter not in~$\SSS$ evaluated to~$1$ in~$\MM$ and $\mm = \LG\ww - \LG{\NF(\ev(\ww))}$, provides a normalisation for~$\MM$ mod~$\ee$ (with alphabet $\SSS \cup \{\ee\})$.
\end{prop}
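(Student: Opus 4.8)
The plan is to prove the two halves separately by direct verification; throughout I treat $\ee$ as $\NM$-neutral, i.e. as satisfying~\eqref{E:Neutral1}, which is the natural content of the ``mod~$\ee$'' data. For the forward direction, the only genuine issue is that $\gg \mapsto \coll_\ee(\NM(\ww))$ must not depend on the chosen $\SSS$-decomposition~$\ww$ of~$\gg$; everything else is then immediate. Two $\SSS$-words have the same value in~$\MM$ exactly when they are joined by finitely many elementary moves for the presentation~\eqref{E:Neutral2}, namely $\uu \sep \vv \sep \vv' \leftrightarrow \uu \sep \NM(\vv) \sep \vv'$ and $\uu \sep \ee \sep \vv \leftrightarrow \uu \sep \vv$, so it suffices to show that $\coll_\ee \comp \NM$ is preserved by each. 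For a move of the first kind this holds already before applying~$\coll_\ee$, directly by~\eqref{E:NormSys3}.

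The move of the second kind, inserting or deleting~$\ee$ in the interior of a word, is the one nontrivial computation, and it is where neutrality enters. I would first establish
$$\NM(\uu \sep \ee \sep \vv) = \NM(\uu \sep \vv) \sep \ee$$
by chaining the axioms: rewrite the left side as $\NM(\uu \sep \NM(\ee \sep \vv))$ using~\eqref{E:NormSys3}, replace $\NM(\ee \sep \vv)$ by $\NM(\vv) \sep \ee$ using~\eqref{E:Neutral1}, pull the trailing~$\ee$ outside using~\eqref{E:Neutral1} again, and collapse $\NM(\uu \sep \NM(\vv))$ to $\NM(\uu \sep \vv)$ by a last use of~\eqref{E:NormSys3}. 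Applying~$\coll_\ee$ erases the exposed trailing~$\ee$, giving the required invariance. Once $\NF$ is well defined it is a section of~$\ev$ (as $\NM(\ww)$, and then $\coll_\ee(\NM(\ww))$, represent $\ev(\ww)$, since $\ee$ evaluates to~$1$), it lands in $(\SSS \setminus \{\ee\})^*$, and it is geodesic: for any $\ww$ over $\SSS \setminus \{\ee\}$ with $\ev(\ww) = \gg$ one has $\LG{\NF(\gg)} = \LG{\coll_\ee(\NM(\ww))} \le \LG{\NM(\ww)} = \LG\ww$ by~\eqref{E:NormSys1}.

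For the converse direction I would record two immediate facts and reduce everything to them: that $\ev \comp \NM = \ev$ on $(\SSS \cup \{\ee\})^*$ (because $\NF(\ev(\ww))$ represents $\ev(\ww)$ and $\ee$ evaluates to~$1$), and that $\NM$ is length-preserving, which is exactly the choice of exponent~$\mm$; the inequality $\mm \ge 0$ comes from geodesy applied to the $\SSS$-word $\coll_\ee(\ww)$. Then~\eqref{E:NormSys1} holds by construction, and~\eqref{E:NormSys3} is routine bookkeeping: the two sides have equal $\NF$-part because $\ev(\uu \sep \NM(\ww) \sep \vv) = \ev(\uu \sep \ww \sep \vv)$, and equally many trailing~$\ee$'s because $\NM$ preserves length, so they are literally the same word; neutrality~\eqref{E:Neutral1} is the same one-line count. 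For~\eqref{E:NormSys2} one checks the single letters: $\NM(\ee) = \ee$ since $\NF(1)$ is empty, and $\NM(\sss) = \sss$ for $\sss \in \SSS$ since geodesy forces $\LG{\NF(\ev(\sss))} \le 1$. Here I assume, harmlessly, that no generator evaluates to~$1$, so that $\NF(\ev(\sss))$ is the single letter~$\sss$ and $\mm = 0$.

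It remains to obtain the presentation~\eqref{E:Neutral2}, i.e. that $\ev$ induces a bijection $(\SSS \cup \{\ee\})^*/{\approx} \to \MM$, where $\approx$ is the congruence generated by the listed relations. Surjectivity and the fact that $\ev$ respects the relations are clear. For injectivity the clean point is that \emph{every} word satisfies $\ww \approx \NF(\ev(\ww))$: indeed $\ww \approx \NM(\ww) = \NF(\ev(\ww)) \sep \ee^{\mm}$ by the defining relation, and the $\ee = 1$ relations then erase the block~$\ee^\mm$. Hence two words with the same $\ev$-image are both $\approx$-equivalent to the common word $\NF(\ev(\ww))$, so to each other, which is injectivity. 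I expect the interior-$\ee$ identity of the forward direction to be the main obstacle: it is the only step combining all three normalisation axioms with neutrality, whereas every remaining step reduces to evaluating~$\ev$ and counting lengths.
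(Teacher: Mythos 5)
Your proof is correct. Note that the survey itself gives no proof of this proposition (it defers to \cite[Prop.\,2.2.7]{Dip}), so there is nothing internal to compare against; your direct verification is the natural argument, and you correctly isolated the one load-bearing computation, namely the interior-$\ee$ identity $\NM(\uu \sep \ee \sep \vv) = \NM(\uu \sep \vv) \sep \ee$, whose derivation by alternating \eqref{E:NormSys3} and \eqref{E:Neutral1} is exactly right and is what makes $\coll_\ee \comp \NM$ invariant under the defining relations of \eqref{E:Neutral2}. Your reading that ``normalisation for~$\MM$ mod~$\ee$'' presupposes $\ee$ to be $\NM$-neutral also matches Def.\,\ref{D:Neutral}, where neutrality is introduced before the ``mod~$\ee$'' notion.

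One point deserves comment: your added hypothesis for \eqref{E:NormSys2} in the converse direction is not merely harmless, it is genuinely needed, since the statement as literally given fails when $1 \in \SSS$ (geodesy forces $\NF(1)$ to be the empty word, whence $\NM(1) = \ee \neq 1$ as length-one words). Be slightly careful with your parenthetical justification, though: from ``no generator evaluates to~$1$'' alone you get only that $\NF(\ev(\sss))$ is \emph{some} single letter $\ttt$ with $\ev(\ttt) = \ev(\sss)$; concluding $\ttt = \sss$ additionally uses that $\ev$ is injective on~$\SSS$. Under the survey's convention that $\SSS$ is a \emph{subfamily of}~$\MM$ (so letters are elements and $\ev$ fixes them) this is automatic and your patch is exactly right, but for an abstract alphabet with non-injective evaluation the condition $1 \notin \ev(\SSS)$ would not suffice and \eqref{E:NormSys2} could still fail. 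Everything else --- well-definedness via the two elementary move types, geodesy via $\LG{\coll_\ee(\NM(\ww))} \le \LG{\NM(\ww)} = \LG\ww$, the length bookkeeping for \eqref{E:NormSys1}, \eqref{E:NormSys3} and \eqref{E:Neutral1}, and the injectivity argument for the presentation via $\ww \approx \NF(\ev(\ww))$ --- is complete and correct.
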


Again, the two correspondences of Prop.\,\ref{P:GenNF} are inverses of one another. Thus, investigating geodesic normal forms and investigating normalisations are one and the same question.

This general framework being set, we now turn to a more specific situation. By Prop.\,\ref{P:RegularS} and \ref{P:RecipeS}, subfactors of length~two play a prominent r\^ole in Garside normalisation. This is the property we shall extend. We recall Notation~\ref{N:Positions}, in particular that, for $\NM: \SSS^* \to \SSS^*$, we use $\NMbar$ for the restriction of~$\NM$ to~$\Pow\SSS2$. 

\begin{defi}\label{D:Quad}
\rm A normalisation $(\SSS,\NM)$ is \emph{quadratic} if the two conditions hold:

\ITEM1 An $\SSS$-word~$\ww$ is $\NM$-normal if, and only if, every length-two factor of~$\ww$ is.

\ITEM2 For every $\SSS$-word~$\ww$, there exists a finite sequence of positions~$\uu$, depending on~$\ww$, such that $\NM(\ww)$ is equal to~$\NMbar_\uu(\ww)$.
\end{defi}

\begin{exam}\label{X:Normal}
\rm By Prop.\,\ref{P:RegularS} and \ref{P:RecipeS}, if $\SSS$ is a Garside family, then the associated normalisation $(\SSS, \NMS)$ is quadratic: Prop.\,\ref{P:RecipeS} says that $\uu:= \delta_\pp$ can be chosen for \emph{every} $\SSS$-word~$\ww$ of length~$\pp$.

For a different example, as in Ex.\,\ref{X:Abelian}, let $(\At_\nn, \NM^\Lex)$ be the lexicographic normalisation for the free abelian monoid~$\NNNN^\nn$. Then $(\At_\nn, \NM^\Lex)$ is quadratic. Indeed, an $\At_\nn$-word is $\NM^\Lex$-normal if, and only if, all its length-two subfactors are~$\tta_\ii \sep \tta_\jj$ with $\ii \le \jj$. On the other hand, every $\At_\nn$-word can be transformed into a $\NM^\Lex$-normal word by switching adjacent letters that are not in the due order.
\end{exam}

Simple counterexamples show that none of the two conditions in Def.\,\ref{D:Quad} implies the other. When a normalisation~$(\SSS, \NM)$ is quadratic, then, by definition, the restriction~$\NMbar$ of~$\NM$ to~$\Pow\SSS2$ is crucial and most properties can be read from~$\NMbar$. In particular, one shows that, if $\MM$ is a monoid and $(\SSS,\NM)$ is a quadratic normalisation for~$\MM$ (\resp for~$\MM$ mod~$\ee$), then $\MM$ admits the presentation 
\begin{gather}\label{E:QuadPres}
\MON{\SSS}{\{\sss\sep\ttt = \NMbar\HS{0.3}(\sss\sep\ttt) \mid \sss, \ttt \in \SSS\}},\\
(\resp \MON{\SSS \setminus \{\ee\}}{\{\sss\sep\ttt = \coll_{\ee} (\NMbar\HS{0.3}(\sss\sep\ttt)) \mid \sss, \ttt \in \SSS \setminus \{\ee\}\}}\ ). 
\end{gather}
So, the relations between words of length two bear all information.

Before turning to more elaborate results, let us immediately note the following direct consequence of Def.\,\ref{D:Quad}\ITEM1:

\begin{prop}\label{P:RegularQ}
If $(\SSS, \NM)$ is a quadratic normalisation and $\SSS$ is finite, then $\NM$-normal words form a regular language.
\end{prop}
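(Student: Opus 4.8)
The plan is to mimic the proofs of Prop.~\ref{P:Regular} and Prop.~\ref{P:RegularS}, both of which express normality as the avoidance of a finite set of length-two patterns. The single ingredient needed is condition~\ITEM1 of Def.~\ref{D:Quad}, which characterises $\NM$-normality locally: an $\SSS$-word is $\NM$-normal if, and only if, each of its length-two factors is $\NM$-normal. This immediately reformulates the whole problem in terms of forbidden factors.

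Concretely, I would first set $\FFF := \{\sss \sep \ttt \in \Pow\SSS2 \mid \NMbar(\sss\sep\ttt) \neq \sss\sep\ttt\}$, the set of length-two $\SSS$-words that are \emph{not} $\NM$-normal. Since $\SSS$ is finite, the set $\Pow\SSS2$ is finite, hence so is $\FFF$. By Def.~\ref{D:Quad}\ITEM1, an $\SSS$-word~$\ww$ is $\NM$-normal precisely when no element of~$\FFF$ occurs as a factor of~$\ww$; thus the language of $\NM$-normal words is exactly the set of $\SSS$-words that avoid the finite family~$\FFF$ of length-two factors.

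It then remains to recall the standard fact that, over a finite alphabet, the set of words avoiding a fixed finite set of factors is regular. This can be made explicit: build a finite automaton whose non-failure states record the last letter read (together with a distinguished initial state and a single absorbing failure state), send the transition reading~$\ttt$ from state~$\sss$ to the failure state exactly when $\sss\sep\ttt$ lies in~$\FFF$ and to state~$\ttt$ otherwise, and declare every state except the failure state accepting. This automaton recognises exactly the words avoiding~$\FFF$, so the language of $\NM$-normal words is regular.

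I do not expect any genuine obstacle here: condition~\ITEM1 of Def.~\ref{D:Quad} does all the work, reducing the statement to the elementary observation that languages defined by finitely many forbidden factors (the so-called \emph{local} languages) are regular. The only mild point worth flagging is that, in contrast with Prop.~\ref{P:Regular} and Prop.~\ref{P:RegularS}, where the local characterisation had to be derived from Lemma~\ref{L:Head} or from the definition of $\SSS$-normality, here it is built directly into the hypothesis of being a quadratic normalisation.
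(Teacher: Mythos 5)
Your proposal is correct and is exactly the paper's argument: the paper states Prop.~\ref{P:RegularQ} as an immediate consequence of Def.~\ref{D:Quad}\ITEM1, mirroring the proof of Prop.~\ref{P:Regular}, namely that over the finite alphabet~$\SSS$ the $\NM$-normal words are those avoiding the finitely many non-normal length-two factors, hence form a regular (indeed local) language. Your explicit automaton construction just spells out the standard fact the paper leaves implicit.
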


\pagebreak

\subsection{The class of a quadratic normalisation}\label{SS:Class}

We now introduce a parameter, called the class, evaluating the complexity of the normalisation process associated with a quadratic normalisation.

If $(\SSS, \NM)$ is a quadratic normalisation and $\ww$ is an $\SSS$-word, $\NM(\ww)$ is obtained by successively applying the restriction~$\NMbar$ of~$\NM$ to~$\Pow\SSS2$ at various positions. So, in particular, for~$\LG\ww= 3$, we have $\NM(\ww) = \NMbar_\uu(\ww)$ for some finite sequence~$\uu$ of positions~$1$ and~$2$. As $\NMbar$ is idempotent, repeating~$1$ or~$2$ in~$\uu$ is useless, and it is enough to consider sequences~$\uu$ of the form~$121...$ or $212...$ (we omit separators). 

\begin{nota}
\rm For $\mm \ge 0$, we write $\ai{\mm}$ for the alternating sequence~$121...$ of length~$\mm$, and similarly for~$\aii\mm$. 
\end{nota}

So, if $(\SSS, \NM)$ is a quadratic normalisation, then, for every $\SSS$-word~$\ww$ of length three, there exists~$\mm$ such that $\NM(\ww)$ is $\NMbar_{\ai\mm}(\ww)$ or $\NMbar_{\aii\mm}(\ww)$.

\begin{defi}
\rm We say that a quadratic normalisation~$(\SSS, \NM)$ is \emph{of left class}~$\cc$ (\resp \emph{right-class}~$\cc$) if, for every~$\ww$ in~$\Pow\SSS3$, we have
$$\NM(\ww) = \NMbar_{\ai\cc}(\ww)\quad (\resp \NM(\ww) = \NMbar_{\aii\cc}(\ww)).$$
We say that $(\SSS, \NM)$ is \emph{of class~$(\cc, \cc')$} if it is of left class~$\cc$ and right class~$\cc'$.
\end{defi}

\begin{exam}\label{X:Abelian3} 
\rm If $(\MM, \Delta)$ is a Garside monoid, Prop.\,\ref{P:RecipeD} gives $\NMD(\ww) {=} \NMDbarr{212}\HS{1.5}(\ww)\VR(0,2)$ for every $\Div(\Delta)$-word~$\ww$ of length three. Hence, $(\Div(\Delta), \NMD)$ is of right class~$3$. Symmetrically, Prop.\,\ref{P:RecipeRD} gives $\NMD(\ww) = \NMDbarr{121}\HS{1.5}(\ww)\VR(3.5,2)$, so $(\Div(\Delta), \NMD)$ is also of left class~$3$. Hence, the normalisation $(\Div(\Delta), \NMD)$ is of class~$(3, 3)$. 

If $\SSS$ is a Garside family in a left-cancellative monoid with no nontrivial invertible element, then Prop.\,\ref{P:RecipeS} implies that $(\SSS, \NMS)$ is of right class~$3$, but, lacking in general a counterpart of Prop.\,\ref{P:RecipeRD}, we have no hint for the left class.

The reader can check that the lexicographic normalisation $(\At_\nn, \NM^\Lex)$ of Ex.\,\ref{X:Abelian} also has class~$(3, 3)$. On the other hand, there exist examples of normalisations with an arbitrarily high minimal class: see~\cite[Ex.\,3.3.9]{Dip}, where the minimal class is $(3 + \lfloor\log_2\nn\rfloor, 3 + \lfloor\log_2\nn\rfloor)$ for a size~$\nn$ alphabet. 

Let us mention one more normalisation, very different from the previous examples. If~$\XX$ is a linearly ordered finite set, the \emph{plactic} monoid over~$\XX$ is~\cite{CainGrayMalheiro}
\begin{equation*}\label{E:PlacticPresent}
P_X = \bigg\langle \XX \ \bigg\vert\ \  \begin{matrix}
\ttx\ttz\tty = \ttz\ttx\tty\  
&\text{for $\ttx\le\tty<\ttz$}\\
\tty\ttx\ttz = \tty\ttz\ttx\ 
&\text{for $\ttx<\tty\le\ttz$}
\end{matrix}
\ \smash{\bigg\rangle^{\!\!+}}.
\end{equation*}
Then $\PP_{\XX}$ is also generated by the family~$\Col$ of nonempty columns over~$\XX$, defined to be strictly decreasing products of elements of~$\XX$. Call a pair of columns $\sss_1\sep \sss_2$ \emph{normal} if $\LG{\sss_1}\ge\LG{\sss_2}$ holds and, for every $1\le\kk\le\LG{\sss_2}$, the $\kk$th element of~$\sss_1$ is at most the one of~$\sss_2$. Then normal sequences $\sss_1 \sep \pdots \sep \sss_\pp$ are in one-to-one correspondence with Young tableaux, and every element of~$P_X$ is represented by a unique tableau of minimal length (in terms of columns). Thus, mapping a $\Col$-word to the unique corresponding tableau defines a geodesic normal form on~$(\PP_{\XX},\Col)$. Writing $\Colp$ for $\Col$ enriched with one empty column~$\emptyset$ and using Prop.\,\ref{P:GenNF}, we obtain a normalisation $(\Colp, \NM)$ for~$P_X$ mod~$\emptyset$. Then, condition~\ITEM1 in Def.\,\ref{D:Quad} is satisfied by the definition of tableaux. Moreover, for every $\Colp$-word~$\ww$, the normal tableau~$\NM(\ww)$ can be computed by resorting to the Robinson--Schensted's insertion algorithm, progressively replacing each pair $\sss_1\sep\sss_2$ of subsequent columns by $\NMbar\HS{0.3}(\sss_1\sep\sss_2)$, which is a tableau with two columns (if the algorithm returns a tableau with one column, we insert an empty column to keep the length unchanged). So, the normalisation~$(\Colp,\NM)$ also satisfies condition~\ITEM2 in Def.\,\ref{D:Quad} and, therefore, it is quadratic. Then, the computations of~\cite[\S\S3.2--3.4 and \S\S4.2--4.4]{BokutChenChenLi} show that $(\Colp,\NM)$ is of class~$(3,3)$.
\end{exam}

There exists an easy connection between the left and the right class.

\begin{lemm}\label{L:Class}
If a quadratic normalisation is of left class~$\cc$, then it is of left class~$\cc'$ for every~$\cc'$ with $\cc' \ge \cc$, and of right class~$\cc''$ for every~$\cc''$ with $\cc'' \ge \cc+1$.
\end{lemm}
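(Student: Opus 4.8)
The plan is to isolate one elementary observation and then read off both conclusions from it. The observation is that any $\NM$-normal word is a fixed point of~$\NMbar_\ii$ at every admissible position~$\ii$: by Def.\,\ref{D:Quad}\ITEM1, if $\ww'$ is $\NM$-normal then each of its length-two factors is $\NM$-normal, hence is fixed by~$\NMbar$, so $\NMbar_\ii(\ww') = \ww'$. Moreover $\NM(\ww)$ is always $\NM$-normal, since \eqref{E:NormSys3} makes~$\NM$ idempotent. Thus, once a sequence of positions has brought a length-three word~$\ww$ to~$\NM(\ww)$, any further applications of~$\NMbar$ leave it unchanged. I would record this remark first, as it drives everything.

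For the first conclusion I would fix $\cc' \ge \cc$ and write $\ai{\cc'}$ as $\ai\cc$ followed by a (possibly empty) tail~$\uu$ of alternating positions, so that, by the composition convention of Notation~\ref{N:Positions}, $\NMbar_{\ai{\cc'}} = \NMbar_\uu \comp \NMbar_{\ai\cc}$. Since $\NMbar_{\ai\cc}(\ww) = \NM(\ww)$ by the left-class~$\cc$ hypothesis, the fixed-point remark gives $\NMbar_\uu(\NM(\ww)) = \NM(\ww)$, whence $\NMbar_{\ai{\cc'}}(\ww) = \NM(\ww)$ for every~$\ww$ in~$\Pow\SSS3$; that is, the normalisation is of left class~$\cc'$. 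The identical argument with the two positions exchanged shows that right class~$\dd$ implies right class~$\dd'$ for all $\dd' \ge \dd$, a symmetric monotonicity I would invoke at the very end.

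For the second conclusion it then suffices to establish right class~$\cc+1$. The key identity is that the alternating sequence $\aii{\cc+1}$ is the single position~$2$ prepended to~$\ai\cc$, so that $\NMbar_{\aii{\cc+1}} = \NMbar_{\ai\cc} \comp \NMbar_2$. For $\ww$ in~$\Pow\SSS3$ the word $\NMbar_2(\ww)$ still has length three, and since $\NMbar$ agrees with~$\NM$ on length-two words, applying \eqref{E:NormSys3} with the first letter as prefix~$\uu$ and empty suffix~$\vv$ yields $\NM(\NMbar_2(\ww)) = \NM(\ww)$. Feeding the length-three word $\NMbar_2(\ww)$ into the left-class~$\cc$ hypothesis then gives $\NMbar_{\ai\cc}(\NMbar_2(\ww)) = \NM(\NMbar_2(\ww)) = \NM(\ww)$, i.e. $\NMbar_{\aii{\cc+1}}(\ww) = \NM(\ww)$, which is precisely right class~$\cc+1$; combining this with the monotonicity above completes the proof. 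The only real care needed is the position bookkeeping---the order of composition in Notation~\ref{N:Positions} and the recognition of $\aii{\cc+1}$ as~$2$ followed by~$\ai\cc$---together with the opening fixed-point remark; once these are in place the computation is routine.
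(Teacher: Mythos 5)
Your proof is correct and follows essentially the same route as the paper's: the paper likewise derives left class~$\cc'$ for $\cc' \ge \cc$ from the fact that $\NM(\ww)$ is $\NM$-invariant (your fixed-point remark, stated there more tersely), and obtains right class~$\cc+1$ from the identity $\NMbar_{\aii{\cc+1}} = \NMbar_{\ai\cc} \comp \NMbar_2$ together with~\eqref{E:NormSys3}, exactly as you do. Your version merely spells out the position bookkeeping and the monotonicity of the right class more explicitly.
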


\begin{proof}
Assume that $(\SSS, \NM)$ is of left class~$\cc$. First, for~$\ww$ in $\Pow{\SSS}{3}$, the equality $\NM(\ww) = \NMbar_{\ai\cc}(\ww)$ implies $\NM(\ww) = \NMbar_{\ai{\cc+1}}(\ww)$, since $\NM(\ww)$ is $\NM$-invariant. So $(\SSS, \NM)$ is of left class~$\cc+1$ as well and, from there, it is of left class~$\cc'$ for~$\cc' \ge \cc$. 

On the other hand, we have $$\NM(\ww) = \NMbar_{\ai\cc}(\NMbar_2(\ww)) = \NMbar_{\aii{\cc+1}}(\ww)$$ by the assumption and by~\eqref{E:NormSys3}. Hence $(\SSS, \NM)$ is of right class~$\cc+1$ and, from there, of right class~$\cc''$ for every~$\cc''$ with $\cc'' \ge \cc+1$. 
\end{proof}

Hence, the minimal class of a quadratic normalisation~$(\SSS, \NM)$ is either~$(\cc, \cc')$ with $\vert\cc' - \cc\vert \le 1$, or~$(\infty, \infty)$, the latter being excluded for $\SSS$ finite. By Lemma~\ref{L:Class}, a Garside normalisation is of right class~$4$, and we can state:

\begin{prop}\label{P:GarClass}
If $\MM$ is a left-cancellative monoid with no nontrivial invertible element and $\SSS$ is a Garside family in~$\MM$, then the normalisation $(\SSS, \NMS)$ is of class~$(4, 3)$.
\end{prop}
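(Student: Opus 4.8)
The plan is to establish the two halves of the class separately. The right class~$3$ is read off directly from the normalisation recipe of Proposition \ref{P:RecipeS}, whereas the left class~$4$ is obtained from it by a single formal idempotence step; it is precisely this extra step that raises the bound from~$3$ to~$4$.

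First I would note that $(\SSS, \NMS)$ is a quadratic normalisation, as recorded in Example \ref{X:Normal}, so that the notions of left and right class apply to it. For the right class I would specialise Proposition \ref{P:RecipeS} to $\SSS$-words of length three. The sequence $\delta_\pp$ appearing there satisfies $\delta_3 = \sh(\delta_2) \sep 1 \sep 2 = 2 \sep 1 \sep 2$, which is the alternating sequence~$\aii3$ of length three. Hence $\NMS(\ww) = \NMSbarr{\delta_3}(\ww) = \NMSbarr{212}(\ww)$ for every $\ww$ in $\Pow\SSS3$, that is, $(\SSS, \NMS)$ is of right class~$3$, exactly as already observed in Example \ref{X:Abelian3}.

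For the left class, the key observation is that Lemma \ref{L:Class} has a left--right mirror version, obtained by exchanging the roles of positions~$1$ and~$2$ throughout its (purely formal) proof, which uses only~\eqref{E:NormSys3} and the class hypothesis: \emph{if a quadratic normalisation is of right class~$\cc$, then it is of left class~$\cc+1$}. Applied with $\cc = 3$ this gives left class~$4$. Concretely, for $\ww$ in $\Pow\SSS3$, normalising the length-two factor in position~$1$ first does not change the final normal form, by~\eqref{E:NormSys3}, so that
\[
\NMS(\ww) = \NMS(\NMSbar_1(\ww)) = \NMSbarr{212}(\NMSbar_1(\ww)) = \NMSbarr{1212}(\ww),
\]
where the middle equality is right class~$3$ applied to $\NMSbar_1(\ww)$, and the last one records that prefixing the position~$1$ to the sequence $212$ yields the alternating sequence $1212 = \ai4$. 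Hence $(\SSS, \NMS)$ is of left class~$4$, and together with right class~$3$ this gives class~$(4,3)$.

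The computation is routine, and the single point worth stressing is \emph{why} one does not obtain left class~$3$. For a Garside monoid the symmetric recipe of Proposition \ref{P:RecipeRD} gave left class~$3$ directly; for a general Garside family no such right-multiplication recipe is available (Example \ref{X:Abelian3} emphasises this), so the left bound can only be reached indirectly through the mirror of Lemma \ref{L:Class}, at the cost of one extra position. There is thus no genuine technical obstacle here, only the conceptual asymmetry between left and right normalisation.
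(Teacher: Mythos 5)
Your proof is correct and is essentially the paper's own argument: the paper likewise reads off right class~$3$ from Prop.~\ref{P:RecipeS} via $\delta_3 = 2\sep 1\sep 2 = \aii3$ (as recorded in Ex.~\ref{X:Abelian3}) and then passes to left class~$4$ by Lemma~\ref{L:Class} in its left--right mirrored form (the paper's sentence before the proposition says ``right class~$4$'', an apparent slip for ``left class~$4$''). Your explicit verification of the mirror step $\NMS(\ww) = \NMSbarr{1212}\HS{5}(\ww)$, using only~\eqref{E:NormSys3} and the symmetry of the definitions under word reversal, just spells out what the paper leaves implicit.
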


\subsection{Quadratic normalisations of class $(4, 3)$}\label{SS:Class3}

We shall now show that many properties of Garside normalisations extend to all quadratic normalisations of class $(4,3)$. The extension comes from the following observation:

\begin{lemm}\label{L:DominoQ}
A quadratic normalisation~$(\SSS, \NM)$ is of class~$(4, 3)$ if, and only if, the left domino rule is valid for the family of all $\NM$-normal words of length~two.
\end{lemm}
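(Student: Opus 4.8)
The plan is to characterise the class-$(4,3)$ condition purely in terms of the behaviour of $\NMbar$ on length-three words, and then to translate each direction of the equivalence into a statement about the left domino rule. The key observation is that, by definition, $(\SSS,\NM)$ is of left class~$4$ exactly when $\NM(\ww) = \NMbar_{\ai4}(\ww) = \NMbar_{1212}(\ww)$ holds for every $\ww$ in~$\Pow\SSS3$, while being of right class~$3$ means $\NM(\ww) = \NMbar_{\aii3}(\ww) = \NMbar_{212}(\ww)$. Since $\NM(\ww)$ is $\NM$-invariant, one always has $\NMbar_{212}(\ww) = \NMbar_{1212}(\ww)$ once right class~$3$ holds (apply $\NMbar_1$ once more, which changes nothing), so the content of class~$(4,3)$ is really the single equation
\begin{equation*}
\NMbar_{212}(\ww) = \NMbar_{2123}(\text{stabilised}),\quad\text{i.e.}\quad \NMbar_1(\NMbar_2(\NMbar_1(\NMbar_2(\ww)))) = \NMbar_2(\NMbar_1(\NMbar_2(\ww)))
\end{equation*}
for all $\ww$ in~$\Pow\SSS3$; in words, after performing the right-class-$3$ sweep $\NMbar_{212}$, one further application of $\NMbar_1$ leaves the word unchanged. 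So first I would record this reformulation: class~$(4,3)$ is equivalent to saying that whenever $\sss_2\sep\sss_3$ and $\sss_1'\sep\sss_2'$ are the normal factors produced by $\NMbar_{212}$, the leading pair $\sss_1'\sep\sss_2'$ is already $\NM$-normal.

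Next I would unpack the left domino rule in this length-three setting. Starting from $\ww = \ttt_0\sep\sss_1\sep\sss_2$ with $\sss_1\sep\sss_2$ normal, the sweep $\NMbar_{212}$ first normalises positions $2,3$ — but these are already normal — then positions $1,2$, producing $\sss_1'\sep\ttt_1$ with $\ttt_1\sep\sss_2$ to be normalised again, and so on. The data $\sss_1,\sss_2,\sss_1',\sss_2',\ttt_0,\ttt_1,\ttt_2$ appearing in Definition~\ref{D:LeftDomino} are exactly the entries generated along this normalisation of a length-three word, laid out in the commutative diagram accompanying that definition: the hypotheses $\sss_1\sep\sss_2$, $\sss_1'\sep\ttt_1$, $\sss_2'\sep\ttt_2 \in \FFF$ are precisely the pairs that the sweep $\NMbar_{212}$ normalises, and the conclusion $\sss_1'\sep\sss_2'\in\FFF$ is precisely the assertion that one more $\NMbar_1$ does nothing. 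Thus the identification of the two conditions is, at bottom, a matter of checking that the seven-element configuration of the domino rule coincides with the configuration of intermediate words arising in the reformulated class equation above.

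For the forward direction I would assume class~$(4,3)$ and, given a configuration satisfying the hypotheses of Definition~\ref{D:LeftDomino}, realise it as the normalisation history of the length-three word $\ttt_0\sep\sss_1\sep\sss_2$; the class equation then yields that $\sss_1'\sep\sss_2'$ is normal. For the converse I would assume the left domino rule and show, for arbitrary $\ww = \ttt_0\sep\sss_1\sep\sss_2\in\Pow\SSS3$, that the pair produced in leading position after $\NMbar_{212}$ is normal — reading off the required intermediate relations $\sss_1'\ttt_1 = \ttt_0\sss_1$ and $\sss_2'\ttt_2 = \ttt_1\sss_2$ directly from the definition of $\NMbar$ at each step, so that the domino rule applies and gives normality of $\sss_1'\sep\sss_2'$, i.e. right class~$3$; left class~$4$ then follows by Lemma~\ref{L:Class}. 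The main obstacle I anticipate is bookkeeping: one must be careful that the order in which $\NMbar$ is applied in the sweep $\NMbar_{212}$ matches exactly the diagram of the domino rule, and that an arbitrary length-three word (whose trailing pair need not start out normal) can be reduced to the case covered by the rule — this is handled by first normalising positions $2,3$, which is the initial $\NMbar_2$ of the sweep and the reason the exponent is~$3$ rather than~$2$ on the right. Verifying that no hidden length change or loss of normality occurs in this reduction is the one genuinely delicate point; everything else is a direct matching of diagrams.
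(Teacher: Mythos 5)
Your proposal is correct and takes essentially the same route as the paper: both directions unfold the sweep $\NMbar_{212}$ on a length-three word $\ttt_0 \sep \sss_1 \sep \sss_2$ and match the intermediate pairs against the domino diagram, with the converse first normalising the trailing pair (your initial $\NMbar_2$, exactly the paper's choice $\sss_1 \sep \sss_2 = \NMbar\HS{0.3}(\rr_1 \sep \rr_2)$) and then passing from right class~$3$ to class~$(4,3)$ via the symmetric form of Lemma~\ref{L:Class}. One cosmetic slip, harmless to the argument: applying $\NMbar_1$ \emph{after} the sweep yields $\NMbar_{2121}$, i.e.\ right class~$4$, whereas left class~$4$, i.e.\ $\NMbar_{1212}$, is obtained by \emph{prepending} $\NMbar_1$ as in the proof of Lemma~\ref{L:Class} --- but your reformulation of class~$(4,3)$ as ``one further $\NMbar_1$ fixes $\NMbar_{212}(\ww)$'' is nonetheless equivalent to right class~$3$, hence correct.
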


\begin{proof}
Assume that~$(\SSS, \NM)$ is of right class~$3$, and let $\FFF$ be the family of all $\NM$-normal words of length~two. Let $\sss_1, \sss_2, \sss'_1, \sss'_2, \ttt_0, \ttt_1, \ttt_2$ be elements of~$\SSS$ satisfying the assumptions of Def.\,\ref{D:LeftDomino}. By the definition of the right class, we have $\NM(\ttt_0 \sep \sss_1 \sep \sss_2) = \NMbar_{212}(\ttt_0 \sep \sss_1 \sep \sss_2)$. As, by assumption, $\sss_1\sep\sss_2$ is $\NM$-normal, we obtain $$\NM(\ttt_0 \sep \sss_1 \sep \sss_2) = \NMbar_{12}(\ttt_0 \sep \sss_1 \sep \sss_2) = \NMbar_{2}(\sss'_1\sep\ttt_1\sep\sss_2) = \sss'_1\sep\sss'_2\sep\ttt_2.$$ So $\sss'_1\sep\sss'_2\sep\ttt_2$ is $\NM$-normal, hence so is $\sss'_1\sep\sss'_2$. Therefore, the left domino rule is valid for~$\FFF$. 

Conversely, assume that the left domino rule is valid for~$\FFF$. Let $\ttt_0 \sep \rr_1 \sep \rr_2$ belong to~$\Pow\SSS3$. Put 
$$\sss_1 \sep \sss_2 = \NMbar\HS{0.3}(\rr_1 \sep \rr_2), \quad \sss'_1 \sep \ttt_1 = \NMbar\HS{0.3}(\ttt_0 \sep \sss_1), \quad \text{and} \quad \sss'_2 \sep \ttt_2 = \NMbar\HS{0.3}(\ttt_1 \sep \sss_2).$$ Then $\sss'_2 \sep \ttt_2$ is $\NM$-normal by construction, and $\sss'_1 \sep \sss'_2$ is $\NM$-normal by the left domino rule, so $\sss'_1 \sep \sss'_2 \sep \ttt_2$ is $\NM$-normal. Hence we have $\NM(\ww) = \NMbar_{212}(\ww)$ for every~$\ww$ in~$\Pow\SSS3$. Therefore, $(\SSS, \NM)$ is of right class~$3$ and, therefore, of class~$(4, 3)$. 
\hfill\end{proof}

\pagebreak

Using the left domino rule exactly as in Sec.\,\ref{S:NormalD} and~\ref{S:NormalS}, we deduce

\begin{prop}\label{P:RecipeQ}
If $(\SSS, \NM)$ is a quadratic normalisation of class~$(4, 3)$, then, for every word~$\ww$ of length~$\pp$, we have 
\begin{equation}\label{E:RecipeQ}
\NM(\ww) = \NMbar_{\delta_\pp}(\ww).
\end{equation} 
\end{prop}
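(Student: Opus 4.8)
The plan is to follow the same two-stage strategy used for Proposition~\ref{P:RecipeD} and Proposition~\ref{P:RecipeS}, the single new input being that the left domino rule for $\NM$-normal length-two words is now guaranteed by Lemma~\ref{L:DominoQ} (because $(\SSS,\NM)$ is of class~$(4,3)$) rather than by a cancellativity argument. The key step I would isolate first is a left-multiplication formula, the analogue of~\eqref{E:NormalD1}: for every $\NM$-normal word $\ww = \sss_1 \sep \pdots \sep \sss_\pp$ and every letter~$\ttt$ in~$\SSS$,
\[
\NM(\ttt \sep \ww) = \NMbar_{1 \sep 2 \sep \pdots \sep \pp}(\ttt \sep \ww).
\]
Granting this, the general formula~\eqref{E:RecipeQ} follows by an induction on~$\pp$.

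To prove the left-multiplication formula I would set $\ttt_0 := \ttt$ and define inductively, for $\ii = 1 \wdots \pp$, the pair $\sss'_\ii \sep \ttt_\ii := \NMbar(\ttt_{\ii-1} \sep \sss_\ii)$. Unwinding the definition shows that applying~$\NMbar$ successively at positions $1, 2, \ldots, \pp$ transforms $\ttt_0 \sep \sss_1 \sep \pdots \sep \sss_\pp$ into $\sss'_1 \sep \pdots \sep \sss'_\pp \sep \ttt_\pp$, which is exactly the upper row of the diagram in Figure~\ref{F:LeftMultD}. By construction each pair $\sss'_\ii \sep \ttt_\ii$ is $\NM$-normal, each $\sss_\ii \sep \sss_{\ii+1}$ is $\NM$-normal because $\ww$ is, and the relations $\sss'_\ii \ttt_\ii = \ttt_{\ii-1} \sss_\ii$ hold in the monoid~$\MM$ presented by $(\SSS,\NM)$, being instances of the defining relations $\ttt_{\ii-1} \sep \sss_\ii = \NMbar(\ttt_{\ii-1} \sep \sss_\ii)$. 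These are precisely the hypotheses of Definition~\ref{D:LeftDomino} for the successive cells of the diagram, so the left domino rule, valid by Lemma~\ref{L:DominoQ}, implies that each pair $\sss'_\ii \sep \sss'_{\ii+1}$ is $\NM$-normal. Together with the normality of the final pair $\sss'_\pp \sep \ttt_\pp$, this makes every length-two factor of $\sss'_1 \sep \pdots \sep \sss'_\pp \sep \ttt_\pp$ $\NM$-normal, whence that word is $\NM$-normal by Definition~\ref{D:Quad}\ITEM1. Since repeated application of~$\NMbar$ leaves the value of~$\NM$ unchanged by~\eqref{E:NormSys3}, this $\NM$-normal word must equal $\NM(\ttt \sep \ww)$, as claimed.

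I would then run the induction on~$\pp$. For $\pp = 2$ we have $\delta_2 = 1$ and $\NMbar_1 = \NMbar = \NM$ on $\Pow\SSS2$, so~\eqref{E:RecipeQ} is immediate. For the inductive step, write $\ww = \sss_1 \sep \ww'$ with $\LG{\ww'} = \pp - 1$. As $\sh$ raises every position by~$+1$, the first block of~$\delta_\pp$ acts only on~$\ww'$, giving $\NMbar_{\sh(\delta_{\pp-1})}(\ww) = \sss_1 \sep \NMbar_{\delta_{\pp-1}}(\ww')$, and the induction hypothesis identifies $\NMbar_{\delta_{\pp-1}}(\ww')$ with the $\NM$-normal word $\NM(\ww')$ of length $\pp-1$. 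Applying the remaining positions $1 \sep 2 \sep \pdots \sep \pp{-}1$ and invoking the left-multiplication formula for the normal word $\NM(\ww')$ then gives
\[
\NMbar_{\delta_\pp}(\ww) = \NM(\sss_1 \sep \NM(\ww')) = \NM(\sss_1 \sep \ww') = \NM(\ww),
\]
the last two equalities being instances of~\eqref{E:NormSys3}. The step I expect to be the main obstacle is the left-multiplication formula: one must carefully match each cell of Figure~\ref{F:LeftMultD} to the hypotheses of Definition~\ref{D:LeftDomino}, and, in contrast to Sections~\ref{S:NormalD} and~\ref{S:NormalS}, route the whole argument through Lemma~\ref{L:DominoQ} and the normalisation axioms, since left-cancellativity---used freely in those sections---is no longer available here.
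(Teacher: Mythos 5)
Your proposal is correct and takes essentially the same route as the paper, whose proof of Prop.~\ref{P:RecipeQ} consists precisely in invoking the left domino rule via Lemma~\ref{L:DominoQ} and repeating the argument of Prop.~\ref{P:RecipeD} and~\ref{P:RecipeS}: the left-multiplication formula read off the diagram of Fig.~\ref{F:LeftMultD}, followed by induction on~$\pp$. Your replacements of the cancellativity-based uniqueness arguments by the normalisation axioms---justifying the commuting cells through the presentation~\eqref{E:NormSys4}, and identifying the resulting word with $\NM(\ttt \sep \ww)$ via \eqref{E:NormSys3} and Def.~\ref{D:Quad}\ITEM1---are exactly the adjustments the paper intends in this non-cancellative setting.
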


So the universal recipe given by the sequence of positions~$\delta_\pp$ is valid for every normalisation of class~$(4,3)$. Of course, a similar recipe associated with the sequence of positions~$\widetilde\delta_\pp$ as in Prop.\,\ref{P:RecipeRD} is valid for every normalisation of class~$(3,4)$, with the right domino rule replacing the left one. In the case of a normalisation of class~$(3, 3)$, both recipes are valid, as in the case of the Garside normalisation associated with a Garside monoid or, more generally, with a bounded Garside family.

Arguing exactly as in the previous sections, we deduce

\begin{coro}\label{C:ComplexQ}
If $(\SSS, \NM)$ is a quadratic normalisation of class~$(4, 3)$ for a mon\-oid~Ê$\MM$, then $\NM$-normal decompositions can be computed in linear space and quadratic time. The Word Problem for~$\MM$ with respect to~$\SSS$ lies in~$\textsc{dtime}(\nn^2)$, and, if $\ee$ is a $\NM$-neutral element of~$\SSS$ and $\MM_\ee$ is the quotient of~$\MM$ obtained by collapsing~$\ee$, so does the Word Problem for~$\MM_\ee$ with respect to~$\SSS \setminus \{\ee\}$.
\end{coro}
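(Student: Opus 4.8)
The plan is to follow the proofs of Cor.\,\ref{C:ComplexD} and Cor.\,\ref{C:ComplexS} almost verbatim, the only change being that the universal recipe of Prop.\,\ref{P:RecipeQ} now plays the role that Prop.\,\ref{P:RecipeD} (\resp Prop.\,\ref{P:RecipeS}) played there. First I would bound the cost of computing~$\NM(\ww)$: assuming~$\SSS$ finite, the full table of the restriction~$\NMbar$ of~$\NM$ to~$\Pow\SSS2$ is precomputed once, after which each application of~$\NMbar$ costs~$O(1)$ and, by~\eqref{E:NormSys1}, leaves the length unchanged. Prop.\,\ref{P:RecipeQ} gives $\NM(\ww) = \NMbar_{\delta_\pp}(\ww)$ for every word~$\ww$ of length~$\pp$, and the sequence~$\delta_\pp$ has length $\pp(\pp-1)/2$; hence $\NM(\ww)$ is obtained in $O(\pp^2)$ constant-cost steps, never storing a word longer than~$\pp$, which gives the quadratic time and linear space bound for $\NM$-normal decompositions.

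Next, for the Word Problem of~$\MM$ relative to~$\SSS$, I would use that the defining relations~\eqref{E:NormSys4} all preserve length, so equivalent words have equal length, and that, by~\eqref{E:NormSys3} (whence~$\NM$ is constant on the fibres of~$\ev$, cf.\ Prop.\,\ref{P:NF}), two $\SSS$-words~$\ww,\ww'$ represent the same element of~$\MM$ if, and only if, $\NM(\ww) = \NM(\ww')$. Normalising both words and comparing them entry by entry then decides this in~$\textsc{dtime}(\nn^2)$.

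The quotient~$\MM_\ee$ is the only genuinely new point, and the main obstacle is that collapsing~$\ee$ destroys length-preservation, so one cannot simply compare normalised words as above. Instead I would observe that, since~$\ee$ is $\NM$-neutral and~$\MM_\ee$ collapses~$\ee$, Def.\,\ref{D:Neutral} makes $(\SSS,\NM)$ a normalisation for~$\MM_\ee$ mod~$\ee$, so Prop.\,\ref{P:GenNF} supplies the geodesic normal form $\gg \mapsto \coll_\ee(\NM(\ww))$ on~$(\MM_\ee,\SSS\setminus\{\ee\})$. Consequently two $(\SSS\setminus\{\ee\})$-words~$\ww,\ww'$ represent the same element of~$\MM_\ee$ if, and only if, $\coll_\ee(\NM(\ww)) = \coll_\ee(\NM(\ww'))$; computing each side costs $O(\nn^2)$ for the normalisation plus $O(\nn)$ for erasing the letters~$\ee$ and comparing, so this Word Problem too lies in~$\textsc{dtime}(\nn^2)$. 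Routing the comparison through the $\NM$-neutrality of~$\ee$ and Prop.\,\ref{P:GenNF}, rather than through raw normalised words, is exactly what guarantees correctness here.
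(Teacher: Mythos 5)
Your proposal is correct and is essentially the paper's own argument: the paper justifies this corollary only by the phrase ``arguing exactly as in the previous sections'', that is, by transposing the proofs of Cor.~\ref{C:ComplexD} and Cor.~\ref{C:ComplexS}, which is precisely what you do (precomputed table of~$\NMbar$ for finite~$\SSS$, the universal recipe~$\delta_\pp$ of Prop.~\ref{P:RecipeQ} giving $\pp(\pp-1)/2$ constant-cost, length-preserving steps, and the criterion that two $\SSS$-words are equivalent if, and only if, their $\NM$-images coincide, which follows from~\eqref{E:NormSys3} and the length-preserving presentation~\eqref{E:NormSys4}). Your handling of the quotient~$\MM_\ee$ via $\NM$-neutrality, the erasing map~$\coll_\ee$, and Prop.~\ref{P:GenNF} is exactly the mechanism the paper sets up in Def.~\ref{D:Neutral} for this purpose, so nothing in your route deviates from the intended one.
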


\begin{coro}\label{C:LeftFTPQ}
If $(\SSS, \NM)$ is a quadratic normalisation of class~$(4, 3)$ {\rm(}\resp of class~$(3,4)${\rm)}, then $\NM$-normal words satisfy the $2$-Fellow traveller Property on the left {\rm(}\resp on the right{\rm)}.
\end{coro}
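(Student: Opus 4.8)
The plan is to transport, essentially verbatim, the argument already used for Cor.~\ref{C:LeftFTPD} and~\ref{C:LeftFTPS}, the only new input being Lemma~\ref{L:DominoQ}, which tells us that class~$(4,3)$ is exactly what is needed to run it. So I would treat the left case (class~$(4,3)$) in detail and obtain the right case (class~$(3,4)$) by the mirror argument.

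First, since $(\SSS,\NM)$ is of class~$(4,3)$, Lemma~\ref{L:DominoQ} guarantees that the left domino rule of Def.~\ref{D:LeftDomino} is valid for the family of all $\NM$-normal words of length~two. This is precisely the hypothesis under which the ladder diagram of Fig.~\ref{F:LeftMultD} is legitimate. Thus, starting from an $\NM$-normal word $\ww = \sss_1 \sep \pdots \sep \sss_\pp$ and a letter $\ttt$ in~$\SSS$, I would set $\ttt_0 := \ttt$ and define inductively $\sss'_\ii \sep \ttt_\ii := \NMbar(\ttt_{\ii-1} \sep \sss_\ii)$ for $\ii = 1 \wdots \pp$. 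The commutativity of Fig.~\ref{F:LeftMultD} gives $\ttt\,\sss_1 \pdots \sss_\pp = \sss'_1 \pdots \sss'_\pp\,\ttt_\pp$ in~$\MM$, while the validity of the left domino rule makes each pair $\sss'_\ii \sep \sss'_{\ii+1}$ $\NM$-normal; hence $\sss'_1 \sep \pdots \sep \sss'_\pp \sep \ttt_\pp$ is $\NM$-normal, and the exact analogue of~\eqref{E:NormalD1} holds, namely $\NM(\ttt \sep \ww) = \NMbar_{1 \sep 2 \sep \pdots \sep \pp}(\ttt \sep \ww)$ for every $\NM$-normal~$\ww$.

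The fellow-traveller estimate is then read directly off Fig.~\ref{F:LeftMultD}. Viewing the lower row as the path of $\NM(\gg)$ translated by~$\ttt$ and the upper row as the path of $\NM(\ttt\gg)$, the vertical edges $\ttt_0 \wdots \ttt_\pp$ are single letters of~$\SSS$ joining the two paths position by position. Consequently each vertex of $\NM(\ttt\gg)$ is at Cayley distance at most~one from the corresponding vertex of $\ttt\cdot\NM(\gg)$, and at most~one further from the corresponding vertex of $\NM(\gg)$ itself through the left-translating edge~$\ttt$, so the normal forms of $\gg$ and $\ttt\gg$ remain uniformly within distance~two. Since an arbitrary word is carried to its normal form length-preservingly by Prop.~\ref{P:RecipeQ}, and every element is obtained from~$1$ by successive left-multiplications by letters of~$\SSS$, this yields the $2$-Fellow traveller Property on the left.

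Finally, for a normalisation of class~$(3,4)$ I would invoke the mirror of Lemma~\ref{L:DominoQ}, giving the validity of the right domino rule of Def.~\ref{D:RightDomino}, and repeat the construction with the sequence of positions $\widetilde\delta_\pp$ of Prop.~\ref{P:RecipeRD} in place of~$\delta_\pp$; the symmetric ladder then yields the $2$-Fellow traveller Property on the right. The only point requiring genuine care is the bookkeeping behind the distance bound, \emph{i.e.}\ checking that the connecting edges~$\ttt_\ii$ really are single generators and that the length mismatch between $\NM(\gg)$ and $\NM(\ttt\gg)$, which differ in length by at most one, does not spoil the position-by-position comparison. This is exactly the content already verified for Cor.~\ref{C:LeftFTPD}, so the main work is recognising that class~$(4,3)$ supplies the one missing ingredient, the left domino rule.
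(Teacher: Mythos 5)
Your proposal is correct and follows essentially the same route as the paper, which proves this corollary by combining Lemma~\ref{L:DominoQ} (class~$(4,3)$ is equivalent to the validity of the left domino rule) with the ladder construction of Fig.~\ref{F:LeftMultD} exactly as in Cor.~\ref{C:LeftFTPD} and~\ref{C:LeftFTPS}, and handles class~$(3,4)$ by the mirror argument with the right domino rule and the positions~$\widetilde\delta_\pp$. The only blemish is cosmetic: length-preservation of~$\NM$ comes from~\eqref{E:NormSys1} in Def.~\ref{D:NormSys} rather than from Prop.~\ref{P:RecipeQ}, but this does not affect the argument.
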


Let us turn to another question, and mention (without proof) one further result. We start from the (easy) observation that the class of a normalisation~$(\SSS, \NM)$ can be characterised by algebraic relations satisfied by the map~$\NMbar$ and its translated copy.

\begin{lemm}\label{P:Class} {\rm\cite[Prop.\,3.3.5]{Dip}}
A quadratic normalisation~$(\SSS, \NM)$ is of left class~$\cc$ if, and only if, the map $\NMbar$ satisfies $\NMbar_{\ai\cc} = \NMbar_{\ai{\cc+1}} = \NMbar_{\aii{\cc+1}}$; it is of class~$(\cc, \cc)$ if, and only if, the map $\NMbar$ satisfies $\NMbar_{\ai\cc} = \NMbar_{\aii\cc}$.
\end{lemm}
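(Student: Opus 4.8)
The plan is to reduce both equivalences to two elementary observations about a quadratic normalisation~$(\SSS, \NM)$, both immediate consequences of Definition~\ref{D:Quad} and of~\eqref{E:NormSys3}. First I would record that for \emph{any} sequence of positions~$\uu$ and any $\SSS$-word~$\ww$ one has $\NM(\NMbar_\uu(\ww)) = \NM(\ww)$: this follows by applying~\eqref{E:NormSys3} once for each entry of~$\uu$, since replacing a length-two factor by its image under~$\NMbar$ (which on a pair agrees with~$\NM$) preserves the value of~$\NM$. In particular, when $\ww$ lies in~$\Pow\SSS3$ and $v := \NMbar_{\uu}(\ww)$ happens to be $\NM$-normal, we get $v = \NM(v) = \NM(\ww)$. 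Second, by condition~\ITEM1 of Definition~\ref{D:Quad}, a length-three word $v = v_1 \sep v_2 \sep v_3$ is $\NM$-normal exactly when both its length-two factors are fixed by~$\NMbar$, that is, when $\NMbar_1(v) = v$ and $\NMbar_2(v) = v$.

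With these in hand the first equivalence is direct. For the forward implication, assuming left class~$\cc$, every $\ww$ in~$\Pow\SSS3$ satisfies $\NMbar_{\ai\cc}(\ww) = \NM(\ww)$, which is $\NM$-normal; hence a further application of~$\NMbar$ at position~$1$ or~$2$ leaves it fixed, giving $\NMbar_{\ai{\cc+1}}(\ww) = \NM(\ww)$, while applying the left-class hypothesis to the length-three word~$\NMbar_2(\ww)$ together with the first observation gives $\NMbar_{\aii{\cc+1}}(\ww) = \NMbar_{\ai\cc}(\NMbar_2(\ww)) = \NM(\NMbar_2(\ww)) = \NM(\ww)$; so all three maps agree with~$\NM$ on~$\Pow\SSS3$. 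For the converse I only need the single equality $\NMbar_{\ai\cc} = \NMbar_{\ai{\cc+1}}$: writing $v = \NMbar_{\ai\cc}(\ww)$, the last position used in~$\ai\cc$ already makes one of the two factors of~$v$ normal (by the parity of~$\cc$), while $\ai{\cc+1}$ is~$\ai\cc$ with the opposite position appended, so $\NMbar_{\ai\cc} = \NMbar_{\ai{\cc+1}}$ says precisely that this extra application fixes~$v$, normalising the second factor as well. Thus $v$ is $\NM$-normal, and the first observation yields $v = \NM(\ww)$, i.e. left class~$\cc$.

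The second equivalence is handled the same way and is in fact the cleanest case. If $(\SSS, \NM)$ has class~$(\cc, \cc)$ then $\NMbar_{\ai\cc}(\ww) = \NM(\ww) = \NMbar_{\aii\cc}(\ww)$ for all~$\ww$ in~$\Pow\SSS3$, so $\NMbar_{\ai\cc} = \NMbar_{\aii\cc}$. Conversely, assume this equality and set $v := \NMbar_{\ai\cc}(\ww) = \NMbar_{\aii\cc}(\ww)$. The two alternating sequences $\ai\cc$ and~$\aii\cc$ end in \emph{opposite} positions, so the last step of~$\ai\cc$ makes one factor of~$v$ normal and the last step of~$\aii\cc$ makes the other factor normal; since both sequences produce the very same word~$v$, both factors of~$v$ are normal, hence $v$ is $\NM$-normal and $v = \NM(\ww)$ by the first observation. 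This gives both $\NMbar_{\ai\cc} = \NM$ and $\NMbar_{\aii\cc} = \NM$ on~$\Pow\SSS3$, that is, class~$(\cc, \cc)$.

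The only genuinely delicate point, and the one I would spell out carefully, is the parity bookkeeping: in each case one must check which of the two length-two factors of~$v$ is guaranteed normal by the final letter of the relevant alternating sequence, and verify that appending the opposite position really turns~$\ai\cc$ into~$\ai{\cc+1}$ (and prepending~$2$ turns~$\ai\cc$ into~$\aii{\cc+1}$, giving the factorisations $\NMbar_{\ai{\cc+1}} = \NMbar_j\comp\NMbar_{\ai\cc}$ and $\NMbar_{\aii{\cc+1}} = \NMbar_{\ai\cc}\comp\NMbar_2$ used above). Everything else is a formal consequence of the idempotence of~$\NMbar$ and of the two observations, so I expect no essential obstacle beyond keeping the indices straight.
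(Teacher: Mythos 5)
Your proposal is correct: the two preliminary observations (invariance of $\NM$ under each $\NMbar_\uu$ via \eqref{E:NormSys3}, and normality of a length-three word being equivalent to both of its length-two factors being fixed by $\NMbar$, per Def.\,\ref{D:Quad}\ITEM1) do yield all four implications, and your parity bookkeeping---that $\ai{\cc+1}$ appends the position opposite to the last letter of~$\ai\cc$, that $\aii{\cc+1}$ is $2$ prepended to~$\ai\cc$, and that $\ai\cc$ and $\aii\cc$ end in opposite positions---is accurate, so the computed word has both factors normal and the first observation identifies it with~$\NM(\ww)$. The survey states this lemma without proof, citing \cite{Dip}, and your argument is essentially the intended one, so there is nothing in the paper to diverge from.
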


So, in particular, if a normalisation $(\SSS, \NM)$ is of class~$(4, 3)$,  the map~$\NMbar$, which, by definition, is idempotent, satisfies $\NMbar_{212} = \NMbar_{2121} = \NMbar_{1212}$. The next result provides an axiomatisation of class~$(4, 3)$ normalisations: it shows that, conversely, every idempotent map satisfying the above relation necessarily stems from such a normalisation.

\begin{prop}\label{P:Axiom} {\rm\cite[Prop.\,4.3.1]{Dip}}
If~$\SSS$ is a set and~$\FF$ is a map from~$\Pow\SSS2$ to itself satisfying
\begin{equation}\label{E:Axiom}
\FF_{212} = \FF_{2121} = \FF_{1212},
\end{equation}
there exists a quadratic normalisation~$(\SSS, \NM)$ of class~$(4, 3)$ satisfying~$\FF=\NMbar$.
\end{prop}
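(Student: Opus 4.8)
The plan is to construct $\NM$ explicitly by the universal recipe of class $(4,3)$ and then verify the axioms of Def.\,\ref{D:NormSys} and Def.\,\ref{D:Quad}. Note first that $\FF$ is idempotent—this is implicit in the statement, being necessary for $\FF=\NMbar$ (every $\NMbar$ is idempotent)—and I use $\FF_\ii\comp\FF_\ii=\FF_\ii$ freely. For an $\SSS$-word $\ww$ of length $\pp$ I set $\NM(\ww):=\FF_{\delta_\pp}(\ww)$, with $\delta_\pp$ the sequence of Prop.\,\ref{P:RecipeD} ($\delta_2=1$, $\delta_\pp=\sh(\delta_{\pp-1})\sep1\sep\pdots\sep\pp{-}1$), and $\NM(\ww):=\ww$ for $\LG\ww\le1$. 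Then \eqref{E:NormSys1} and \eqref{E:NormSys2} are immediate, since each $\FF_\ii$ preserves length and acts trivially on words of length $\le1$. Because $\delta_2=1$, we get $\NMbar=\FF$ at once. Moreover $\delta_3=212$, so on $\Pow\SSS3$ we have $\NM=\FF_{212}$, which by \eqref{E:Axiom} equals $\FF_{1212}$ as well; hence $(\SSS,\NM)$ is simultaneously of right class $3$ and of left class $4$, i.e. of class $(4,3)$, directly from the hypothesis. What remains is to check that $\NM$ is genuinely a normalisation, i.e. \eqref{E:NormSys3}, and that it is quadratic.

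The engine is the combinatorial content of \eqref{E:Axiom}. Call a word \emph{$\FF$-normal} if each of its length-two factors is fixed by $\FF$. Writing $\FF_{212}=\FF_2\comp\FF_1\comp\FF_2$, the relation $\FF_{212}=\FF_{1212}$ reads $\FF_1\comp\FF_{212}=\FF_{212}$, so in any $\FF_{212}(\ww)$ the factor in positions $1,2$ is $\FF$-fixed, while the terminal $\FF_2$ makes the factor in positions $2,3$ $\FF$-fixed; conversely, if both length-two factors of a length-three word are $\FF$-fixed, then $\FF_2,\FF_1,\FF_2$ all act as the identity. Thus $\FF_{212}$ maps $\Pow\SSS3$ into the $\FF$-normal words and fixes exactly those—this is precisely the left domino rule of Def.\,\ref{D:LeftDomino} for the family of $\FF$-fixed length-two words, exactly as in the forward direction of Lemma~\ref{L:DominoQ}. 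With the left domino rule in hand, the left-insertion argument of Prop.\,\ref{P:RecipeD}/\ref{P:RecipeS} (Fig.\,\ref{F:LeftMultD}), now purely combinatorial, applies: by induction on $\pp$ one shows that for an $\FF$-normal word $\ww$ of length $\pp$ and any $\ttt\in\SSS$ the word $\FF_{1\sep2\sep\pdots\sep\pp}(\ttt\sep\ww)$ is $\FF$-normal, and then that $\NM(\ww)=\FF_{\delta_\pp}(\ww)$ is $\FF$-normal for every $\ww$ while $\NM$ fixes every $\FF$-normal word. This yields condition \ITEM1 of Def.\,\ref{D:Quad} (a word is $\NM$-normal iff all its length-two factors are), condition \ITEM2 being built into the definition of $\NM$, and it yields idempotency of $\NM$.

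It remains to prove \eqref{E:NormSys3}, and I would reduce it to the single statement that one $\FF$-move never changes the output: $\NM\comp\FF_\ii=\NM$ for every position $\ii$. This suffices, for $\NM(\ww)$ is obtained from $\ww$ by a sequence of single $\FF$-moves, so $\uu\sep\NM(\ww)\sep\vv$ is obtained from $\uu\sep\ww\sep\vv$ by the same moves performed at positions shifted by $\LG\uu$, whence $\NM(\uu\sep\NM(\ww)\sep\vv)=\NM(\uu\sep\ww\sep\vv)$. The invariance $\NM\comp\FF_\ii=\NM$ is proved by induction on $\LG\ww$. For $\LG\ww\le2$ it is idempotency of $\FF$; for $\LG\ww=3$ it is exactly \eqref{E:Axiom} with idempotency, since $\FF_{212}\comp\FF_1=\FF_{1212}=\FF_{212}$ and $\FF_{212}\comp\FF_2=\FF_2\comp\FF_1\comp\FF_2\comp\FF_2=\FF_{212}$. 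The inductive step uses the decomposition $\delta_\pp=\sh(\delta_{\pp-1})\sep1\sep\pdots\sep\pp{-}1$, the far-commutation relations $\FF_\ii\comp\FF_\jj=\FF_\jj\comp\FF_\ii$ for $\vert\ii-\jj\vert\ge2$, and \eqref{E:Axiom} applied at each consecutive triple of positions, to push the extra $\FF_\ii$ through the recipe and absorb it.

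The main obstacle is precisely this last inductive coherence step: establishing $\FF_{\delta_\pp}\comp\FF_\ii=\FF_{\delta_\pp}$ in full generality is a Matsumoto/Tits-type coherence statement, globalising the purely local relation \eqref{E:Axiom} (together with idempotency and far-commutation) into independence of the final normal word from the order in which $\FF$ is applied. All the remaining verifications—length preservation, $\NMbar=\FF$, the class computation, condition \ITEM2 of Def.\,\ref{D:Quad}, and the reduction of \eqref{E:NormSys3} to single-move invariance—are routine once the left domino rule has been extracted from \eqref{E:Axiom}. Finally, one may record that $(\SSS,\NM)$ is then a normalisation \emph{for} the monoid presented by $\MON{\SSS}{\{\sss\sep\ttt=\FF(\sss\sep\ttt)\mid\sss,\ttt\in\SSS\}}$, in accordance with \eqref{E:QuadPres}.
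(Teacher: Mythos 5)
Your construction is the same as the paper's (and as \cite[Prop.\,4.3.1]{Dip}): take the recipe of~\eqref{E:RecipeQ} as the \emph{definition} of~$\NM$, i.e.\ $\NM(\ww) := \FF_{\delta_\pp}(\ww)$ for $\LG\ww = \pp$, and verify the axioms. Your routine verifications are sound: \eqref{E:NormSys1}--\eqref{E:NormSys2} and $\NMbar = \FF$ are immediate, the extraction of the left domino rule from~\eqref{E:Axiom} is exactly the mechanism of Lemma~\ref{L:DominoQ} read backwards, the induction of Fig.\,\ref{F:LeftMultD} then shows that $\NM(\ww)$ is always $\FF$-normal and that $\NM$ fixes $\FF$-normal words, and your reduction of~\eqref{E:NormSys3} to the single-move invariance $\NM \comp \FF_\ii = \NM$ is correct. (One slip of convention: under Notation~\ref{N:Positions}, $\FF_1 \comp \FF_{212}$ is $\FF_{2121}$, not $\FF_{1212}$, so the fact that positions $1,2$ of $\FF_{212}(\ww)$ are $\FF$-fixed comes from $\FF_{212} = \FF_{2121}$, while $\FF_{212} \comp \FF_1 = \FF_{1212}$ is the pre-absorption identity; since~\eqref{E:Axiom} supplies both equalities, nothing breaks, but in the inductive step you must keep the two directions straight.)

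However, the proof is not complete, and the gap sits exactly where the paper says the difficulty lies (``the result is not trivial, and there is no counterpart for higher classes''): the coherence identity $\FF_{\delta_\pp} \comp \FF_\ii = \FF_{\delta_\pp}$ for arbitrary~$\pp$ and~$\ii$, which you dispatch with ``push the extra $\FF_\ii$ through the recipe and absorb it'' and then yourself flag as the main obstacle. This step cannot be settled by a generic Matsumoto/Tits-type argument: the moves~$\FF_\ii$ are idempotent and non-invertible, so they generate a monoid action rather than a group action, and far-commutation together with the single asymmetric relation~\eqref{E:Axiom} does not yield confluence of arbitrary application orders by abstract nonsense. Indeed, if local absorption of one extra move along consecutive triples were enough, the same scheme would prove the analogous statement for the class-$(\cc{+}1,\cc)$ relations for every~$\cc$, and the paper states that no such counterpart exists for higher classes; so any correct argument must exploit the specific $(4,3)$ structure. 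In \cite{Dip} this is done by a delicate induction that combines the left domino rule with precise bookkeeping of which prefixes of the intermediate words are already $\FF$-normal at each stage of the recipe; some substitute for that finer analysis is what your sketch is missing, and without it the central claim~\eqref{E:NormSys3} remains unproved.
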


The problem is to extend~$\FF$ into a map~$\FF^*$ on~$\SSS^*$ such that~$(\SSS,\FF^*)$ is a quadratic normalisation of class~$(4, 3)$. The idea of the proof is to take the recipe given by~\eqref{E:RecipeQ} as a definition, and to show that the resulting map has the expected properties. The result is not trivial, and there is no counterpart for higher classes.

The specific reasons why the result works for class~$(4, 3)$ are the algebraic properties of the monoid that admits the presentation 
\begin{equation*}
\bigg\langle \sig1 \wdots \sig{\pp-1} \ \bigg\vert\ 
\sigg\ii2 = \sig\ii \text{\ for $\ii \ge 1$,}\  \begin{matrix}
\sig\ii \sig j = \sig j \sig\ii 
&\text{for $\jj - \ii \ge 2$}\\
\sig j \sig\ii \sig j = \sig\ii \sig j \sig\ii \sig\jj = \sig j \sig\ii \sig\jj \sig\ii 
&\text{for $\jj = \ii+1$}
\end{matrix}
\ \smash{\bigg\rangle^{\!\!+}}.
\end{equation*}
This monoid is a sort of asymmetric version of a symmetric group (or rather of the corresponding Hecke algebra at $\qq = 0$), which is considered and used by A.\,Hess and V.\,Ozornova in~\cite{HeOz}, and investigated by D.\,Krammer in~\cite{KraArt}.

\subsection{Characterising Garside normalisations}\label{SS:CharGar}

We observed in Prop.\,\ref{P:GarClass} that every Garside normalisation is of class~$(4, 3)$, a result that is optimal in general, since the right domino rule fails for the finite Garside family of Fig.\,\ref{F:Artin}, implying that the associated normalisation is not of class~$(3, 3)$. Conversely, it is easy to see that the lexicographic normalisation of Ex.\,\ref{X:Normal}, which is of class~$(3, 3)$, hence a fortiori~$(4, 3)$, does not stem from a Garside family. So the question arises of characterising Garside normalisations among all normalisations of class~$(4, 3)$. The answer is simple.

\begin{defi}
\rm Assume that $(\SSS, \NM)$ is a (quadratic) normalisation for a mon\-oid~$\MM$. We say that $(\SSS, \NM)$ is \emph{left-weighted} if, for all $\sss, \ttt, \sss', \ttt'$ in~$\SSS$, the equality $\sss'\vert\ttt' = \NM(\sss \sep \ttt)$ implies $\sss \dive \sss'$ in~$\MM$. 
\end{defi}

Thus, a normalisation~$(\SSS, \NM)$ is left-weighted if, for every~$\sss$ in~$\SSS$, the first entry of~$\NM(\sss \sep \ttt)$ is always a right-multiple of~$\sss$ in the associated monoid.

\begin{prop}\label{P:GarCharac} {\rm\cite[Prop.\,5.4.3]{Dip}}
Assume that~$(\SSS, \NM)$ is a quadratic normalisation mod~$1$ for a monoid~$\MM$ that is left-cancellative and contains no nontrivial invertible element. Then the following are equivalent:

\ITEM1 The family~$\SSS$ is a Garside family in~$\MM$ and $\NM = \NMS$ holds.

\ITEM2 The normalisation~$(\SSS, \NM)$ is of class~$(4, 3)$ and is left-weighted.
\end{prop}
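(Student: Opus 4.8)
The plan is to treat the two implications separately. The implication \ITEM1$\Rightarrow$\ITEM2 is essentially a matter of collecting earlier facts: class~$(4,3)$ is exactly Prop.~\ref{P:GarClass}, and left-weightedness is read off the proof of Lemma~\ref{L:PsiS}, where it is shown that, if $\sss'\sep\ttt'$ is the length-two $\SSS$-normal decomposition of~$\sss\ttt$, then $\sss$ left-divides~$\sss'$. So all the work lies in the converse.

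For \ITEM2$\Rightarrow$\ITEM1, the strategy is to prove that the $\NM$-normal words of length two coincide with the $\SSS$-normal words of Def.~\ref{D:NormalS}. Granting this, we are done: since $(\SSS,\NM)$ is a normalisation for~$\MM$ mod~$1$, every element of~$\MM$ has an $\NM$-normal decomposition, hence an $\SSS$-normal one, so $\SSS$ is a Garside family by Def.~\ref{D:GarNormal}; and the coincidence of normal words, together with uniqueness of length-two normal decompositions (Lemma~\ref{L:Uniqueness}), identifies~$\NMbar$ with~$\NMSbar$ and hence~$\NM$ with~$\NMS$ through the common recipe~$\delta_\pp$ of Prop.~\ref{P:RecipeQ}.

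The cornerstone is a \emph{head property}: for every $\gg\ne1$ in~$\MM$, the first letter $\HH_\SSS(\gg)$ of the $\NM$-normal form of~$\gg$ (well-defined by Prop.~\ref{P:GenNF}) is an $\SSS$-head of~$\gg$, meaning that every $\sss$ in~$\SSS$ dividing~$\gg$ divides~$\HH_\SSS(\gg)$. To see this, write $\gg=\sss\hh$ and fix any $\SSS$-decomposition $\rr_1\sep\pdots\sep\rr_\mm$ of~$\hh$ (possible as $\SSS$ generates~$\MM$). Normalising $\sss\sep\rr_1\sep\pdots\sep\rr_\mm$ by the recipe of Prop.~\ref{P:RecipeQ} (available because the class is~$(4,3)$) amounts, by the shape of~$\delta_{\mm+1}$, to first normalising $\rr_1\sep\pdots\sep\rr_\mm$ and then left-multiplying by~$\sss$ as in Fig.~\ref{F:LeftMultD}. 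The first step of that left-multiplication replaces the leading pair by $\NMbar(\sss\sep\qq_1)$, whose first entry equals~$\HH_\SSS(\gg)$ and is never modified afterwards; left-weightedness then gives $\sss\dive\HH_\SSS(\gg)$.

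With the head property available, the length-two equivalence splits in two. If $\sss_1\sep\sss_2$ is $\SSS$-normal and $\NMbar(\sss_1\sep\sss_2)=\sss'_1\sep\sss'_2$, then left-weightedness gives $\sss_1\dive\sss'_1$, while \eqref{E:NormalS} applied with $\ff=1$ to the divisor $\sss'_1$ of $\sss_1\sss_2=\sss'_1\sss'_2$ gives $\sss'_1\dive\sss_1$; as $\MM$ has no nontrivial invertible element, $\sss_1=\sss'_1$, hence $\sss_2=\sss'_2$ and $\sss_1\sep\sss_2$ is $\NM$-normal. Conversely, assume $\sss_1\sep\sss_2$ is $\NM$-normal and $\sss\dive\ff\sss_1\sss_2$ with $\sss\in\SSS$ and $\ff=\ttt_1\pdots\ttt_\nn$ over~$\SSS$. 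The hard part is the arbitrary left factor~$\ff$ in~\eqref{E:NormalS}, which left-weightedness alone does not control; the resolving device is the observation that, under the letter-by-letter left-multiplication of Fig.~\ref{F:LeftMultD}, the leading letter produced at each step depends only on the incoming letter and the current leading letter, never on the tail of the word. Consequently, left-multiplying $\sss_1\sep\sss_2$ by $\ttt_\nn,\pdots,\ttt_1$ and left-multiplying the length-one word~$\sss_1$ by the same letters yield normal forms with the same first letter, that is, $\HH_\SSS(\ff\sss_1\sss_2)=\HH_\SSS(\ff\sss_1)$. The head property then gives $\sss\dive\HH_\SSS(\ff\sss_1\sss_2)=\HH_\SSS(\ff\sss_1)\dive\ff\sss_1$, which is exactly~\eqref{E:NormalS}, completing the argument.
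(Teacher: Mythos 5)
Your proposal is correct, and for the delicate implication \ITEM2$\Rightarrow$\ITEM1 it assembles the argument differently from the paper. The shared core is the head property: both you and the paper show that the first letter of the $\NM$-normal form of~$\gg$ is an $\SSS$-head, and your derivation of it is sound --- by Prop.~\ref{P:RecipeQ} the recipe~$\delta_{\mm+1}$ normalises $\sss \sep \rr_1 \sep \pdots \sep \rr_\mm$ by first normalising the tail and then running the domino of Fig.~\ref{F:LeftMultD}, the slot-one letter is set by the position-$1$ application of~$\NMbar$ and never touched again, and left-weightedness gives $\sss \dive \HH_\SSS(\gg)$. But where the paper then establishes that $\SSS$ is closed under right-divisor and invokes the recognition criterion of Prop.~\ref{P:RecGarII}\ITEM1, you bypass both: your observation that, under the domino left-multiplication, the new leading letter depends only on the incoming letter and the current leading letter yields $\HH_\SSS(\ff\sss_1\sss_2) = \HH_\SSS(\ff\sss_1)$, which is exactly what is needed to control the arbitrary left factor~$\ff$ in~\eqref{E:NormalS}; this identifies length-two $\NM$-normal words with $\SSS$-normal ones, so Garside-family-ness follows directly from Def.~\ref{D:GarNormal} (the normalisation already supplies normal decompositions), and $\NM = \NMS$ drops out via $\NMbar = \NMSbar$ and the common recipe. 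What each approach buys: the paper's route produces structural closure properties of~$\SSS$ of independent interest and reuses general machinery, while yours is more self-contained within the quadratic framework and makes explicit that class~$(4,3)$, that is, the left domino rule of Lemma~\ref{L:DominoQ}, is the only dynamical input. Two small points you treat implicitly but correctly: the leading letter~$\HH_\SSS(\gg)$ is well defined across decompositions of different lengths because~\eqref{E:Neutral1} pushes the neutral letter to the end, and your direction ``$\SSS$-normal implies $\NM$-normal'' (antisymmetry of~$\dive$ via left-cancellativity and absence of nontrivial invertible elements) is in fact not needed for the conclusion, since $\NMbar(\sss\sep\ttt)$ being $\SSS$-normal plus Lemmas~\ref{L:PsiS} and~\ref{L:Uniqueness} already forces $\NMbar = \NMSbar$.
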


The implication \ITEM1 $\Rightarrow$ \ITEM2 is almost straightforward. 
Indeed, if $\SSS$ is a Garside family and $\sss'_1\vert\sss'_2 = \NMS(\sss_1 \sep \sss_2)$ holds, we have $\sss_1 \dive \sss'_1 \sss'_2$ with~$\sss \in \SSS$, so the assumption that $\sss'_1 \sep \sss'_2$ is $\SSS$-normal implies $\sss_1 \dive \sss'_1$. Hence $\NMS$ is left-weighted.

The converse implication is much more delicate. The main point is to show that $\SSS$ is a Garside family in~$\MM$, which is proved by establishing that $\SSS$ is closed under right-divisor and every element of the ambient monoid~$\MM$ has an $\SSS$-head, and then using Prop.\,\ref{P:RecGarII}\ITEM1.

\subsection{Connection with rewriting systems}\label{SS:Termination}

There exists a simple connection between normalisations as introduced above and rewriting systems. We refer to~\cite{DeJo} or~\cite{Klo} for basic terminology.

\begin{lemm}\label{P:QuadNormRewr}
If $(\SSS,\NM)$ is a quadratic normalisation for a monoid~$\MM$, then putting $\RR = \{\sss\sep\ttt \to \NMbar\HS{0.3}(\sss\sep\ttt) \mid \sss,\ttt\in\SSS, \sss\sep\ttt\neq\NMbar\HS{0.3}(\sss\sep\ttt)\}$ provides a rewriting system~$(\SSS, \RR)$ that is quadratic, reduced, normalising, confluent, and presents~$\MM$.

Conversely, if $(\SSS,\RR)$ is a quadratic, reduced, normalising, and confluent rewriting system presenting a monoid~$\MM$, putting 
$\NM(\ww) = \ww'$, where~$\ww'$ is the $\RR$-normal form of~$\ww$, provides a quadratic normalisation~$(\SSS, \NM)$ for~$\MM$.

The above correspondences  are inverses of one another.
\end{lemm}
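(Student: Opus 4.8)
The plan is to treat the two constructions separately and then check that they invert one another. \emph{From a normalisation to a rewriting system:} starting from a quadratic normalisation $(\SSS,\NM)$, I would verify the five properties of the associated $\RR$ in turn. That $\RR$ is \emph{quadratic} is immediate from~\eqref{E:NormSys1}, every rule $\sss\sep\ttt \to \NMbar\HS{0.3}(\sss\sep\ttt)$ having both sides of length two. For \emph{reducedness}, note that $\NMbar\HS{0.3}(\sss\sep\ttt) = \NM(\sss\sep\ttt)$ is $\NM$-normal by idempotence of~$\NM$ (a consequence of~\eqref{E:NormSys3}), hence is itself $\RR$-irreducible; and a length-two left-hand side can only be rewritten by the rule carrying that very left-hand side, so no left-hand side is reducible by another rule. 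For \emph{normalisation}, Def.~\ref{D:Quad}\ITEM2 provides positions~$\uu$ with $\NM(\ww) = \NMbar_\uu(\ww)$, and each position of~$\uu$ where the corresponding length-two factor is non-normal is a genuine application of a rule of~$\RR$ (the others acting trivially), so $\ww$ rewrites to the irreducible word~$\NM(\ww)$. Finally, $\RR$ \emph{presents}~$\MM$ by~\eqref{E:QuadPres}.

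\emph{Confluence} is the step I would treat most carefully, and the point is that it follows not from termination (we have only weak normalisation) but directly from~\eqref{E:NormSys3}: a single rewriting step preserves the value of~$\NM$. Indeed, if $\ww = \uu\sep\sss\sep\ttt\sep\vv$ rewrites to $\ww' = \uu\sep\NMbar\HS{0.3}(\sss\sep\ttt)\sep\vv$, then, since $\NMbar$ is the restriction of~$\NM$ to~$\Pow\SSS2$, relation~\eqref{E:NormSys3} gives $\NM(\ww') = \NM(\ww)$; hence $\NM$ is constant along rewriting sequences. As the $\RR$-irreducible words are, by Def.~\ref{D:Quad}\ITEM1, exactly the $\NM$-normal words, and those are fixed by~$\NM$, every irreducible word reachable from~$\ww$ must equal~$\NM(\ww)$. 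So normal forms are unique, which yields confluence and, at the same time, identifies the $\RR$-normal-form map with~$\NM$.

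\emph{From a rewriting system to a normalisation:} for the converse I would set $\NM(\ww)$ to be the $\RR$-normal form of~$\ww$, well defined by normalisation (existence) and confluence (uniqueness). Then~\eqref{E:NormSys1} holds because quadratic rules preserve length; \eqref{E:NormSys2} because a length-one word has no length-two factor and is already irreducible; and~\eqref{E:NormSys3} because the reduction $\ww \to \NM(\ww)$ may be carried out inside any context, so $\uu\sep\ww\sep\vv$ and $\uu\sep\NM(\ww)\sep\vv$ are $\RR$-equivalent and share a normal form. The normalisation is \emph{quadratic}: a word is $\NM$-normal iff $\RR$-irreducible iff none of its length-two factors is a left-hand side iff all of them are $\NM$-normal, which is Def.~\ref{D:Quad}\ITEM1; and reading off an explicit reduction $\ww \to \NM(\ww)$ produces a position sequence witnessing Def.~\ref{D:Quad}\ITEM2. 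That $(\SSS,\NM)$ is a normalisation \emph{for}~$\MM$ holds because the relations $\ww = \NM(\ww)$ and the rules of~$\RR$ generate the same congruence on~$\SSS^*$.

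Finally, for \emph{inverseness}: going normalisation $\to \RR \to$ normalisation returns~$\NM$, by the identification of the $\RR$-normal form with~$\NM$ obtained in the confluence step. The delicate direction is $\RR \to$ normalisation $\to \RR$, and this is where the \emph{reduced} hypothesis is indispensable: since each rule $l \to r$ has irreducible right-hand side, $r$ is the $\RR$-normal form of its length-two left-hand side~$l$, so the rule is recovered as $l \to \NMbar\HS{0.3}(l)$; conversely, reducedness ensures that each non-normal length-two word is the left-hand side of exactly one rule, so no spurious rules appear. I expect the two points deserving genuine care to be this cheap derivation of confluence from~\eqref{E:NormSys3} and this last recovery of~$\RR$; the remaining items are routine unwindings of the definitions.
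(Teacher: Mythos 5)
Your proof is correct. The paper states this lemma without proof (it is imported from~\cite{Dip}), so there is no in-paper argument to compare against; your route is the natural one, and the single genuinely non-routine point is exactly the one you isolate: confluence cannot be derived from termination plus Newman's lemma, since quadratic normalisations need not yield terminating systems (cf.\ Prop.~\ref{P:Termin44}), but it does follow from the invariance of~$\NM$ under a single rewriting step, which is an immediate consequence of~\eqref{E:NormSys3} together with $\NMbar$ being the restriction of~$\NM$; this simultaneously identifies the $\RR$-normal-form map with~$\NM$ and settles one direction of inverseness, as you note. Two small organisational remarks. First, already in your verification of Def.~\ref{D:Quad}\ITEM2 for the converse you need reducedness: to read a reduction $\ww \to^* \NM(\ww)$ as a sequence~$\NMbar_\uu$, each rule application $l \to \rr$ must coincide with $\NMbar$ applied at that position, that is, $\rr = \NMbar\HS{0.3}(l)$, which is exactly the fact (right-hand sides irreducible, hence equal to the normal form of their left-hand side) that you only establish in the final paragraph; it should be proved before, or cross-referenced at, that step. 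Second, in the recovery of~$\RR$, the ``exactly one rule per reducible length-two word'' splits into existence, which uses only quadraticity (the sole length-two factor of a length-two word is itself, so a reducible length-two word is itself a left-hand side), and uniqueness, which follows from reducedness since a left-hand side shared by two distinct rules would be reducible by the \emph{other} rule; one should also observe that reducedness forbids rules with $l = \rr$, so no rule of~$\RR$ is lost to the side condition $\sss\sep\ttt \neq \NMbar\HS{0.3}(\sss\sep\ttt)$.
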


\begin{exam}\label{X:AbelianRewr}
\rm If $(\At_\nn,\NM^\Lex)$ is the lexicographic normalisation for the free commutative monoid~$\NNNN^\nn$ , the associated quadratic rewriting system~$(\At_\nn, \RR_\nn)$ consists of the $\nn(\nn-1)/2$ rules $\tta_\ii \sep \tta_\jj \to \tta_\jj \sep \tta_\ii$ for $1 \le \jj < \ii \le \nn$. 
\end{exam}

The correspondence of Lemma.\,\ref{P:QuadNormRewr} extends to  a normalisation mod a neutral letter~$\ee$ at the expense of defining a new system~$\RR_\ee$ by replacing $\sss\sep\ttt \to \NMbar\HS{0.3}(\sss\sep\ttt)$ with $\sss\sep\ttt \to \coll_{\ee}(\NMbar\HS{0.3}(\sss\sep\ttt))$, that is, of erasing the involved $\NM$-neutral letter.

By Lemma~\ref{P:QuadNormRewr}, a quadratic normalisation~$(\SSS,\NM)$ yields a reduced quadratic rewriting system~$(\SSS,\RR)$ that is normalising and confluent, meaning that, from every $\SSS$-word, there is a rewriting sequence leading to a $\NM$-normal word. This however does not rule out the possible existence of infinite rewriting sequences: the system~$(\SSS, \RR)$ need not a priori be terminating. Here again, class~$(4, 3)$ is the point where transition occurs.

\begin{prop}\label{P:43Terminating} {\rm\cite[Prop.\,5.7.1]{Dip}}
If $(\SSS, \NM)$ is a quadratic normalisation of \break class~$(4,3)$, then the associated rewriting system~$(\SSS, \RR)$ is terminating, and so is $(\SSS \setminus \{\ee\},\RR_\ee)$ if~$\ee$ is a $\NM$-neutral element of~$\SSS$. More precisely, every rewriting sequence from a length-$\pp$ word has length at most $2^\pp - \pp - 1$.
\end{prop}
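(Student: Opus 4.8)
The plan is to deduce termination from an explicit bound on the length of \emph{every} rewriting sequence: once we know that no sequence issued from a length-$\pp$ word can exceed $2^\pp-\pp-1$ steps, termination of $(\SSS,\RR)$ is immediate, and the neutral-letter system $(\SSS\setminus\{\ee\},\RR_\ee)$ inherits it because each $\RR_\ee$-step is obtained from an $\RR$-step by erasing~$\ee$, so an $\RR_\ee$-sequence is no longer than the corresponding $\RR$-sequence. Thus everything reduces to proving, by induction on~$\pp$, that a length-$\pp$ word admits no rewriting sequence longer than $f(\pp):=2^\pp-\pp-1$. I would first record the two identities $f(\pp)=2\,f(\pp-1)+(\pp-1)$ and $f(\pp)=\binom{\pp}{2}+\binom{\pp}{3}+\pdots+\binom{\pp}{\pp}$, the latter counting the subsets of $\{1\wdots\pp\}$ of size at least two: the first drives the induction, while the second suggests the alternative of attaching to each rewriting step a distinct such subset. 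The base cases $\pp\le 2$ are clear, since a length-two word is rewritten at most once because $\NMbar$ is idempotent.

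For the inductive step the engine is the left domino rule, which is available \emph{precisely} because $(\SSS,\NM)$ has class~$(4,3)$ (Lemma~\ref{L:DominoQ}). The key auxiliary fact I would isolate is a \emph{repair lemma}: if an $\SSS$-word of length~$\qq$ is $\NM$-normal except possibly at its first length-two factor, then every rewriting sequence issued from it has length at most $\qq-1$. The point is that the left domino rule forces the unique defect to travel strictly to the right: firing the position carrying the defect makes that factor normal and keeps the factor on its left normal, so the defect can only reappear one step further right, exactly as in the left-multiplication sweep of Fig.~\ref{F:LeftMultD} used to prove Prop.~\ref{P:RecipeS}.

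I would then peel off the first letter, writing a length-$\pp$ word as $\sss_1\sep\ww'$ with $\ww'$ of length $\pp-1$, and analyse an arbitrary maximal rewriting sequence by cutting it at the steps acting on position~$1$, the only steps that can modify~$\sss_1$. Between two consecutive such cuts the sequence rewrites the suffix~$\ww'$ internally, contributing at most $f(\pp-1)$ steps for the first block by the induction hypothesis; each later block starts from a suffix that differs from an $\NM$-normal one only in its first factor, since it has just been hit by a position-$1$ step, so by the repair lemma it costs at most $\pp-2$ internal steps, the position-$1$ steps being counted apart. Arranging these contributions is designed to reproduce the recurrence $f(\pp)=2\,f(\pp-1)+(\pp-1)$, and the symmetric argument based on the right domino rule yields the same bound for normalisations of class~$(3,4)$.

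The main obstacle is exactly this bookkeeping: a hostile sequence may fire position~$1$ \emph{before} the suffix has been normalised, so the blocks are genuinely coupled and the repair lemma cannot be invoked verbatim at every cut. The way around it is to run the count against a potential equal to the maximal number of remaining steps, and to show that each position-$1$ step raises this potential by at most the cost of one left-to-right sweep while every other step lowers it by one; this is where the class-$(4,3)$ hypothesis is used in full, and it is the step I expect to require the most care, being the combinatorial heart carried out in~\cite{Dip}.
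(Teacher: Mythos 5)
Much of your scaffolding is sound. The ``repair lemma'' is correct: in a word whose only possibly non-normal factor is the first one, there is at every stage at most one defective position, and the left domino rule --- available exactly because of class~$(4,3)$, by Lemma~\ref{L:DominoQ} --- shows, as in Fig.~\ref{F:LeftMultD}, that firing the defective position cannot break the factor on its left; so every rewriting sequence from such a word is the forced left-to-right sweep, of length at most $q-1$. The base cases, the recurrence $f(\pp)=2f(\pp-1)+(\pp-1)$, and the reduction of $(\SSS\setminus\{\ee\},\RR_\ee)$ to $(\SSS,\RR)$ are also essentially fine, though the latter needs a word more than ``erasing~$\ee$'': after the first step, the two letters acted on by an $\RR_\ee$-step may be separated by $\ee$'s in the lifted word, and one must first push these aside using the rules $\ee\sep\sss\to\sss\sep\ee$, which do lie in~$\RR$ because $\ee$ is $\NM$-neutral; this only lengthens the lifted $\RR$-sequence, so the bound still transfers.

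The genuine gap is the inductive step itself, which you name but do not close. Cutting a sequence at the $k$ position-$1$ steps gives a count of roughly $f(\pp-1)+k+k(\pp-2)$, and nothing in your argument bounds~$k$: a factor at position~$1$ can be re-broken by a position-$2$ step, which can in turn be re-enabled from positions $1$ and~$3$, and it is precisely this feedback loop that makes quadratic, reduced, normalising, confluent systems non-terminating in general --- Prop.~\ref{P:Termin44} exhibits a class-$(4,4)$ example with a length-$3$ cycle, so any correct argument must use class $(4,3)$ to break exactly this loop. Your proposed repair is moreover circular: a ``potential equal to the maximal number of remaining steps'' is not known to be well defined (finite) before termination is established, which is the very statement at issue; and even granting its claimed behaviour, deriving $2^\pp-\pp-1$ would still require the missing bound on the number of position-$1$ steps. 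The argument of \cite[Prop.\,5.7.1]{Dip}, as this survey sketches right after Prop.~\ref{P:43Terminating}, instead uses an a priori finite potential: the distance from the current word to its image under~$\NM$, finite from the outset (at most $\pp(\pp-1)/2$, by Prop.~\ref{P:RecipeQ}), which the left domino rule shows cannot increase under any rewriting step and must decrease at predictable intervals. Since you explicitly defer this combinatorial heart to~\cite{Dip}, what you have is a correct frame together with one verified auxiliary lemma, not a proof of the proposition.
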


The (delicate) proof consists in showing that every sequence of $\RR$-rewritings inevitably approaches a $\NM$-normal word: because of the left domino rule, in whatever order the rewritings are operated, the distance between the current word and its image under~$\NM$ cannot increase, and it must even decrease at some predictible intervals. 

Either by taking into account the influence of the right domino rule in the proof of the above result, or by an alternative direct argument based on the classical Matsumoto's lemma for the symmetric group~$\Sym_\pp$, one can show that, in the case of a normalisation of class~$(3,3)$, the upper bound $2^\pp - \pp - 1$ drops to~$\pp(\pp-1)/2$.

Owing to Prop.\,\ref{P:GarClass}, we obtain as a direct application of Prop.\,\ref{P:43Terminating}:

\begin{coro}\label{C:43Terminating}
Assume that $\MM$ is a left-cancellative monoid with no nontrivial invertible element and $\SSS$ is a Garside family in~$\MM$. Then the associated rewriting system is terminating. More precisely, every rewriting sequence from a length-$\pp$ word has length at most $2^\pp - \pp - 1$.
\end{coro}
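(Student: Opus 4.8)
The plan is to obtain the statement as a direct application of Prop.\,\ref{P:43Terminating}, the only preparatory work being to check that the Garside normalisation genuinely falls under its hypotheses. First I would record that, for a Garside family~$\SSS$ in~$\MM$, the pair $(\SSS, \NMS)$ is a quadratic normalisation for~$\MM$ mod~$1$. That it is a normalisation for~$\MM$ mod~$1$ is the observation made just after Prop.\,\ref{P:GenNF}; that it is quadratic is Ex.\,\ref{X:Normal}, resting on Prop.\,\ref{P:RegularS} for condition~\ITEM1 of Def.\,\ref{D:Quad} (being $\SSS$-normal is a length-two local property) and on Prop.\,\ref{P:RecipeS} for condition~\ITEM2, the latter even exhibiting the uniform sequence of positions~$\delta_\pp$ with $\NMS(\ww) = \NMSbarr{\delta_\pp}(\ww)$ for every length-$\pp$ word~$\ww$.

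Next I would invoke Prop.\,\ref{P:GarClass}: under the present hypotheses ($\MM$ left-cancellative with no nontrivial invertible element and $\SSS$ a Garside family), the quadratic normalisation $(\SSS, \NMS)$ is of class~$(4,3)$. With this in hand the corollary is immediate. Applying Prop.\,\ref{P:43Terminating} to $(\SSS, \NMS)$ shows that the rewriting system associated with $(\SSS, \NMS)$ through Lemma~\ref{P:QuadNormRewr}---whose rules are $\sss \sep \ttt \to \NMSbar(\sss \sep \ttt)$ for the non-$\NMS$-normal pairs---is terminating, and that every rewriting sequence issued from a length-$\pp$ word has length at most $2^\pp - \pp - 1$. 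Since $1$ is an $\NMS$-neutral element of~$\SSS$, the same proposition simultaneously yields termination of the collapsed system with the same bound.

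There is no real obstacle hidden in the corollary itself: all the substance sits in Prop.\,\ref{P:GarClass} and Prop.\,\ref{P:43Terminating}, which I am allowed to assume. The only point I would take care to state cleanly is the identification of ``the associated rewriting system'' of the corollary with the system canonically attached to $(\SSS, \NMS)$ by Lemma~\ref{P:QuadNormRewr}, so that the bound $2^\pp - \pp - 1$ transfers verbatim; once the quadratic structure of $(\SSS, \NMS)$ has been recorded as above, this identification is a matter of definition.
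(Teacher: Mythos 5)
Your proposal is correct and is exactly the paper's route: the paper introduces the corollary with the words ``Owing to Prop.\,\ref{P:GarClass}, we obtain as a direct application of Prop.\,\ref{P:43Terminating}'', which is precisely your chain (quadraticity of $(\SSS, \NMS)$ via Prop.\,\ref{P:RegularS} and Prop.\,\ref{P:RecipeS}, class~$(4,3)$ via Prop.\,\ref{P:GarClass}, then termination with the bound $2^\pp - \pp - 1$ from Prop.\,\ref{P:43Terminating}). Your added care in identifying the rewriting system with the one attached to $(\SSS, \NMS)$ by Lemma~\ref{P:QuadNormRewr} and in noting that $1$ is $\NMS$-neutral makes explicit what the paper leaves implicit, and is entirely sound.
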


By contrast, we have:

\begin{prop}\label{P:Termin44}
There exists a quadratic normalisation of class~$(4, 4)$ such that the associated rewriting system is not terminating.
\end{prop}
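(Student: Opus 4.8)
The plan is to exhibit an explicit finite witness. By Lemma~\ref{P:QuadNormRewr}, any quadratic normalisation $(\SSS,\NM)$ yields a rewriting system $(\SSS,\RR)$ that is automatically reduced, normalising and confluent; the only property left open is termination, and the associated system fails to terminate exactly when $(\SSS,\RR)$ admits an infinite derivation. So it suffices to build a single quadratic normalisation of class $(4,4)$ together with one infinite rewriting sequence. Observe first that the target class is forced to be exactly $(4,4)$: by Prop~\ref{P:43Terminating} every normalisation of class $(4,3)$ terminates, and the same holds for class $(3,4)$ by the mirror argument (using the right domino rule); hence, once non-termination is established, the example cannot be of class $(4,3)$ or $(3,4)$. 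Neither domino rule may therefore hold, which is precisely the slack to be exploited.

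Rather than defining $\NM$ on all of $\SSS^*$ by hand, I would specify the data at length two and lift it through the dictionary. Concretely, I would write down a finite table for the restriction $\NMbar$ on $\Pow\SSS2$ over a small alphabet, given as a list of rewrite rules $\sss\sep\ttt\to\NMbar(\sss\sep\ttt)$ whose right-hand sides are themselves $\NMbar$-fixed (this makes $\NMbar$ idempotent), all other pairs being fixed. Using the converse half of Lemma~\ref{P:QuadNormRewr}, once the resulting system $(\SSS,\RR)$ is checked to be reduced, normalising and confluent it defines a quadratic normalisation for the monoid it presents, so no separate verification that $\NM$ is well defined is needed. The class is then read off the length-three words only: by Lemma~\ref{P:Class}, being of class $(4,4)$ amounts to the single identity $\NMbar_{\ai4}=\NMbar_{\aii4}$ on $\Pow\SSS3$, which is a finite check on the table.

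The core step is to arrange an infinite, hence cyclic (since $\RR$ preserves length and $\SSS$ is finite), derivation $\ww_0\Rightarrow\cdots\Rightarrow\ww_0$. The interference needed is that rewriting a length-two factor at position~$\ii$ alters the letter shared with position~$\ii\pm1$, turning a previously normal neighbouring factor into a new redex; chaining such steps around a short word (length three already suffices) can return to the starting word. The main obstacle is to reconcile this cycle with the fact that $(\SSS,\RR)$ must remain normalising and confluent: from every vertex of the cycle some derivation must still reach the common normal form, so the loop has to be \emph{escapable}, meaning at least one word of the cycle must carry a second redex whose reduction leaves the loop and converges to the normal form. Guaranteeing escapability, and checking confluence of all the resulting branchings by hand --- note that Newman's lemma is unavailable precisely because the system is not terminating --- while simultaneously keeping $\NMbar$ idempotent and satisfying $\NMbar_{\ai4}=\NMbar_{\aii4}$, is the delicate part of the construction. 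Given such a table, the exhibited loop is the required infinite derivation, and Prop~\ref{P:43Terminating} then certifies a posteriori that the minimal class is genuinely $(4,4)$.
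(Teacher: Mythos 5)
Your proposal reproduces the paper's strategy in outline --- use Lemma~\ref{P:QuadNormRewr} to reduce everything to a finite table for~$\NMbar$, force a length-preserving (hence cyclic) derivation, keep the cycle escapable so that normalisation and confluence survive, and certify the class by the finite identity $\NMbar_{1212}=\NMbar_{2121}$ on~$\Pow\SSS3$ from Lemma~\ref{P:Class} --- but it stops exactly where the proof has to begin. The statement is existential, so a proof must actually exhibit a witness, and you exhibit nothing: no alphabet, no rules, no cycle, and none of the three verifications you yourself flag as the ``delicate part'' (escapability, confluence without Newman's lemma, the class identity) is carried out or even argued to be satisfiable. As written, your text establishes only that \emph{if} such a table exists then the proposition holds, which is a correct reduction together with a list of constraints, not a proof that the constraints can be met simultaneously. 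That simultaneity is the entire mathematical content here: it is a priori conceivable that idempotence of~$\NMbar$, confluence, and the class-$(4,4)$ identity jointly force termination (indeed they do force it one notch lower, at class $(4,3)$, by Prop.~\ref{P:43Terminating}), so the existence claim cannot be waved through.

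For comparison, the paper's proof supplies the missing object explicitly: $\SSS := \{\tta, \ttb, \ttb', \ttb'', \ttc, \ttc', \ttc'', \ttd\}$ with the five rules $\tta\ttb \to \tta\ttb'$, $\ttb'\ttc' \to \ttb\ttc$, $\ttb\ttc' \to \ttb''\ttc''$, $\ttb'\ttc \to \ttb''\ttc''$, $\ttc\ttd \to \ttc'\ttd$. This realises precisely the mechanism you describe: the length-three cycle $\tta\ttb\ttc\ttd \to \tta\ttb'\ttc\ttd \to \tta\ttb'\ttc'\ttd \to \tta\ttb\ttc\ttd$ arises because each step creates a new redex in the overlapping position, and the cycle is escapable since $\tta\ttb'\ttc\ttd$ and $\tta\ttb\ttc'\ttd$ both rewrite (via the two rules with target $\ttb''\ttc''$) to the $\RR$-normal word $\tta\ttb''\ttc''\ttd$, which keeps the system normalising and confluent; one then checks the associated normalisation has class~$(4,4)$. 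Your peripheral observations are sound --- length preservation plus finiteness of~$\SSS$ does force any infinite derivation to be cyclic, and Prop.~\ref{P:43Terminating} together with its mirror does show a posteriori that the minimal class of any such example must be exactly~$(4,4)$ --- but without a concrete table on which to run the checks, the proposal remains a blueprint rather than a proof.
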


\begin{proof}[Proof (sketch)]
Let $\SSS := \{\tta, \ttb, \ttb', \ttb'', \ttc, \ttc', \ttc'', \ttd\}$ and let $\RR$ consist of the five rules 
$$\tta\ttb \to \tta\ttb',\ \ttb'\ttc' \to \ttb\ttc,\ \ttb\ttc' \to \ttb''\ttc'',\ \ttb'\ttc \to \ttb''\ttc'',\ \ttc\ttd \to \ttc'\ttd.$$
Then $(\SSS, \RR)$ is quadratic by definition, and the diagram of Fig.\,\ref{F:Cycle}, 
in which $\tta\ttb''\ttc''\ttd$ is $\RR$-normal, shows that $(\SSS, \RR)$ is not terminating, since it admits the length-$3$ cycle $\tta\ttb\ttc\ttd \to \tta\ttb'\ttc\ttd \to \tta\ttb'\ttc'\ttd \to \tta\ttb\ttc\ttd$. However, one can show (with some care) that $(\SSS, \RR)$ is normalising and confluent, and that the associated normalisation is of class~$(4, 4)$. 
\hfill\end{proof}

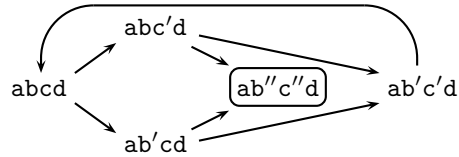
\begin{figure}[htb]
\pagebreak
$$\begin{picture}(55,20)(0,1)
\put(0,8){$\tta\ttb\ttc\ttd$}
\put(15,0.5){$\tta\ttb'\ttc\ttd$}
\put(15,16){$\tta\ttb\ttc'\ttd$}
\put(30,8){$\tta\ttb''\ttc''\ttd$}
\put(50,8){$\tta\ttb'\ttc'\ttd$}
\psframe[linewidth=0.8pt,framearc=.5](29,6.5)(42,11.8)
\psline{->}(8.5,7)(14,3)
\psline{->}(8.5,11)(14,15)
\psline{->}(24,3.5)(29,6)
\psline{->}(24,14.5)(29,12)
\psline{->}(25,2)(49,7)
\psline{->}(25,16)(49,11)
\psline(54,12)(54,13)
\psarc(47,13){7}{0}{90}
\psline(47,20)(11,20)
\psarc(11,13){7}{90}{180}
\psline{->}(4,13)(4,11)
\end{picture}$$
\caption{Non-termination of the rewriting system associated with the class $(4, 4)$ normalisation of Prop.\,\ref{P:Termin44}.}
\label{F:Cycle}
\end{figure}

It can be noted that terminating rewriting systems may also arise when the minimal class is~$(4,4)$: a beautiful example is provided by the Chinese monoid based on a set of size~$3$, see~\cite{CEKNH}.

\end{document}